\newcommand{\e}{\ensuremath{\eta}} %% If we want the eta to be bold,
\newcommand{\te}{\tilde{\e}}
\newcommand{\pd}[2]{\ensuremath{{\partial_{#2} #1}}}
\DeclareMathOperator{\ind}{Ind}
\def\TM+{T^*(\rr_+ \times M)}
\def\rp{\mathbb R_+}
\newcommand{\rr}{\ensuremath{\mathbb{R}}}
\newcommand{\zz}{\ensuremath{\mathbb{Z}}}
\newcommand{\nn}{\ensuremath{\mathbb{N}}}
\newcommand{\ff}{\ensuremath{\mathbb{F}}}
\theoremstyle{plain}
\newtheorem{thm}{Theorem}[section]
\newtheorem{cor}[thm]{Corollary}
\newtheorem{lem}[thm]{Lemma}
\newtheorem{claim}[thm]{Claim}
\newtheorem{prop}[thm]{Proposition}
\theoremstyle{definition}
\newtheorem{defn}[thm]{Definition}
\newtheorem*{ques}{Question}
\theoremstyle{remark}
\newtheorem{rem}[thm]{Remark}
\newtheorem{ex}[thm]{Example}
\numberwithin{equation}{section} 
\newcommand{\dfn}[1]{{\textbf {#1}}}
\newcommand{\rgh}[2]{\ensuremath{{GH}^{#1}(#2)}}
\newcommand{\leg}{\ensuremath{\Lambda}}
\DeclareMathOperator\Int{Int}
\DeclareMathOperator\im{Im}
\begin{document}

\title[Lagrangian Cobordisms via Generating Families]{Lagrangian Cobordisms via Generating Families:  Constructions and Geography} 

\date{\today}

\author[F. Bourgeois]{Fr\'ed\'eric Bourgeois} \address{Universit\'e Paris-Sud, 91405 Orsay, France} \email{bourgeois@math.u-psud.fr} \thanks{FB is partially supported by ERC Starting Grant StG-239781-ContactMath.}

\author[J. Sabloff]{Joshua M. Sabloff} \address{Haverford College,
Haverford, PA 19041} \email{jsabloff@haverford.edu} \thanks{JS is
partially supported by NSF grants DMS-0909273 and DMS-1406093.}

\author[L. Traynor]{Lisa Traynor} \address{Bryn Mawr College, Bryn
Mawr, PA 19010} \email{ltraynor@brynmawr.edu} \thanks{LT is
partially supported by NSF grant DMS-0909021.}

\begin{abstract} 
Embedded Lagrangian cobordisms  between Legendrian submanifolds 
 are produced from isotopy, spinning, and handle attachment constructions that employ the technique
 of generating families.  Moreover, 
any Legendrian with a generating family has an immersed Lagrangian filling with a compatible generating
family.  These constructions are applied in several directions, in particular to a non-classical geography question: any graded group satisfying a duality condition can be realized as the generating family homology of a connected
Legendrian submanifold in $\rr^{2n+1}$ or in the $1$-jet space of any compact $n$-manifold
with $n \geq 2$.
\end{abstract}

\maketitle

% ****************************************

% ********************
\section{Introduction}
\label{sec:intro}

Lagrangian cobordisms between Legendrian submanifolds have recently enjoyed increasing interest, partly due to their centrality in the Symplectic Field Theory  \cite{egh} (see \cite{bee, ekholm:rsft, ekholm:rsft-survey} for recent examples)  and wrapped Fukaya category frameworks (see \cite{abb-schw:cotangent, as:wrapped-floer, fss:wrapped-floer}), and partly due to their connections to smooth topology, including the computation of the slice genus of a knot \cite{chantraine}.

This paper extends the study of Lagrangian cobordisms in two ways:  first, we develop tools to construct Lagrangian cobordisms between Legendrian submanifolds in $1$-jet bundles using  the Morse-theoretic framework of generating families. We call the resulting Lagrangian cobordism $\overline{L}$ between the Legendrians $\leg_-$ and $\leg_+$ a \dfn{gf-compatible} cobordism and write $(\leg_-, f_{-})  \prec_{(\overline{L}, F)} (\leg_+, f_{+})$; see below for a precise definition. An important special case is  a gf-compatible null-cobordism $(\emptyset, f_-) \prec_{(\overline{L},F)} (\leg_+,f_+)$, which we call a \dfn{gf-filling}. Second, we investigate applications of our constructions, with an emphasis on solutions to geography questions for Legendrian submanifolds.

% *****
\subsection{Constructions}

We  study four constructions of Lagrangian cobordisms:
\begin{description}
\item[Immersed Fillings (see Proposition~\ref{prop:immersed-filling})] Every Legendrian submanifold with a generating family has an immersed gf-filling.
\item[Spinning (see Proposition~\ref{prop:front-spin})] An embedded, gf-compatible,  $n$-dimensional cobordism may be ``spun'' to an $(n+1)$-dimensional embedded gf-compatible cobordism.
\item[Legendrian Isotopy (see Proposition~\ref{prop:isotopy})] A Legendrian isotopy starting at a Legendrian with a generating family induces a gf-compatible embedded cobordism.
\item[Handle Attachment / Embedded Surgery (see Theorem~\ref{thm:q-surgery})] In an appropriate set-up, it is possible to create a gf-compatible cobordism by attaching a Lagrangian handle, thereby realizing an embedded Legendrian surgery.  This construction may also be performed locally to create a Lagrangian cobordism even without the presence of a global generating family.
\end{description}

The construction of an immersed filling is novel, but others have studied the spinning, isotopy, and surgery constructions. The fact that Legendrian isotopy induces a Lagrangian cobordism (though without the gf-compatibility) has previously appeared in \cite{chantraine} and \cite[Lemma 4.2.5]{eg:finite-dim}.
 The surgery construction and its relation to generating families is closely related to work of Entov \cite{entov:surgery}, though his work applies to a somewhat different setting. During the preparation of this paper, alternative approaches, without reference to generating families, to spinning \cite{golovko:tb, golovko:higher-spin} and surgery \cite{rizell:surgery, ehk:leg-knot-lagr-cob} were developed.
  
\subsection{Applications}

With these constructions in hand, we turn to their applications.  The first type of application involves finding Lagrangian fillings of Legendrian submanifolds in $J^1\rr^n$ using the surgery and isotopy constructions.  The $3$-dimensional case is particularly interesting, as Chantraine proved that a Lagrangian filling for a Legendrian knot realizes the smooth $4$-ball genus of the underlying smooth knot \cite{chantraine}.  In Section~\ref{sec:3d}, we demonstrate the existence of Lagrangian fillings for several  families of links:  Legendrian links that topologically are
twisted Whitehead doubles or $0$-closures of positive braids (see Propositions~\ref{prop:wh} and \ref{prop:braid}).  The techniques we discuss here also have interesting applications that have appeared elsewhere:
\begin{itemize}
\item Any positive knot has a Legendrian representative with a Lagrangian filling \cite{positivity};
\item There exist Legendrian links with non-homeomorphic fillings, answering a question of Boileau and Fourrier \cite{boileau-fourrier} about the uniqueness of fillings of links by complex curves \cite{polyfillability}; and 
\item For any Legendrian knot, it is possible to construct a Lagrangian cobordism so that $\leg_-$ is a Legendrian unknot; for twist, torus,  and low crossing knots  this can be done so that the cobordism has minimal smooth genus.   \cite{bty}.
\end{itemize}
We note that Ekholm, Honda, and K\'alm\'an's construction of Lagrangian cobordisms \cite{ehk:leg-knot-lagr-cob} (see also \cite{rizell:surgery}) would work equally well for these applications.  In contrast, there are also several applications of the fact that the cobordisms we construct in this paper are gf-compatible:
\begin{itemize}
\item There exist Legendrian spheres in $J^1\rr^n$ that have arbitrarily many different generating family homologies \cite{polyfillability}; and 
\item Denote the space of all Legendrian spheres in $J^1\rr^n$ by $\mathcal{L}^n$.  For any $n > 1$, there exists a Legendrian sphere $\leg^n$ so that $\pi_1(\mathcal{L}^n; \leg^n)$ is nontrivial \cite{ss:pi-k}.
\end{itemize}

The second type of application is to  geography questions for Legendrian submanifolds:  

\begin{quote}
\emph{Which collections of invariants can be realized by a Legendrian submanifold?}  
\end{quote}

We first restrict the geography question to the generating family cohomology invariant, which is defined in Section~\ref{sec:background}, below.  Denote the Poincar\'e polynomial of the generating family cohomology $GH^*(f)$ by $\Gamma_f(t)$.

\begin{ques}[Non-Classical Generating Family Geography]
  Given a Laurent polynomial $P(t) \in \nn[t, t^{-1}]$, is there a connected Legendrian submanifold $\leg \subset J^1M$ (where $M$ is compact or equal to $\rr^n$) with a linear-at-infinity generating family $f$ so that $\Gamma_f(t) = P(t)$?
\end{ques}

Any investigation of geography must begin by understanding obstructions.  The first obstruction to a polynomial being the Poincar\'e polynomial for the generating family homology is the duality of \cite{josh-lisa:obstr}, which
we strengthen below in Theorem~\ref{thm:duality}; see also \cite{high-d-duality, f-r, duality}.  In particular, duality implies the following:

\begin{thm} \label{thm:compatible}
	Given a Legendrian $\leg \subset J^1M$ with a linear-at-infinity generating family $f$, the generating family Poincar\'e polynomial is of the form
	\begin{equation} \label{eqn:poly}
\Gamma_f(t) = (q_0 + q_1t + \dots + q_nt^n) + p(t) + t^{n-1}p(t^{-1}),
\end{equation}
where $p(t) = \sum_{i \in \mathbb Z, i \geq \lfloor \frac{n-1}{2} \rfloor} p_i t^i$,
$q_k + q_{n-k}$ is the $k^{th}$ Betti number of  $\leg$, and $q_n \neq 0$.  
\end{thm}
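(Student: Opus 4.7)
The plan is to extract the polynomial decomposition from the duality long exact sequence provided by Theorem~\ref{thm:duality}. Over a field, this sequence takes the form
\begin{equation*}
\cdots \to H^{k}(\Lambda) \xrightarrow{\alpha^k} GH^k(f) \xrightarrow{\sigma^k} GH^{n-1-k}(f)^\vee \xrightarrow{\delta^k} H^{k+1}(\Lambda) \to \cdots,
\end{equation*}
and (because the sequence is itself self-dual) combines with Poincar\'e duality on the closed $n$-manifold $\Lambda$ to yield rank identifications $\rk(\delta^{k-1}) = \rk(\alpha^{n-k})$ and $\rk(\sigma^k) = \rk(\sigma^{n-1-k})$.

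First I would set $q_k := \rk(\alpha^k)$ and $p_k := \rk(\sigma^k)$ for $k \geq \lfloor (n-1)/2 \rfloor$. Exactness at $GH^k(f)$ together with the self-duality $\rk(\sigma^k) = \rk(\sigma^{n-1-k})$ gives $\dim GH^k(f) = q_k + p_k$ when $k \geq \lfloor (n-1)/2 \rfloor$ and $\dim GH^k(f) = q_k + p_{n-1-k}$ otherwise. Because the target $GH^{n-1-k}(f)^\vee$ vanishes for $k > n-1$, we have $p_k = 0$ in that range, so the coefficient of $t^n$ in $\Gamma_f(t)$ is precisely $q_n$. Summing these degreewise identities assembles into $\Gamma_f(t) = Q(t) + p(t) + t^{n-1} p(t^{-1})$ with $Q(t)$ supported in degrees $0,\dots,n$ and $p(t)$ supported in degrees $\geq \lfloor (n-1)/2 \rfloor$, as claimed.

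For the Betti-number identity, exactness at $H^k(\Lambda)$ yields $\dim H^k(\Lambda) = \rk(\alpha^k) + \rk(\delta^{k-1}) = q_k + q_{n-k}$, where the second equality uses the self-dual identification $\rk(\delta^{k-1}) = \rk(\alpha^{n-k})$. Finally, to verify $q_n \neq 0$, I would note that $H^{n+1}(\Lambda) = 0$ forces $\delta^n = 0$, so $\sigma^n$ is surjective; but its target $GH^{-1}(f)^\vee$ is trivial, so $\alpha^n$ is forced to be surjective onto $GH^n(f)$. The existence of a nonzero ``fundamental class'' lift in $GH^n(f)$---a standard feature of linear-at-infinity generating families, coming from the top critical points of $f$---then ensures $\alpha^n$ is nonzero.

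The main obstacle I anticipate is establishing the self-duality of Theorem~\ref{thm:duality}'s long exact sequence sharply enough to deduce the rank equalities $\rk(\sigma^k) = \rk(\sigma^{n-1-k})$ and $\rk(\delta^{k-1}) = \rk(\alpha^{n-k})$, and then handling the middle degree $k = \lfloor (n-1)/2 \rfloor$---where, especially for $n$ odd, $\sigma^{(n-1)/2}$ is a self-paired map---so that its contribution to $p(t)$ is counted exactly once and the asserted support condition $p_i = 0$ for $i < \lfloor (n-1)/2 \rfloor$ is preserved.
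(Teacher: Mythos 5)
Your overall route is the same as the paper's: work degreewise in the duality long exact sequence of Theorem~\ref{thm:duality}, define $q_k$ as the rank of the map $H^k(\leg) \to GH^k(f)$ and $p_k$ as the rank of the next map out of $GH^k(f)$, use the adjointness statement of Theorem~\ref{thm:duality}(1) to get $b_k = q_k + q_{n-k}$, and derive the symmetry $p_k = p_{n-1-k}$. On the point you flag as your main worry: the identity $\rk(\sigma^k) = \rk(\sigma^{n-1-k})$ does not require any self-duality of the sequence beyond what Theorem~\ref{thm:duality}(1) already gives. It follows from the stated adjointness by the same rank--nullity bookkeeping you use for the Betti numbers: exactness at $GH^k(f)$ gives $d_k = q_k + \dim\ker\sigma_{k+1}$, while $b_{k+1} = q_{k+1}+q_{n-1-k}$ identifies $\rk \sigma_{k+1}$ with $q_{n-1-k}$ and hence $d_{n-1-k} = \dim\ker\sigma_{k+1} + q_{n-1-k}$; subtracting yields $p_k = p_{n-1-k}$. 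This is exactly the paper's argument. (Your concern about the middle degree is legitimate but is shared by the paper's own formulation of the statement, so I set it aside.)

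The genuine gap is in your argument for $q_n \neq 0$. You assert that the target $GH^{-1}(f)^\vee$ of $\sigma^n$ is trivial; it is not. Generating family cohomology in negative degrees occurs routinely --- the $t^{n-1}p(t^{-1})$ summand in the theorem exists precisely to account for such classes, and the paper's later constructions (e.g.\ the Hopf links of Lemma~\ref{lem:hopf} and the computation of $GH^{-1}(f_+)$ in Lemma~\ref{lem:0-surg}) produce nonzero $GH^{k}(f)$ for $k<0$. So you cannot conclude that $\alpha^n$ is surjective, and exactness alone leaves open the possibility that a nonzero class in $GH^n(f)$ lies outside $\im\alpha^n$. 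Moreover, the ``nonzero fundamental class lift'' you invoke as a standard feature is exactly the content of Theorem~\ref{thm:duality}(2), which is the nontrivial strengthening of the duality sequence: the paper proves it in the appendix via a Morse--Bott analysis of gradient trajectories of the difference function emanating from the diagonal $M\times\Delta$, and it cannot be waved through as coming from ``the top critical points of $f$.'' The repair is simple: cite Theorem~\ref{thm:duality}(2) directly, which states that $\delta_n = \alpha^n$ does not vanish, so $q_n = \rk\alpha^n \geq 1$. With that substitution your proof coincides with the paper's.
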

 
  For  connected Legendrians, the polynomial in  (\ref{eqn:poly}) has $q_n = 1$ and $q_0 = 0$. A polynomial of the form of the right hand side of equation (\ref{eqn:poly}) with $q_n \neq 0$ is \dfn{compatible with duality}; if, in addition, the polynomial has $q_n = 1$ and  $q_0 = 0$, then the polynomial
is in  \dfn{connected form}. 
The spinning, isotopy, and embedded surgery constructions, coupled with the Cobordism Exact Sequence of \cite{josh-lisa:obstr} (see below), allow us to prove the following complete answer to the non-classical generating family geography question:

\begin{thm}  \label{thm:geography}
  If the Laurent polynomial $P(t)$ is compatible with duality and is in connected form,  
   then for any $n\geq 2$, there exists a connected Legendrian submanifold of $J^1M$ with a generating family $f$ so that $\Gamma_f(t) = P(t)$.  
\end{thm}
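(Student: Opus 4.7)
My plan is to build up any $P(t)$ compatible with duality and in connected form by successively applying the constructions of Section~1 to a standard Legendrian unknot, tracking the change in the gf Poincar\'e polynomial via the Cobordism Exact Sequence. The base of the construction is the standard Legendrian unknot $n$-sphere $\leg_0 \subset J^1M$, which admits a quadratic-at-infinity generating family with $\Gamma_{f_0}(t) = t^n$; this realizes the minimal polynomial in connected form.

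To build up a general $P(t)$, I would separate it into its \emph{topological} part $T(t) = \sum_{k=0}^n q_k t^k$ and its \emph{duality} part $p(t) + t^{n-1}p(t^{-1})$, and handle each separately. For each unit of $q_k$ needed with $1 \leq k \leq n-1$, I would attach a Lagrangian handle of index $k$ to the current Legendrian using the embedded surgery construction of Theorem~\ref{thm:q-surgery}; the effect on gf cohomology is then controlled by the Cobordism Exact Sequence, which reduces to adding a generator in degree $k$ (and simultaneously realizing the correct change in Betti numbers of $\leg$) once the framing and the extension of the generating family across the handle are chosen correctly. For each unit of $p_i$ with $i \geq \lfloor(n-1)/2\rfloor$, I would perform a local Legendrian isotopy (via Proposition~\ref{prop:isotopy}) that introduces a pair of canceling critical points in the difference function $f(x,\e_1) - f(y,\e_2)$ at the appropriate relative index; by duality, such a pair contributes exactly $t^i + t^{n-1-i}$ to $\Gamma_f(t)$ without changing the smooth type of $\leg$ or the topological part of the polynomial. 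Proposition~\ref{prop:front-spin} would also be available either to transport the construction from $J^1\rr^n$ into $J^1M$ for compact $M$ or to inductively raise the dimension from a realization in $J^1 \rr^{n-1}$.

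The main obstacle is verifying that the effect of each atomic construction on $\Gamma_f(t)$ is exactly as claimed: that the long exact sequence from the embedded surgery splits in the cases we need and introduces a single generator in the expected degree, and that the Reeb-chord-creating isotopies contribute precisely one duality pair without perturbing existing generators or violating the duality constraint. The former is a linear-algebra argument using the degree constraints imposed by Theorem~\ref{thm:compatible}, together with care in choosing attaching spheres so that the connecting map in the surgery exact sequence vanishes in the relevant range. The latter is handled by choosing isotopies that factor through local Reidemeister-type moves whose generating-family deformations admit an explicit Morse-theoretic description. Once these atomic contributions are pinned down, iterating them with the multiplicities $\{q_k\}$ and $\{p_i\}$ read off from $P(t)$ produces a connected Legendrian $\leg \subset J^1M$ with $\Gamma_f(t) = P(t)$, as claimed.
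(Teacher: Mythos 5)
Your overall architecture — start from the unknot with $\Gamma(t)=t^n$, split $P(t)$ into a topological piece $\sum q_k t^k$ and a duality piece $p(t)+t^{n-1}p(t^{-1})$, realize the two pieces by separate atomic moves, and glue via the Cobordism Exact Sequence — matches the paper's strategy at the level of decomposition. For the topological part, your plan (handle attachments controlled by the Cobordism Exact Sequence, with connected sums to accumulate multiplicities) is essentially the paper's route through the Manifold Lemma and Manifold Class Building Block, though the paper modularizes by first producing individual spheres $\leg_a$ with $\Gamma_{f_a}=t^n+t^a$ from a spun unknot plus one $(n-a-1)$-surgery, and only then connect-sums; the paper also needs the $0$-surgery Lemma to certify that connect sums behave additively on $\Gamma$ minus one copy of $t^n$, a bookkeeping step you gesture at but do not isolate.

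The genuine gap is in your mechanism for the duality part. You propose to realize each $p_i$-pair by a Legendrian isotopy (via Proposition~\ref{prop:isotopy}) that ``introduces a pair of canceling critical points'' contributing $t^i + t^{n-1-i}$. This cannot work: the cobordism produced by Proposition~\ref{prop:isotopy} is a concordance, so $H^*(L,\leg_+)=0$ and the Cobordism Exact Sequence forces $\Psi_F$ to be an isomorphism — the polynomial is \emph{unchanged} by Legendrian isotopy. (Equivalently, the set of generating family cohomologies is an isotopy invariant, so no isotopy can manufacture new duality classes; and a genuinely ``canceling'' birth–death pair contributes nothing to homology in the first place, while a pair that does contribute is by definition not canceling.) The paper's actual mechanism is structurally different: it builds $n$-dimensional Hopf links whose two components carry generating families stabilized by quadratics of carefully chosen indices (Lemma~\ref{lem:hopf}), uses two isotopic models of the link and an index/duality argument to pin down $\Gamma=2t^n+t^a+t^{n-1-a}$, then applies a higher-dimensional Reidemeister~I move to set up a $0$-attaching region, and finally performs a gf-compatible $0$-surgery (Lemma~\ref{lem:sphere}) to obtain a \emph{sphere} with $\Gamma=t^n+t^a+t^{n-1-a}$. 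It is the choice of stabilization indices on the two components of the Hopf link — not an isotopy — that injects the duality pair into $\Gamma_f$, and the paper then invokes Theorem~\ref{thm:duality}(2) (the fundamental class) in the borderline cases to nail down the exact dimensions. Until you replace the isotopy step with a mechanism that actually changes the generating family homology — some surgery or stabilization construction — the duality part of your proposal does not go through.
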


The $n=1$ case was proven by Melvin and Shrestha for Legendrian contact homology \cite{melvin-shrestha}, and the work of Fuchs and Rutherford \cite{f-r} implies that Melvin and Shrestha's results also hold for generating family homology.

In fact, the Legendrians constructed to prove the theorem above are all gf-compatibly Lagrangian cobordant to higher-dimensional analogues of the Hopf link, and (hence) are all Lagrangian null-cobordant (though not necessarily gf-compatibly).  Our techniques are suitably functorial that, using the results of \cite{high-d-duality} in place of Theorem~\ref{thm:compatible} to restrict the admissible polynomials and \cite{rizell:lifting, golovko:tb} for Legendrian contact homology versions of the Cobordism Exact Sequence, Theorem~\ref{thm:geography} also holds for Legendrian contact homology.

Our investigations into the non-classical geography question also yield results for the classical Thurston-Bennequin and rotation number invariants in higher dimensions:

\begin{ques}[Classical Fillable Geography]
	Given a pair of integers $(\tau, \rho)$, does there exist a fillable Legendrian $n$-sphere $\leg \subset J^1\rr^n$ whose Thurston-Bennequin and rotation numbers are $\tau$ and $\rho$, respectively?
\end{ques}

It is well-known that if $\leg$ is fillable, then its rotation number vanishes; see the discussion in \cite[Section 2.2]{chantraine}, for example.  Further, for even-dimensional Legendrians, the Thurston-Bennequin number is determined by the Euler characteristic; see \cite{ees:high-d-geometry} or \cite[Appendix A]{murphy:loose}, for example.  Thus, the only interesting question for classical fillable geography involves the Thurston-Bennequin number in odd dimensions, where $tb$ must be odd \cite[Appendix A]{murphy:loose}.  We provide a complete answer to the classical fillable geography question in Subsection 6.3:

\begin{thm} \label{thm:classical-fillable}
	For any odd $n$ and $\tau$, there exists an odd-dimensional fillable Legendrian sphere $\leg \subset J^1\rr^n$  with $tb(\leg) = \tau$.
\end{thm}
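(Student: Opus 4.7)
The plan is to derive Theorem~\ref{thm:classical-fillable} from the generating family geography theorem (Theorem~\ref{thm:geography}), coupled with a Morse-theoretic identity that expresses $tb(\leg)$ in terms of $\Gamma_f(-1)$.

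First, I would establish a formula of the shape
\[
  tb(\leg) \;=\; \pm\, \Gamma_f(-1) \;+\; c\bigl(n, \chi(\leg)\bigr)
\]
valid for gf-fillable Legendrians $\leg \subset J^1 \rr^n$. This formula is obtained by expressing $tb$ as a signed count of Reeb chords (via the writhe of an appropriate Lagrangian projection) and then matching Reeb chords bijectively with critical points of the difference function $\delta_f$ of the generating family, so that the signed Morse count is precisely $\Gamma_f(-1)$ up to a contribution coming from the diagonal. For $n$ odd and $\leg \cong S^n$ one has $\chi(\leg) = 0$, so the correction $c$ depends only on $n$.

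Next, for any prescribed odd integer $\tau$, I would exhibit a Laurent polynomial $P(t)$ in connected form that corresponds to a sphere -- i.e., $q_0 = 0$, $q_n = 1$, and $q_k = 0$ for $0 < k < n$ -- such that $P(-1)$ equals the value dictated by Step~1. Writing $P(t) = t^n + p(t) + t^{n-1} p(t^{-1})$, a direct computation for odd $n$ gives
\[
P(-1) \;=\; -1 + 2\, p(-1).
\]
Since both even and odd indices $i \geq \lfloor (n-1)/2 \rfloor$ are available and the coefficients $p_i \in \nn$ can be chosen freely, $p(-1)$ can be made any prescribed integer (contributions of opposite signs come from the two parities of $i$). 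I would then invoke Theorem~\ref{thm:geography} to produce a connected Legendrian $\leg \subset J^1 \rr^n$ with $\Gamma_f = P$. By construction $\leg$ has the Betti numbers of $S^n$; tracing through the iterative proof of Theorem~\ref{thm:geography} -- which combines spinning, Legendrian isotopy, and handle attachment starting from a fillable Hopf-type base -- shows that $\leg$ is in fact diffeomorphic to $S^n$ (invoking the $h$-cobordism theorem in dimensions $n \geq 5$) and carries a gf-compatible filling, hence is fillable in the required sense.

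The main technical obstacle is Step~1. In higher odd dimensions the Reeb-chord expression of $tb$ depends on sign conventions that must be matched carefully with the Morse indices of $\delta_f$ in the generating family formalism used in this paper; the Maslov index contribution from the diagonal must also be tracked to pin down the constant $c(n,0)$. Once that identification is in place, the polynomial construction and the appeal to Theorem~\ref{thm:geography} are routine combinatorial and functorial steps.
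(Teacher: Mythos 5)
Your strategy is essentially the paper's: express $tb(\leg)$ through $\Gamma_f(-1)$ and then realize a suitable duality-compatible polynomial by a fillable sphere. Two simplifications are available. First, the identity you set up in Step~1 is already recorded as Equation~(\ref{eqn:tb}), $tb(\leg) = (-1)^{\frac{(n-2)(n-1)}{2}}\Gamma_f(-1)$, quoted from \cite{ees:high-d-geometry} and translated to generating family homology via \cite{josh-lisa:obstr}; there is no additive correction term to pin down, so what you flag as the main technical obstacle is in fact a citation. Second, rather than invoking Theorem~\ref{thm:geography} and then arguing a posteriori (via the $h$-cobordism theorem) that the output is a sphere, one should go directly to the Sphere Building Block, Corollary~\ref{cor:sphere-block}: for any $p(t)$ supported in degrees $\geq \lfloor \frac{n-1}{2} \rfloor$ it produces a genuine Legendrian $n$-sphere with $\Gamma_f(t) = t^n + p(t) + t^{n-1}p(t^{-1})$, and Remark~\ref{rem:sphere-filling} records that these spheres are fillable because they arise from attaching Lagrangian handles to fillable Hopf links. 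Your computation $P(-1) = -1 + 2p(-1)$ for odd $n$, together with the observation that $p(-1)$ can be any integer since both parities of exponent are admissible, is exactly how the paper hits every odd $\tau$ (it takes $p(t)=(k+1)t^{n-1}$ and $p(t)=kt^{n}$ for the two signs of $\tau$). One overclaim to retract: the fillings of these spheres are not in general gf-compatible --- the isotopy from the standard Hopf link need not preserve the chosen generating family --- so you should assert only fillability, which is all that Theorem~\ref{thm:classical-fillable} requires.
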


% **********
\subsection*{Plan of the Paper}
\label{ssec:plan}

We set notation and sketch the necessary background for Lagrangian cobordisms and generating families in Section~\ref{sec:background}.  In Section~\ref{sec:basic}, we work in the setting of gf-compatible Legendrians and Lagrangians to construct immersed Lagrangian fillings, spun Legendrian submanifolds and Lagrangian cobordisms, and Lagrangian cobordisms arising from Legendrian isotopy.  The construction of the attachment of a Lagrangian handle is described in Section~\ref{sec:surgery}.  Applications of the constructions begin in Section~\ref{sec:3d} with several examples of gf-fillings for Legendrian knots in $J^1\rr$. Finally, in Section~\ref{sec:geo-bot}, we use constructions developed in this paper, together with the Cobordism Exact Sequence of \cite{josh-lisa:obstr}, to investigate the  geography questions mentioned above.

% **********
\subsection*{Acknowledgements}

The authors thank the American Institute of Mathematics, the Universit\'e de Nantes, the Royal Academies for Science and the Arts of Belgium, and the Banff International Research Station for hosting conferences at which the authors initiated and completed the work discussed in this paper. The authors also thank Matt Hedden for his help in straightening out some references.

% ********************
\section{Background Notions}
\label{sec:background}

In this section, we briefly review the language of generating families for Legendrian submanifolds of $J^1M$ and for Lagrangian cobordisms between them.  See \cite{f-r, lisa-jill, josh-lisa:obstr, lisa:links} for the original definitions and for more details.

% **********
\subsection{Generating Families for Legendrians}
\label{ssec:gf}

The technique of generating families may be used to construct Legendrian submanifolds of the $1$-jet bundle of a smooth manifold $M$.  For future reference, we denote the \dfn{front projection} by $\pi_{xz}: J^1M \to J^0M$ and the \dfn{base projection} by $\pi_x: J^1M \to M$.

Given a smooth manifold $M$, let $f: M^n \times \rr^N \to \rr$ be a smooth function, where $M \times \rr^N$ has coordinates $(x,\e)$. Unless otherwise noted, we assume that $M$ is either compact or $\rr^n$.
Suppose that $\mathbf{0}$ is a regular value of the map $\partial_{\e} f: M \times \rr^N \to \rr^N$.   We define the \dfn{fiber critical set} of $f$ to be the $n$-dimensional submanifold $\Sigma_f = (\partial_\eta f)^{-1}(\mathbf{0})$.  Define immersions $i_f: \Sigma_f \to T^*M$ and $j_f: \Sigma_f \to J^1M$ in local coordinates by:
\begin{align*}
  i_f(x,\e) &= (x, \partial_x f(x,\e)),\\
  j_f(x,\e) &= (x, \partial_x f(x,\e), f(x,\e)).
\end{align*}
The image $L$ of $i_f$ is an immersed Lagrangian submanifold; the image $\leg$ of $j_f$ is an immersed Legendrian submanifold.  We say that $f$ \dfn{generates} $L$ and $\leg$, or that $f$ is a \dfn{generating family (of functions)}.

Not every Legendrian submanifold has a generating family --- see \cite{chv-pushkar}, for example --- but for those that do, the Morse theory of the set of generating families gives rise to interesting non-classical invariants. Since the domain of a generating family $f: M \times \rr^N \to \rr$ is not compact, we need to control the behavior of $f$ at infinity.  We define $f$ to be \dfn{linear-at-infinity} if there exists a non-zero linear function $A: \rr^N \to \rr$ such that $f(x,\e) = A(\e)$ outside a compact subset of $M \times \rr^N$.

The Morse-theoretic invariant that a generating family attaches to a Legendrian submanifold is defined using the \dfn{difference function} $\delta: M \times \rr^N \times \rr^N \to \rr$:
\begin{equation}
  \delta(x, \e, \te) = f(x,\te) - f(x,\e).
\end{equation}
It is not hard to show that there is a one-to-one correspondence between the Reeb chords of $\leg$ (i.e. line segments parallel to the $z$ axis that begin and end on $\leg$) and the critical points of $\delta$ with positive critical value. 
Choose  $\omega > \epsilon> 0$ so that all positive critical values of $\delta_f$ lie between $\omega$ and $\epsilon$, and define the \dfn{relative (resp.\ total) generating family cohomology} of $f$ to be the relative cohomology of the $\omega$ and $\epsilon$ (resp.\ $-\epsilon$) sublevel sets of $\delta$:
\begin{align*}
GH^k(f) &= H^{k+N+1}(\delta^\omega, \delta^\epsilon) \\ \widetilde{GH}\hphantom{}^k(f) &= H^{k+N+1}(\delta^\omega, \delta^{-\epsilon}).
\end{align*}
The \dfn{generating family homology} is defined analogously with the same index shift.  In most other sources, the relative generating family cohomology is simply called the generating family cohomology, and we will adopt this convention throughout this paper except in the appendix.  The long exact sequence of the triple $(\delta^\omega, \delta^\epsilon,\delta^{-\epsilon})$, together with the Thom isomorphism, relates the relative and total generating family cohomologies:
\begin{equation} \label{eq:gh-les}
	\cdots \to GH^k(f) \to \widetilde{GH}\hphantom{}^k(f) \to H^{k+1}(\leg) \to \cdots.
\end{equation}

The set of all generating family cohomology groups taken over all possible generating families for $\leg$ forms an invariant of $\leg$ up to Legendrian isotopy \cite{f-r, lisa:links}.

% **********
\subsection{Generating Families for Lagrangian Cobordisms}
\label{ssec:lagr}

Let $(X,\alpha)$ be a contact manifold and denote its symplectization by $(\rr \times X, d(e^t\alpha))$. A Lagrangian submanifold $\overline{L}$ of the symplectization is a \dfn{Lagrangian cobordism} between the Legendrian submanifolds $\leg_\pm \subset X$ if there exists $t_\pm>0$ such that:
\begin{align*}
  \overline{L} \cap\left( (-\infty, -T] \times X \right) &= (-\infty, t_-] \times \leg_-, \\
  \overline{L} \cap \left( [T, \infty) \times X \right)&= [t_+, \infty) \times \leg_+.
\end{align*}
We denote such a Lagrangian cobordism by $\leg_- \prec_{\overline{L}} \leg_+$, and we denote the compact manifold $\overline{L} \cap [t_-,t_+]$ by $L$.

If the contact manifold is a $1$-jet bundle $J^1M$, then there is a symplectomorphism between its symplectization and $T^*(\rr_+ \times M)$ with its canonical symplectic structure.
As in  \cite[Section 4]{josh-lisa:obstr},  we shift our perspective to $T^*(\rr_+ \times M)$ so that we may use generating families to describe Lagrangian cobordisms.  Lagrangians constructed through generating families will be exact and these will map to exact Lagrangians in the symplectization of $J^1M$.  For ease of notation, we denote a Lagrangian cobordism $\overline{L}$ and its image in $T^*(\rr_+ \times M)$ by the same symbol.

If the functions $f_\pm: M \times \rr^N \to \rr$ and $F: \rr_+ \times M \times \rr^N \to \rr$ satisfy the following relation for some $t_-<t_+$:
\begin{equation}
	F(t,x,\e) = \begin{cases} tf_-(x,\eta) & t \leq t_-\\
		tf_+(x,\eta) & t \geq t_+
	\end{cases},
\end{equation}
then we say that the triple $(F,f_-,f_+)$ is \dfn{compatible}.  A \dfn{gf-compatible} Lagrangian cobordism consists of a Lagrangian cobordism $\leg_- \prec_{\overline{L}} \leg_+$ together with a compatible triple of generating families for the three objects involved.  We denote a gf-compatible cobordism by
$$(\leg_-,f_-) \prec_{(\overline{L},F)} (\leg_+,f_+);$$
for a filling, i.e. when $\leg_- = \emptyset$, we will frequently use the shorthand notation $\emptyset \prec_{(\overline{L},F)} (\leg, f)$, with the understanding
 that $f_-$ is a linear function.
 We require the triple of compatible functions to be \dfn{tame} in the sense that $f_\pm$ are linear-at-infinity and $F$ is \dfn{slicewise linear at infinity}, i.e. each $F(t,\cdot,\cdot)$ is equal to a linear function $A_t(\e)$ outside a compact set of $\{t\} \times M \times \rr^N$.  

A key finding of \cite{josh-lisa:obstr} is the following:

\begin{thm}[Cobordism Exact Sequence] \label{thm:cobord-les}  
A gf-compatible Lagrangian cobordism $(\leg_-, f_-) \prec_{(\overline{L}, F)} (\leg_+, f_+)$ induces a linear map 
$\Psi_F: GH^k(f_-) \to GH^k(f_+)$ that fits into the following long exact sequence:
\begin{equation} \label{eq:cobord-les}
  \xymatrix@1{
    \cdots \ar[r] & \rgh{k}{f_-} \ar[r]^{\Psi_F} & \rgh{k}{f_+} \ar[r] & H^{k+1}(L, \leg_+) \ar[r] & \cdots.
  }
\end{equation}
\end{thm}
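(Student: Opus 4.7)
The plan is to derive the sequence from the long exact sequence of a triple of sublevel sets of the difference function of $F$, combined with a Thom-isomorphism identification of the Morse--Bott critical locus with the Lagrangian cobordism $L$.

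First, I would introduce the difference function
\[
\Delta(t,x,\e,\te) = F(t,x,\te) - F(t,x,\e)
\]
on $\rr_+ \times M \times \rr^N \times \rr^N$. The compatibility of $F$ with $f_\pm$ means that $\Delta = t\,\delta_{f_-}$ on the negative end $\{t \le t_-\}$ and $\Delta = t\,\delta_{f_+}$ on the positive end $\{t \ge t_+\}$; slicewise linearity at infinity of $F$ ensures $\Delta$ is well-behaved in the fiber $\e$-directions, exactly as in the definitions of $GH^k(f_\pm)$.

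Next, I would choose levels $0 < \epsilon < \omega$ separating the positive critical values of $\delta_{f_\pm}$. Because each cylindrical end of $F$ is a product up to a linear rescaling in $t$, the sublevel pair $(\Delta^{t_0\omega}, \Delta^{t_0\epsilon})$ over a slice $\{t = t_0\}$ in either end deformation-retracts onto $(\delta_{f_\pm}^\omega, \delta_{f_\pm}^\epsilon)$, which recovers $GH^k(f_\pm)$ after the standard index shift by $N+1$. The map $\Psi_F$ is then the natural map on $H^{*+N+1}$ of the sublevel pair induced by inclusion of a negative-end slice into a positive-end slice, realized through the sublevel sets of $\Delta$ over the full $\rr_+$.

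The third term in the sequence comes from the diagonal. The locus $\{\te = \e\}$ intersected with the fiber-critical set of $F$ is a Morse--Bott critical submanifold of $\Delta$ at level $0$, diffeomorphic to the compact cobordism $L$ with ends $\leg_\pm$ sitting over $\{t = t_\pm\}$. A relative Thom isomorphism applied to this critical manifold, combined with an excision that cancels the negative cylindrical end against the $GH^*(f_-)$ factor, identifies the resulting relative group with $H^{k+1}(L,\leg_+)$; the shift from $*+N+1$ to $*+1$ accounts for the rank of the Morse--Bott normal bundle, and the ``$\leg_+$'' on the target emerges because the chosen triple pairs the absolute cohomology of $L$ against a positive-end slice.

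Feeding these identifications into the long exact sequence of the triple
\[
\Delta^\epsilon|_{\{t\ge t_+\}} \;\subset\; \Delta^\epsilon \cup \Delta^\omega|_{\{t\ge t_+\}} \;\subset\; \Delta^\omega
\]
yields the stated sequence. The main obstacle is the Thom/Morse--Bott identification of the third term: the cutoffs must be arranged so that one gets exactly $H^{k+1}(L,\leg_+)$ and not an absolute group or a different relative pair, and one must check orientability/coherence of the Morse--Bott normal bundle along the whole of $L$. Controlling the non-compact negative end via the tameness hypothesis on $F$ is the other delicate point, though it is parallel in spirit to the handling of the non-compact fiber $\rr^N$ already implicit in the definition of $GH^k(f_\pm)$.
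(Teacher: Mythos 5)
Note that the paper does not prove Theorem~\ref{thm:cobord-les} at all: it quotes it as ``a key finding of \cite{josh-lisa:obstr},'' and the actual proof lives in that reference. So there is no in-house argument to compare against. Your overall skeleton --- the difference function $\Delta$ of $F$, sublevel set pairs, a Thom isomorphism along the diagonal Morse--Bott critical locus, and a long exact sequence of a triple --- is the right circle of ideas and is consistent with the methods used both in \cite{josh-lisa:obstr} and in the duality argument in the appendix.

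The gaps are in the execution, and they are not cosmetic. First, the two cylindrical ends behave very differently and cannot both be treated by ``deformation-retract onto a slice.'' At the negative end one has $\Delta = t\,\delta_{f_-}$ with $t \to 0$, so every positive critical level of $\delta_{f_-}$ is eventually absorbed into $\Delta^\epsilon$; that end contributes nothing to $H^*(\Delta^\omega, \Delta^\epsilon)$ over all of $\rr_+$, and $GH^k(f_-)$ only enters after cutting the domain at a slice $\{t=t_1\}$, $t_1 < t_-$, and working relative to that boundary. The map $\Psi_F$ must then be extracted from this cut-off --- ``inclusion of a negative-end slice into a positive-end slice'' is not a map of spaces at all, so it cannot induce the cohomology map. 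Second, the triple you write, $\Delta^\epsilon|_{\{t\ge t_+\}} \subset \Delta^\epsilon \cup \Delta^\omega|_{\{t\ge t_+\}} \subset \Delta^\omega$, does not visibly compute the stated groups: its middle relative group is $H^*(\Delta^\omega, \Delta^\epsilon|_{\{t\ge t_+\}})$, relative to only the portion of $\Delta^\epsilon$ above $t_+$, and there is no evident retraction producing $GH^k(f_+)$ from it. Third, the diagonal fiber-critical locus of $\Delta$ is the non-compact $\overline{L}$, not the compact cobordism $L$, and getting $H^{k+1}(L,\leg_+)$ --- rather than an absolute group or a different relative pair --- requires precisely the excision-and-retraction argument that you flag as delicate but do not carry out. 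These three identifications are the substance of the proof, and the proposal as written gestures at them without establishing any.
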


% ********************
\section{Basic Constructions}
\label{sec:basic}

In this section, we discuss three global constructions of gf-compatible Lagrangian cobordisms.  The first is an explicit proof of the Gromov-Lees theorem in this setting, namely that a Legendrian submanifold with a linear-at-infinity generating family has a gf-compatible, immersed Lagrangian filling.  The second is a translation of Ekholm-Etnyre-Sullivan's spinning construction \cite{ees:high-d-geometry} to the generating family setting, together with a generalization to properly embedded Lagrangian submanifolds in half-spaces; see also \cite{golovko:higher-spin}.  The final construction is a translation to the generating family setting of Chantraine's proof that Legendrian isotopy induces a Lagrangian cobordism \cite{chantraine}; see also \cite{eg:finite-dim} and \cite{golovko:tb}.  

% **********
\subsection{Immersed Lagrangian fillings}
\label{ssec:immersed}

If a Legendrian knot $\leg \subset \rr^3$ has a generating family, then $r(\leg) = 0$ (see \cite{f-r}, for example) and thus has an immersed Lagrangian filling by work of Chantraine \cite[Remark 4.2]{chantraine}. We strengthen this statement by
extending it to Legendrian submanifolds in arbitrary dimensions and by showing that the filling can be constructed via a compatible generating family.  The idea is to smoothly deform a generating family to a linear function.  
  
\begin{prop}\label{prop:immersed-filling}  
  If $\leg \subset J^1M$ is a Legendrian submanifold with a linear-at-infinity generating family $f$, then there exists an \emph{immersed} $gf$-compatible cobordism $\emptyset \prec_{(\overline{L},F)} (\leg,f)$. 
  \end{prop}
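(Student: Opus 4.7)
The plan is to construct $\overline{L}$ as the immersed Lagrangian generated by a family $F$ on $\rp\times M\times\rr^N$ that smoothly interpolates, as $t$ increases from $0$ to $+\infty$, between the linear asymptote of $f$ and $f$ itself. Since $f$ is linear at infinity, write
\[
f(x,\e) = A(\e) + g(x,\e),
\]
where $A$ is the prescribed non-zero linear function and $g$ is smooth with compact support $K\subset M\times\rr^N$. The key initial observation is that $A$, viewed as a generating family on $M\times\rr^N$, has $\pd{A}{\e}$ equal to a non-zero constant vector and hence no fiber critical points; it therefore generates $\emptyset$, which is exactly the $f_-$ we need for a filling.

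Fix $0<t_-<t_+$ and a smooth nondecreasing cutoff $\chi\colon\rp\to[0,1]$ with $\chi\equiv 0$ on $(0,t_-]$ and $\chi\equiv 1$ on $[t_+,\infty)$, and set
\[
F(t,x,\e) \;=\; t\bigl(A(\e) + \chi(t)\, g(x,\e)\bigr).
\]
By construction $F(t,x,\e) = tA(\e)$ for $t\le t_-$ and $F(t,x,\e) = tf(x,\e)$ for $t\ge t_+$, so the triple $(F,A,f)$ is compatible, and $F$ is slicewise linear at infinity thanks to the compact support of $g$.

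The core step is to verify that $F$ is itself a generating family, i.e.\ that $\mathbf{0}$ is a regular value of $\pd{F}{\e}$ on $\rp\times M\times\rr^N$. A direct calculation gives $\pd{F}{\e} = t\bigl(\pd{A}{\e} + \chi(t)\,\pd{g}{\e}\bigr)$, so there are no fiber critical points for $t\le t_-$, and for $t\ge t_+$ the fiber critical set is the cone $[t_+,\infty)\times\Sigma_f$, which is regular by the hypothesis on $f$. In the transition region the critical set is cut out by $\pd{A}{\e} + \chi(t)\,\pd{g}{\e}(x,\e) = 0$; the naive interpolation can pass through birth-death bifurcations and fail to be regular there. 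I expect this regularity verification to be the main technical obstacle, and I would resolve it by a $C^\infty$-small generic perturbation of $F$ supported in $(t_-,t_+)\times K$ that preserves both cylindrical ends, appealing to the parametric transversality theorem applied to a sufficiently rich finite-dimensional family of such perturbations.

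Once $F$ is known to be a genuine generating family, the generated $\overline{L}$ is automatically an immersed exact Lagrangian in $T^*(\rp\times M)$; it is empty over $\{t\le t_-\}$ and coincides with the conic Lagrangian $\rp\cdot\leg$ over $\{t\ge t_+\}$, so it realizes the desired immersed gf-compatible filling $\emptyset \prec_{(\overline{L}, F)} (\leg, f)$. Embeddedness is neither claimed nor expected in general, since a closed Legendrian typically cannot bound an embedded exact Lagrangian filling whose front collapses to the zero section, so self-intersections are intrinsic to this construction.
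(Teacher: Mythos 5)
Your construction of $F$ is, after unwinding, identical to the paper's initial attempt: since $f = A + g$ with $g$ compactly supported, your $F(t,x,\e) = t\bigl(A(\e) + \chi(t)\,g(x,\e)\bigr)$ is exactly $t\bigl(\chi(t) f(x,\e) + (1-\chi(t))A(\e)\bigr)$, and you correctly observe that $A$ alone generates $\emptyset$. You also correctly identify the main technical obstacle: nothing in this naive interpolation forces $\mathbf{0}$ to be a regular value of $\partial_\e F$ in the transition region, so $F$ need not be a generating family.

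Where your proposal and the paper diverge is in how this regularity is restored, and your treatment of this step is only a sketch. Appealing to parametric transversality for ``a sufficiently rich finite-dimensional family of perturbations supported in $(t_-,t_+)\times K$'' does not yet constitute a proof: you would need to (i) exhibit the family, (ii) verify that the universal evaluation map $(\xi,t,x,\e)\mapsto \partial_\e F_\xi(t,x,\e)$ is a submersion along $\{\Phi=\mathbf{0}\}$ so that Sard applies, and (iii) check that the perturbation spoils neither the tameness of $F$ (slicewise linearity at infinity with a \emph{nonzero} linear part) nor the compatibility of the triple near $t=t_+$, where $\Sigma_F$ limits onto $\{t_+\}\times\Sigma_f$. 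The paper resolves all of this with a concrete choice: subtract a path of linear functions $\varepsilon(t)\cdot\e$, with $\varepsilon(t)$ constant and equal to a Sard-generic regular value $\varepsilon_G$ of $\partial_\e G$ for small $t$, decaying to $0$ for $t\ge t_+$, and constrained so that $\varepsilon(t)\cdot\e \not\equiv tA(\e)$. This linear family is precisely the kind of finite-dimensional perturbation family you invoke, and it handles $t\ge 2$ without any further genericity argument by a convexity and scaling trick: the regular values of $\partial_\e f$ form an open set containing a convex ball $U\ni\mathbf{0}$, and for $t\ge 2$ one has $\partial_\e F = 0 \iff \partial_\e f = \varepsilon(t)/t \in U$, which is automatically a regular value. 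Note also that the paper's perturbation is \emph{not} compactly supported in $\e$ — your insistence on support inside $K$ is an extra constraint you would have to reconcile with (ii), since you still need the $\e$-derivatives of your perturbations at points of $\Sigma_F$ to span $\rr^N$ after the cutoff. In short: same construction, same identified difficulty, but your perturbation step is a plausible plan rather than an argument; the paper's linear-shift device is exactly how to make it rigorous.
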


\begin{proof} Let $f: M \times \rr^N \to \rr$ be a linear-at-infinity generating family for $\leg$; assume that $f$ agrees with
the nonzero linear function $A(\e)$ outside a compact set.  To show the existence of a gf-compatible immersed filling, it suffices to construct  $F: \rp \times M \times \rr^N \to \rr$ so that 
  \begin{enumerate}  
   \item  For all $t$ outside a compact set of $\{t\} \times M \times \rr^N$, $F(t,x,\e)$ is  a nonzero linear function $B_t(\e)$ ;
  \item There exists a $t_-$ so that, for $t \leq t_-$,  $F(t, x, \e) = B_{t}(\e)$;
  \item There exists a $t_+$ so that, for $t \geq t_+$,   $F(t, x, \e) = t f(x, \e)$; and
  \item $\mathbf{0} \in \rr^N$ is a regular value of $\pd{F}{\eta}$.
  \end{enumerate}
  
To begin the process, choose a smooth, increasing function $\sigma: \rp \to \rr$ that is $0$ on $(0,1]$ and $1$ on $[2,\infty)$.     Define a function $G: \rp \times M \times \rr^N \to \rr$ by:
\begin{equation*}
  G(t,x,\e) = t\cdot \big( \sigma(t) f(x,\e) + (1-\sigma(t)) A(\e) \big).
\end{equation*}  
Then, for fixed $t_- < 1$ and $t_+ > 2$,  $G$ satisfies  conditions (1--3) desired for $F$.  For later purposes, note that outside a compact
set of $\{t \} \times M \times \rr^N$ and for $t \leq t_-$, $G(t,x,\e) = t A(\e)$.

  We now modify $G$ to guarantee condition that (4) is satisfied.  Since $f$ is a generating family, $\mathbf{0} \in \rr^N$ is a regular value of  $\pd{f}{\eta}$.  Since $f$ is linear-at-infinity, we may find an open, convex ball $U \subset \rr^N$ around $\mathbf{0}$ that consists of  regular values of $\pd{f}{\e}$.  By Sard's Theorem,  there exists $\varepsilon_G \in U$ that is a regular value of $\pd{G}{\e}$ and satisfies for all $t \in [1,2]$, as functions $\varepsilon_G \cdot \e \neq  tA(\e)$.  Choose a smooth path $\varepsilon: \rp \to U$ so that 
  $\varepsilon(t) \equiv \varepsilon_G$ when $t \leq  2$,   $\varepsilon(t) \cdot \e \not\equiv t A(\e)$ when $t \in [2, t_+]$, and
   $\varepsilon(t) \equiv 0$ when $t \geq  t_+$.  Now consider the function
  \begin{equation*}
  \begin{split}
  F(t, x, \e) &= G(t, x, \e) - \varepsilon(t) \cdot \eta \\
  &  = 
  \begin{cases}
  G(t,x, \e) - \varepsilon_G \cdot \e, &t \leq 2 \\
  tf(x,\e) - \varepsilon(t) \cdot \e, & t \in [2, t_+] \\
  tf(x, \e), & t \geq  t_+.
  \end{cases}
  \end{split}
  \end{equation*}

  Conditions (1--3) are still satisfied by $F$.  Further, since ${\varepsilon_G}$ is a  regular value of $\pd{G}{\e}$, it follows that $\mathbf{0}$ is a  regular value of $\pd{F}{\eta}$ when $t \leq 2$.  It remains to show that $\mathbf{0}$ is a regular value of $\pd{F}{\eta}$ when $t \geq  2$.  When $t \geq 2$, $\pd{F}{\e}(t, x, \e)$ vanishes if and only if $\pd{f}{\e} (x, \e) = \frac{\varepsilon(t)}{t}.$  Since $\varepsilon(t)$ lies in the convex set of regular values $U$ and $t \geq 2$,  $\frac{\varepsilon(t)}{t} \in U$ is a regular value of $\pd{f}{\e}$  for all $t \geq 2$.  Thus when $t \geq 2$,  at a point $(t, x, \e)$ where $\pd{F}{\e} = 0$,  we see that the final $n + N$  columns of the $N \times (1 + n + N)$ matrix $D(\pd{F}{\e})$  form a submatrix of rank $N$, as desired.  
\end{proof}

% **********
\subsection{Spinning Constructions}
\label{ssec:spinning}

Let $H^n$ denote the closed upper half space of $\rr^n$, i.e.\
$$H^n = \left\{ \mathbf{x} \in \rr^n\;:\; x_n \geq 0 \right\}.$$
Ekholm, Etnyre, and Sullivan \cite{ees:high-d-geometry} described a method of producing an $(n+1)$-dimensional Legendrian submanifold of $J^1\rr^{n+1}$ from an $n$-dimensional Legendrian in $J^1H^{n}$ by spinning a front diagram about the $z$ axis.  Ekholm and K\'alm\'an generalized the construction to twist-spun Legendrians \cite{ek:isotopy-tori}.  In this section, we generalize these constructions to certain properly embedded Legendrian submanifolds of $J^1H^{n}$, show how these constructions can be performed using generating families, and discuss spinning Lagrangian cobordisms in the spirit of Golovko \cite{golovko:tb}.

A smooth, properly embedded Legendrian submanifold $\leg$ of $J^1H^n$ is a \dfn{spinnable Legendrian} 
if for all $p \in \partial \leg \subset \partial J^1H^n$, the local parameterization of $\leg$ near $p$,  $\phi: H^n \to J^1(H^n)$, extends  to a smooth
map $\phi: \rr^n \to J^1(\rr^n)$ by
$\phi(x_1, \dots, x_n)  = \phi(x_1, \dots, -x_n)$ when $x_n < 0$.
A spinnable Legendrian $\leg$ gives rise to a \dfn{spun Legendrian} $\leg^s \subset J^1 \rr^{n+1}$ whose front projection is
obtained by  rotating the front projection  of $\leg$ about the subspace $\{x_n = x_{n+1} = 0\}$. More generally, 
a \dfn{spinnable Legendrian loop} consists of a smooth isotopy $\leg_\theta$, 
$\theta \in S^1 = [0, 2\pi]/\sim$, of spinnable Legendrians  so that for all $\theta$, a neighborhood of $\partial \leg_\theta$ does not vary
with respect to $\theta$. 
 A spinnable isotopy $\leg_\theta$, in turn, gives rise to a \dfn{twist-spun Legendrian} $\leg^{ts} \subset J^1\rr^{n+1}$ by following the isotopy during a rotation about $\{x_n = x_{n+1} = 0\}$. %; see \cite{ek:isotopy-tori}.  
 More specifically, if $\phi_\theta: U \to H^n \times \rr $ are smooth, local parameterizations of the fronts  $\leg_\theta$ given by
$$\phi_\theta(q) = \left( x_1^\theta(q), \ldots, x_n^\theta(q), z^\theta(q)\right),$$ 
then a parameterization of  the front of the twist-spun submanifold $\leg^{ts}$ is given by
$$\phi^{ts}(q, \theta) = \left(x_1^\theta(q), \ldots, x_{n-1}^\theta(q), \ x_{n}^\theta(q) \cos \theta, \ x_{n}^\theta(q) \sin \theta, \  z^\theta(q) \right).$$

 We say that  a generating family
  $f: H^n \times \rr^N \to \rr$ is  a \dfn{spinnable generating family}   if $f$ has a smooth extension to $f: \rr^n \times \rr^N \to \rr$ with
  $f(x_1, \dots, x_n, \e) = f(x_1, \dots, -x_n, \e)$, for $x_n < 0$.   A \dfn{spinnable loop of generating families} consists of a smooth path
  of spinnable generating families $f_\theta$, $\theta \in S^1 = [0, 2\pi]/ \sim$, 
  so that in a neighborhood of $\partial H^n \times \rr^N$, $f_\theta$ does not depend on $\theta$.    
  By construction, spinnable (loops of) generating families generate spinnable (loops of) Legendrians.
  A \dfn{spinnable loop of linear-at-infinity
 generating families} consists of a spinnable loop of generating families $f_\theta$ so that outside a compact set of $H^n \times \rr^n$, 
 $f_\theta(x, \eta)$ agrees with a nonzero linear function $A(\eta)$ for all $\theta$.

\begin{prop} \label{prop:front-spin}
  If a Legendrian submanifold $\leg \subset J^1H^n$ has a spinnable
  linear-at-infinity 
  generating family then the spun Legendrian submanifold $\leg^s$ has a  linear-at-infinity 
  generating family.  More generally, if $\leg_\theta$ is generated by a  spinnable loop of  tame linear-at-infinity 
  generating families 
  then the twist-spun Legendrian submanifold $\leg^{ts}$ has a 
  linear-at-infinity 
  generating family.
\end{prop}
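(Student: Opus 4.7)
The plan is to construct a linear-at-infinity generating family $F$ on $\rr^{n+1} \times \rr^N$ from the spinnable data by passing to polar coordinates in the last two base directions. For the pure spin case, spinnability of $f \colon H^n \times \rr^N \to \rr$ means that its even extension in $x_n$ is smooth, so the standard lemma that smooth even functions factor through the square lets us write $f(x', x_n, \e) = g(x', x_n^2, \e)$ for a smooth function $g$ on $\rr^{n-1} \times \rr \times \rr^N$. We then set
\[
F(y', y_n, y_{n+1}, \e) = g(y', y_n^2 + y_{n+1}^2, \e),
\]
which is smooth on all of $\rr^{n+1} \times \rr^N$ and which, away from the spin axis $\{y_n = y_{n+1} = 0\}$, agrees with $f(y', r, \e)$ for $r = \sqrt{y_n^2 + y_{n+1}^2}$.

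For the twist-spin case, we define $F$ away from the axis by $F(y, \e) = f_{\theta(y)}(y', r(y), \e)$, where $(r,\theta)$ are polar coordinates on $(y_n, y_{n+1})$. To extend smoothly across the axis, we exploit the hypothesis that $f_\theta$ is $\theta$-independent on a neighborhood of $\partial H^n \times \rr^N$: combined with spinnability this gives $f_\theta(x', x_n, \e) = h(x', x_n^2, \e)$ for some $\theta$-independent smooth $h$ in a neighborhood of $x_n = 0$, whence $F(y, \e) = h(y', y_n^2 + y_{n+1}^2, \e)$ in a neighborhood of the axis, where it is manifestly smooth.

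It remains to verify the generating family axioms and to identify the image. A chain-rule computation shows that $\partial_\e F = 0$ if and only if $\partial_\e f_\theta = 0$ at the corresponding $(y', r, \e)$, so the fiber critical set of $F$ is exactly the spun image of the fiber critical sets of $f_\theta$. Regularity of $\mathbf{0}$ as a value of $\partial_\e F$ follows from the analogous property of each $f_\theta$ (the axis case is handled by the $\theta$-trivial neighborhood, since both $\partial_{y_n}\partial_\e F$ and $\partial_{y_{n+1}}\partial_\e F$ vanish there and the remaining Jacobian block reduces to that of $f_\theta$ at $x_n = 0$). Using $(r,\theta)$ coordinates one then computes
\[
p_{y_n} = \cos\theta\, \partial_r f_\theta - \tfrac{\sin\theta}{r}\, \partial_\theta f_\theta, \qquad p_{y_{n+1}} = \sin\theta\, \partial_r f_\theta + \tfrac{\cos\theta}{r}\, \partial_\theta f_\theta,
\]
and checks directly that $p_{y_n}\,dy_n + p_{y_{n+1}}\,dy_{n+1} = \partial_r f_\theta\, dr + \partial_\theta f_\theta\, d\theta$, so that the image of $j_F$ traces out exactly the front parameterization $\phi^{ts}$ given above. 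The linear-at-infinity property of $F$ is immediate from the hypothesis that $f_\theta$ equals a single nonzero linear function $A(\e)$ outside a $\theta$-uniform compact set in $H^n \times \rr^N$. The main technical obstacle throughout is establishing the smoothness of $F$ across the spin axis, which requires both the spinnability (the even-in-$x_n$ factorization) and the local $\theta$-triviality near $\partial H^n \times \rr^N$.
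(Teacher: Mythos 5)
Your proof follows essentially the same route as the paper: define the (twist‑)spun generating family by passing to polar coordinates in the last two base variables and checking linearity-at-infinity and regularity of $\mathbf{0}$ directly. The one place you add genuine value is the smoothness across the spin axis $\{y_n = y_{n+1} = 0\}$, where the paper tersely asserts that $\theta$-triviality near $\partial H^n \times \rr^N$ suffices, while you correctly note that the spinnability (evenness in $x_n$) of each $f_\theta$ is also essential and invoke the standard factorization of smooth even functions through the square to make this airtight; the rest (chain-rule identification of the fiber-critical set, regularity at the axis via the vanishing $y_n,y_{n+1}$-columns) is the ``straightforward calculation'' the paper alludes to.
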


\begin{proof}
  The first part of the claim follows from the second via the constant isotopy, so we only prove the twist-spun claim.  If $f_\theta: H^n \times \rr^N \to \rr$ 
  is a spinnable loop of linear-at-infinity generating families for  $\leg_\theta$, then we claim that the function $f^{ts}: \rr^{n-1} \times \rr^2 \times \rr^N \to \rr$
  given by
 $$f^{ts}(x_1, \ldots, x_{n-1}, x_n, \theta, \e) = f_{\theta}(x_1, \ldots, x_{n-1}, x_{n}, \e),$$
 where $(x_n, \theta)$ denote polar coordinates on $\rr^2$, 
is linear-at-infinity and generates $\leg^{ts}$.

Since $f_\theta|_{\partial H^n \times \rr^N}$ does not depend on $\theta$, $f_\theta(x_1, \dots,x_{n-1}, 0, \eta)$ is independent of
$\theta$ and so  $f^{ts}$ is well-defined.  The condition that all $f_\theta$ coincide on a neighborhood of
$\partial H^n \times \rr^N$ guarantee that $f^{ts}$ is smooth.  Since each $f_\theta$ agrees with the same
linear function $A(\e)$ outside a compact set, $f^{ts}$ is linear-at-infinity.   A straightforward calculation shows that since $\mathbf{0}$ is
a regular value of $\pd{f_\theta}{\e}$ for all $\theta$, $\mathbf{0}$ is a regular value of $\pd{f^{ts}}{\e}$, and thus $f^{ts}$ is a generating family for $\leg^{ts}$.     
\end{proof}

\begin{ex}
  One way to create higher-dimensional analogues of the Legendrian unknot is to cut the unknot in half along the $z$ axis and spin the result; see Figure~\ref{fig:front-spin}.  Iterating this construction yields ``flying saucers'' with generating families in all dimensions.
\end{ex}

\begin{figure}
  \centerline{\includegraphics{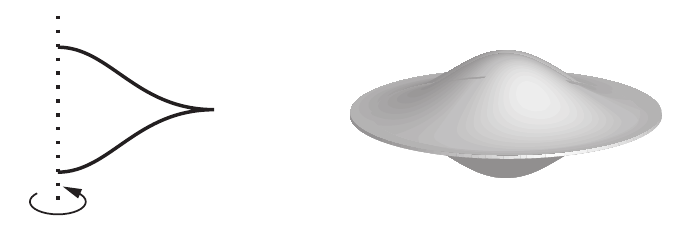}}
  \caption{Spinning half of a Legendrian unknot yields a ``flying saucer''. }
  \label{fig:front-spin}
\end{figure}

\begin{rem}
  In \cite{kalman:mono1}, K\'alm\'an constructed a loop of Legendrian trefoil knots --- i.e. a spinnable isotopy --- that is not contractible in the space of Legendrian trefoils; this isotopy was viewed in the context of spinning in \cite{ek:isotopy-tori}.  Even though the trefoil has a generating family, and hence the isotopy gives rise to a path of generating families for the trefoils along the isotopy, the fact that the twist-spun Legendrian torus does not have an augmentation
  shows that this path of generating families is not a loop.
\end{rem}

The spinning construction for generating families may be extended to Lagrangian cobordisms; see \cite{golovko:tb} for a relative of this construction that does not consider generating families. 
 As above, a \dfn{spinnable Lagrangian cobordism} is a properly embedded Lagrangian cobordism $\overline{L} \subset T^*(\rr_+ \times H^n)$ so that
 for all $p \in \partial \overline{L} \subset \partial T^*(\rr_+ \times H^n)$, the local parameterization of $\overline{L}$ near $p$,  $\phi: \rr \times H^n \to T^*(\rr_+ \times H^n))$, extends  to a smooth
map $\phi: \rr \times \rr^n \to   T^*(\rr_+ \times \rr^n)  $ by
$\phi(t, x_1, \dots, x_n)  = \phi(t, x_1, \dots, -x_n)$.
We say that  a generating family
  $F: \rp \times H^n \times \rr^N \to \rr$ is  a \dfn{spinnable generating family}   if $F$ has a smooth extension to $F: \rp \times \rr^n \times \rr^N \to \rr$ by
  $F(t, x_1, \dots, x_n, \e) = F(t, x_1, \dots, -x_n, \e)$.  By construction, a spinnable generating family generates a spinnable Lagrangian cobordism.

 The construction of a spun Lagrangian cobordism with a spun generating family then follows exactly the same steps.

\begin{prop} \label{prop:cob-spin}   
  Given a spinnable Lagrangian cobordism $\overline{L}$ with 
  tame, spinnable generating family $F$ of the form $(\leg_-, f_-) \prec_{(\overline{L}, F)} (\leg_+, f_+)$ in $T^*(\rr_+ \times H^n)$, the spun Lagrangian cobordism $\leg_-^s \prec_{\overline{L}^s} \leg_+^s$ has a compatible, tame
  generating family.
\end{prop}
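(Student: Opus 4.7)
The plan is to mimic the construction in Proposition~\ref{prop:front-spin} with the symplectization parameter $t \in \rp$ carried along as an inert extra variable. Given a tame, spinnable generating family $F: \rp \times H^n \times \rr^N \to \rr$ for $\overline{L}$, define
$$F^s(t, x_1, \ldots, x_{n-1}, x_n, \theta, \e) = F(t, x_1, \ldots, x_{n-1}, x_n, \e),$$
where $(x_n,\theta)$ denote polar coordinates on the $\rr^2$-factor in $\rr^{n+1}$ on which the rotation acts. I would then check in turn that $F^s$ is well-defined and smooth, that it is tame, that it generates $\overline{L}^s$, and that it is compatible with the spun generating families $f_\pm^s$ produced by Proposition~\ref{prop:front-spin} at the ends.

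For the first step, well-definedness and smoothness follow from the spinnability hypothesis exactly as in the proof of Proposition~\ref{prop:front-spin}: the identity $F(t, x_1,\ldots,x_n,\e) = F(t, x_1,\ldots,-x_n,\e)$ ensures that $F(t,x_1,\ldots,0,\e)$ is independent of the sign of $x_n$, so reading $x_n$ as the radial coordinate yields a well-defined function on $\rp \times \rr^{n-1}\times \rr^2 \times \rr^N$, and the smooth extension of $F$ across $\partial H^n$ (which is part of the spinnability hypothesis) makes $F^s$ smooth in Cartesian coordinates on the rotation plane. For tameness, the slicewise linearity of $F$ says that for each $t$ there is a linear $A_t(\e)$ with $F(t,\cdot,\cdot) = A_t(\e)$ outside a compact set of $\{t\}\times H^n\times \rr^N$; since $A_t$ does not depend on the base point, this property is preserved by the polar substitution, giving slicewise linearity of $F^s$.

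Next I would verify that $F^s$ generates $\overline{L}^s$ and is compatible with $f_\pm^s$. A direct calculation, already carried out in Proposition~\ref{prop:front-spin}, shows that $\pd{F^s}{\e}$ and $\pd{F}{\e}$ agree after the polar substitution, so $\mathbf{0}$ remains a regular value of $\pd{F^s}{\e}$ and the fiber critical set of $F^s$ is obtained from that of $F$ by spinning; the image of $j_{F^s}$ is then identified with the spin of the image of $j_F$, which is $\overline{L}^s$. At the cylindrical ends, the compatibility $F(t,x,\e) = t f_\pm(x,\e)$ for $t\leq t_-$ (respectively $t\geq t_+$) immediately gives $F^s(t,x,\theta,\e) = t f_\pm^s(x,\theta,\e)$, where $f_\pm^s$ is the spun family of Proposition~\ref{prop:front-spin}. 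Thus $(F^s, f_-^s, f_+^s)$ is a compatible triple.

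The only genuinely delicate point --- and hence the main obstacle, though it is mild --- is smoothness of $F^s$ along the rotation axis $\{x_n = x_{n+1} = 0\}$, where polar coordinates degenerate. The spinnability symmetry $F(t,x_1,\ldots,x_n,\e)=F(t,x_1,\ldots,-x_n,\e)$ is precisely what is needed to force $F^s$ to depend only on $x_n^2+x_{n+1}^2$ (and the other variables) near the axis, which makes it smooth in Cartesian coordinates. Once this check is in place, the rest of the argument is a routine reparametrization of the construction in Proposition~\ref{prop:front-spin}, now in the presence of the extra symplectization parameter.
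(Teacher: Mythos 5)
Your proof is correct and takes essentially the same approach as the paper, which simply asserts that the construction "follows exactly the same steps" as Proposition~\ref{prop:front-spin}; you spell out those steps — well-definedness and smoothness at the axis via the evenness of $F$ in $x_n$, slicewise linearity, regularity of $\mathbf{0}$ for $\pd{F^s}{\e}$, and compatibility at the cylindrical ends — with the symplectization variable $t$ carried along inertly, exactly as intended.
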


% **********
\subsection{Legendrian Isotopy}
\label{ssec:isotopy}

In this subsection, we translate the fact that a Legendrian isotopy induces a Lagrangian cobordism \cite{chantraine, eg:finite-dim, golovko:tb} to the generating family setting.  Namely, we prove:

\begin{prop} \label{prop:isotopy}
  Suppose that $\leg_-$ is a Legendrian submanifold of $J^1M$ with a tame
    generating family $f_-$
   and that $\leg_-$ is Legendrian isotopic to $\leg_+$.  Then there exists an embedded gf-compatible Lagrangian cobordism $(\leg_-,f_-) \prec_{(\overline{L},F)} (\leg_+,f_+)$.  Further, if $f_-$ is tame,
   then $F$ is tame. 
\end{prop}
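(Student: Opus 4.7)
The plan is to translate Chantraine's symplectization suspension construction into the generating family setting by building $F(t,x,\e)=tf_{\sigma(t)}(x,\e)$, where $f_s$ is a smooth path of generating families interpolating $f_-$ and $f_+$ and $\sigma\colon\rp\to[0,1]$ is a slow reparametrization of the isotopy parameter. The first step is to upgrade the Legendrian isotopy $\{\leg_s\}_{s\in[0,1]}$ with $\leg_0=\leg_-$ and $\leg_1=\leg_+$ to a smooth path of tame, linear-at-infinity generating families $f_s\colon M\times\rr^N\to\rr$ with $f_0=f_-$ and $\leg_s=j_{f_s}(\Sigma_{f_s})$. This is a standard application of parametric generating family theory: the Legendrian isotopy extends to an ambient contact isotopy of $J^1M$, and after possibly stabilizing $f_-$ by a nondegenerate quadratic form in new fiber coordinates the isotopy lifts to a smooth family of linear-at-infinity generating families. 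Set $f_+:=f_1$.

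Next I would choose $t_-<t_+$ and a smooth, nondecreasing function $\sigma\colon\rp\to[0,1]$ with $\sigma\equiv 0$ on $(0,t_-]$ and $\sigma\equiv 1$ on $[t_+,\infty)$, and define
\[
  F(t,x,\e)\;=\;t\cdot f_{\sigma(t)}(x,\e).
\]
By construction $F$ equals $tf_\pm$ on the cylindrical regions, so the triple $(F,f_-,f_+)$ is compatible. Because $\pd{F}{\e}(t,x,\e)=t\,\pd{f_{\sigma(t)}}{\e}(x,\e)$ and each $f_s$ is a generating family, $\mathbf{0}\in\rr^N$ is a regular value of $\pd{F}{\e}$ on the slab $t>0$; tameness of $F$ follows from the smoothness of the family of linear functions along which the $f_s$ agree outside a compact set. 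Consequently $F$ generates an immersed Lagrangian $\overline L\subset T^*(\rp\times M)$ that is cylindrical over $\leg_\pm$ for $t\leq t_-$ and $t\geq t_+$.

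The main obstacle is to guarantee that $\overline L$ is \emph{embedded} on the slab $[t_-,t_+]\times M$. A direct computation gives
\[
  \pd{F}{t}(t,x,\e)\;=\;f_{\sigma(t)}(x,\e)\;+\;t\sigma'(t)\,\pd{f_s}{s}\Big|_{s=\sigma(t)}(x,\e),
\]
so any self-intersection of $\overline L$ at height $t$ corresponds to distinct fiber-critical points $(t,x,\e_1)\neq(t,x,\e_2)$ whose $j_{f_{\sigma(t)}}$-images project to the same point of $T^*M$ --- that is, a Reeb chord of $\leg_{\sigma(t)}$ of nonzero action $\Delta(\sigma(t)):=f_{\sigma(t)}(x,\e_1)-f_{\sigma(t)}(x,\e_2)$ --- and that additionally satisfy
\[
  \Delta(\sigma(t))\;+\;t\sigma'(t)\Bigl[\pd{f_s}{s}\Big|_{s=\sigma(t)}(x,\e_1)\;-\;\pd{f_s}{s}\Big|_{s=\sigma(t)}(x,\e_2)\Bigr]\;=\;0.
\]
After a generic $C^2$-small perturbation of the isotopy I may assume that the Reeb chords of $\leg_s$ trace out a finite collection of smooth arcs on which $|\Delta(s)|$ is uniformly bounded below and the difference of $s$-derivatives is uniformly bounded above. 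Then by stretching the interval $[t_-,t_+]$ and tapering $\sigma$ so that $t\sigma'(t)$ is uniformly small, the displayed equation has no solutions, and $\overline L$ is embedded. Under the identification of $T^*(\rp\times M)$ with the symplectization of $J^1M$, the cylindrical ends of $\overline L$ exhibit it as the desired gf-compatible embedded Lagrangian cobordism $(\leg_-,f_-)\prec_{(\overline L,F)}(\leg_+,f_+)$.
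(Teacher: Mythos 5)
Your proposal is correct and follows essentially the same route as the paper: persistence of linear-at-infinity generating families along the isotopy, the ansatz $F(t,x,\eta)=t\,f_{\sigma(t)}(x,\eta)$, identification of double points with Reeb chords satisfying an additional $t$-derivative equation (which the paper isolates as its Family of Functions Lemma), and elimination of those double points by a slow reparametrization using the uniform positive lower bound on Reeb chord lengths. The only cosmetic difference is that your lower bound on $|\Delta(s)|$ needs no generic perturbation --- it already follows from the embeddedness of each $\leg_s$ together with compactness of the fiber critical sets over the isotopy.
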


The proof of Proposition~\ref{prop:isotopy} will use the notion of a difference function $\delta(x, \e, \te)$, as introduced in Section~\ref{ssec:gf}. The following lemma is our key technical tool:

\begin{lem}[Family of Functions Construction] \label{lem:emb-family}
  Given a $1$-parameter family of functions $f_t: M \times \rr^N \to \rr$, $t \in \rr_+$, define the function $F: \rr_+ \times M \times \rr^N \to \rr$ by
$$F(t,x,\e) = t\,f_t(x,\e).$$  If
\begin{enumerate}
\item $\mathbf{0}$ is a regular value of $\partial_\e F$, and 
\item for all $(x, \e, \te)$ with $\e \neq \te$ in the fiber critical set of the difference function $\delta_t(x, \e, \te) = f_t(x,\te) -f_t(x, \e)$, we have:
\begin{equation}
\frac{\partial}{\partial x} \delta_t(x, \e, \te) = 0 \implies \delta_t(x, \e, \te) \neq -t \frac{\partial}{\partial t} \delta_t(x, \e, \te),
\end{equation}
\end{enumerate}
then $F$ generates an \emph{embedded} Lagrangian submanifold of $T^*(\rr_+ \times M)$.
\end{lem}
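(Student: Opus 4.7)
The plan is to verify that the two hypotheses together guarantee that the parametrizing map of the Lagrangian generated by $F$ is an injective immersion. Condition (1) supplies the usual fiber-critical regularity, so $\Sigma_F = \{(t,x,\e) : \partial_\e F = 0\}$ is a smooth submanifold and the evaluation
$$i_F: \Sigma_F \to T^*(\rp \times M), \qquad i_F(t,x,\e) = \bigl(t,\, x,\, \partial_t F(t,x,\e),\, \partial_x F(t,x,\e)\bigr),$$
is already a Lagrangian immersion by the standard generating family formalism. All that remains is to verify injectivity of $i_F$, and this is precisely what condition (2) is built to deliver.

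To check injectivity, I would suppose $i_F(t_1,x_1,\e) = i_F(t_2,x_2,\te)$ with both points in $\Sigma_F$. Comparing the first two coordinates immediately forces $t_1 = t_2 =: t$ and $x_1 = x_2 =: x$, so the real task is to rule out the case $\e \neq \te$. Writing $F = t\, f_t$, the fiber-critical conditions at both points read $\partial_\e f_t(x,\e) = 0$ and $\partial_{\te} f_t(x,\te) = 0$, which together say exactly that $(x,\e,\te)$ lies in the fiber critical set of $\delta_t$. The equality of $x$-derivatives, after dividing by the positive scalar $t$, becomes $\partial_x f_t(x,\e) = \partial_x f_t(x,\te)$, i.e.\ $\partial_x \delta_t(x,\e,\te) = 0$. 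Finally, applying the product rule to $\partial_t F = f_t + t\,\partial_t f_t$ and equating values at the two points gives, after subtraction,
$$\delta_t(x,\e,\te) = -t\, \partial_t \delta_t(x,\e,\te).$$
Hypothesis (2) precisely forbids this combination of equalities, producing the desired contradiction and completing the injectivity argument.

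The hardest aspect is not computational but interpretive: one has to notice that because the rescaling by $t > 0$ absorbs the $x$-derivative comparison cleanly, the only nontrivial obstruction to two distinct fiber-critical points over the same $(t,x)$ mapping to the same point of $T^*(\rp\times M)$ is forced to couple the value of $\delta_t$ with its $t$-derivative, which is exactly what the unusual relation $\delta_t + t\,\partial_t \delta_t = 0$ in condition (2) expresses. The only spot requiring any care in the computation is the product rule on $\partial_t F$, which introduces the extra $f_t$ term responsible for this coupling. Properness and slicewise linearity at infinity are not invoked here; they will be supplied externally when this lemma is used to build tame gf-compatible cobordisms, e.g.\ in Proposition~\ref{prop:isotopy}.
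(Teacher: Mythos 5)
Your proof is correct and follows essentially the same route as the paper: the paper states the parametrization $\left(t, x, f_t + t\,\partial_t f_t, \, t\,\partial_x f_t\right)$ of the immersed Lagrangian and asserts that a ``direct calculation'' identifies double points with triples satisfying exactly the three conditions you derive, which hypothesis (2) then excludes. You have simply written out that direct calculation (the product rule on $\partial_t F$ and the division by $t>0$ in the $\partial_x F$ comparison), so there is nothing to add.
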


\begin{proof}  If $\mathbf{0}$ is a regular value of $\pd F{\e}$, then $F$ generates an
  immersed Lagrangian $\overline{L} \subset T^*( \rp \times M)$ given by
  \begin{equation} \label{eq:lagrangian}
    \overline{L} =  \left\{ \left(t, x, f_t(x, \e) + t \pd{f_t}{t} (x, \e), \, t \pd{f_t}{x}(x, \e) \right) : \pd{f_t}{\e}(x, \e) = 0 \right\}.
  \end{equation}
  A direct calculation shows that the double points of $\overline{L}$ are in bijective correspondence with  points $(t, x, \e, \te)$ with $\e \neq \te$ 
  satisfying:
  \begin{enumerate} 
  \item  $(x, \e, \te)$ is in the fiber critical set of $\delta_t$, 
    \item$\frac{\partial}{\partial x} \delta_{f_t}(x, \e, \te)  = 0$,     and
  \item  $\delta_{f_t}(x, \e, \te) = -t \frac{\partial}{\partial t} \delta_{f_t}(x, \e, \te)$. 
    \end{enumerate} 
  By  hypothesis, it is impossible to simultaneously satisfy these three conditions, and thus the Lagrangian generated by $F$ is embedded.
\end{proof}

We are now ready to prove Proposition~\ref{prop:isotopy}.

\begin{proof}[Proof of Proposition~\ref{prop:isotopy}]
  Let $\leg_t$ be a $1$-parameter family of Legendrian submanifolds of $J^1M$ so that 
$\leg_t = \leg_+$ for $t \geq t_+$ and $\leg_t = \leg_-$ for $t \leq t_-$.  By the persistence of linear-at-infinity 
generating families under Legendrian isotopy 
(see, for example, \cite{chv:quasi-fns, lisa-jill}), we obtain a $1$-parameter family of 
linear-at-infinity generating families $f_t: M \times \rr^N \to \rr$ that generate $\leg_t$ for $t \in [t_-,t_+]$.  Extend $f_t$ to a smooth family for all $t \in \rr_+$ so that outside of a compact interval, we have $f_t = f_\pm$.

We will now check that, for an appropriate parametrization of the family $f_t$, $F(t,x,\e) = t\, f_{t}(x,\eta)$ satisfies the hypotheses of Lemma~\ref{lem:emb-family}, and hence generates the desired Lagrangian cobordism.    
First, it is straightforward to verify that since $\mathbf{0}$ is a regular value of $\partial_\e f_t$ for all $t$, $\mathbf{0}$ is also a regular value of $\partial_\e F$.
Second, notice that 
$\frac{\partial}{\partial x} \delta_{t}(x, \e, \te) = 0$  
when $(x,\e)$ and $(x,\te)$ correspond to the endpoints of a Reeb chord of $\leg_t$; the ``length" of the Reeb chord is precisely $\delta_{t}(x,\e,\te)$, 
which may be positive or negative.  Let $h>0$ denote the minimum absolute value of the lengths of Reeb chords of all of the Legendrians in the isotopy $\leg_t$.  It then suffices to show that for  points $(x, \e, \te)$ in the fiber critical set
of $\delta_{t}$, 
 we have
\begin{equation} \label{eq:isotopy} \frac{h}{t} >   | \partial_t \delta_{t} (x, \e, \te)| .
\end{equation}
 Since, for each $t$, the fiber critical set of $\delta_{t}$ is compact, and 
  $\partial_t \delta_{t} = 0$ for $t$ outside of a compact interval,
  it follows that $\partial_t \delta_{t}$ is bounded on the domain of interest.
   Thus, after an orientation-preserving diffeomorphism $\rho$ of $\rr_+$, we may assume that
   $\widetilde F(t, x, \e) = t \, f_{\rho(t)} (x, \e)$ 
   will satisfy Equation (\ref{eq:isotopy}).  
\end{proof}

We end this section with several remarks arising from the proof of Proposition~\ref{prop:isotopy} and its consequences.

\begin{rem}
	Since the Lagrangian cobordism generated in the proof of Proposition~\ref{prop:isotopy} is, in fact, a concordance (i.e.\ is diffeomorphic to $\leg \times \rr$), the Cobordism Exact Sequence in Theorem~\ref{thm:cobord-les} tells us that the cobordism map $\Psi_F$ is an isomorphism.
\end{rem}

\begin{rem}  It is important to point out that if a Lagrangian is constructed with the generating family $F(t, x, \e) = t f_t(x, \e)$, where $f_t$
generates a Legendrian $\leg_t$, the ``slices" of
the Lagrangian  are not, in general, $\leg_t$.  
An examination of
 Equation~(\ref{eq:lagrangian}) shows that if $f_t(x, \e)$ generates the Legendrian $\leg_t$, the corresponding $s$-slice, $s \in \rr$, of the Lagrangian in 
the symplectization $\rr \times J^1M$ generated
by $F(t, x, \e, \te)$  
 will agree with $\leg_t$ if and only if for some neighborhood of $t$, $f_t$ is constant with respect to $t$.  When
 $f_t$ is not constant, the corresponding slice of the Lagrangian differs from $\leg_t$ by a contribution
 of $t \pd{}{t} f_t(x, \e)$ to the $z$-coordinate of $\leg_t$.   
\end{rem}

% ********************
\section{Attaching Lagrangian Handles}
\label{sec:surgery}

In this section, we explain how an embedded $(q-1)$-surgery on a Legendrian $\leg_- \subset J^1M$ with a generating family induces a gf-compatible Lagrangian cobordism given by attaching a $q$-handle.  As noted in the introduction, this construction is closely related to those of Entov \cite{entov:surgery} and Dimitroglou Rizell \cite{rizell:surgery}.

% **********
\subsection{Attaching Regions and Handle Attachment}

We begin by specifying the data necessary to attach a Lagrangian $q$-handle along a Legendrian submanifold $\leg$. We denote the cusps of the front of a Legendrian $\leg$ by $\leg^{\succ}$, and if $\leg$ has a generating family, then the cusps that represent births/deaths between critical points of indices $j$ and $j+1$ are denoted  $\leg^{\succ j}$.  Finally, denote a $k$-dimensional disk of radius $r$ by $D^k(r)$. 

\begin{defn} \label{defn:core}
  For $1 \leq q \leq n$, a smooth embedding $$\sigma: D^q(1+\lambda) \times D^{n-q}(\lambda) \times D^1(\lambda) \to M^n \times \rr,$$
for some small $\lambda$, is a \dfn{$q$-attaching region} for a Lagrangian $q$-handle along the front  of a Legendrian submanifold  $\leg$ if:
  \begin{enumerate}
  \item $\sigma^{-1}(\leg) = \left\{(u,v,w) \;:\; w^2 = (\| u \|^2 - \| v \|^2 -1)^3 \right\}$;
  \item $\sigma^{-1}(\leg^\succ) = \left\{(u,v,0) \;:\; \|u\|^2-\|v\|^2 = 1\right\}$, with the image of this set  called the \dfn{surgery domain}; and 
  \item For each fixed $(u_0,v_0) \in D^q(1+\lambda) \times D^{n-q}(\lambda)$, $\sigma(\{u_0\} \times \{v_0\} \times D^1(\lambda))$ is parallel to the $z$ direction in $J^1M$.
  \end{enumerate}
  
\begin{figure}
	\centerline{\includegraphics{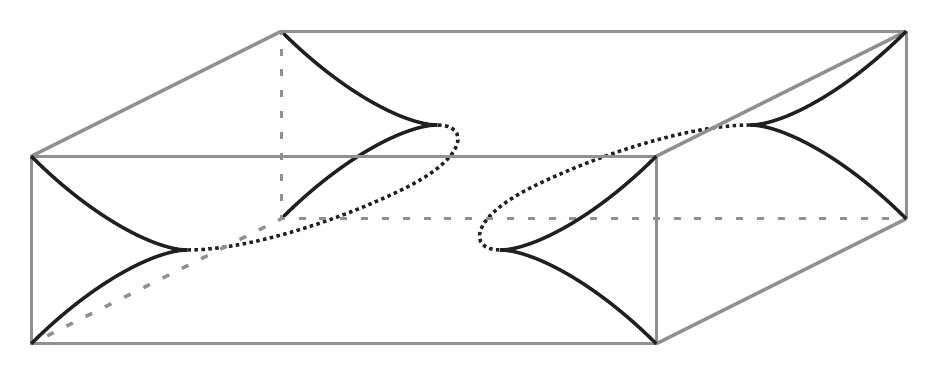}}
	\caption{A schematic picture of a the domain of an attaching region $\sigma: D^q(1+\lambda) \times D^{n-q}(\lambda) \times D^1(\lambda) \to M^n \times \rr$ with the preimage of the Legendrian $\leg$ and the cusps $\leg^\succ$ shown in solid and, respectively, dotted curves. }
	\label{fig:attaching-region}
\end{figure}

  The embedding $\sigma$ is a \dfn{gf-attaching region} if, in addition, property (2) is modified so that the surgery domain lies in  $\leg^{\succ j}$ for some fixed $j \geq 0$.  The \dfn{core disk} of the attaching region is the image of $D^q(1) \times \{0\} \times \{0\}$. 
\end{defn}

See Figure~\ref{fig:attaching-region} for schematic picture of an attaching region. After an isotopy of $\leg$, we may assume that the 
image of the attaching region lies in the vertical slice defined by $c-\lambda \leq z \leq c+\lambda$ and that the core disk lies in the hypersurface defined by $z=c$.

We are now ready to formally state the surgery construction.

\begin{figure}
  \centerline{\includegraphics[width=2.2in]{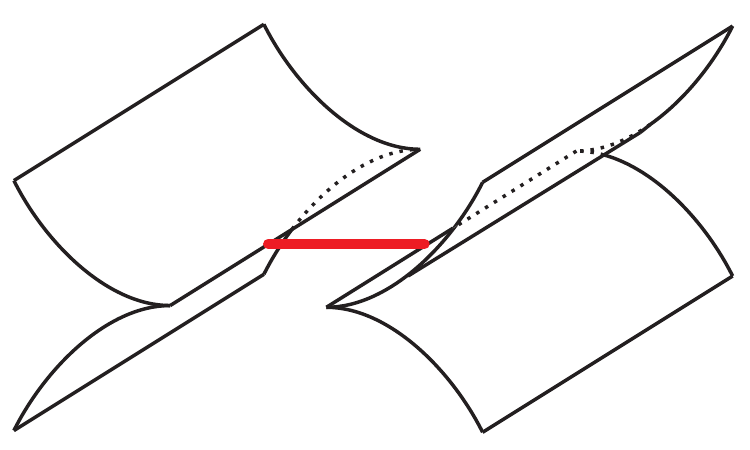} \quad \quad \includegraphics[width=2.2in]{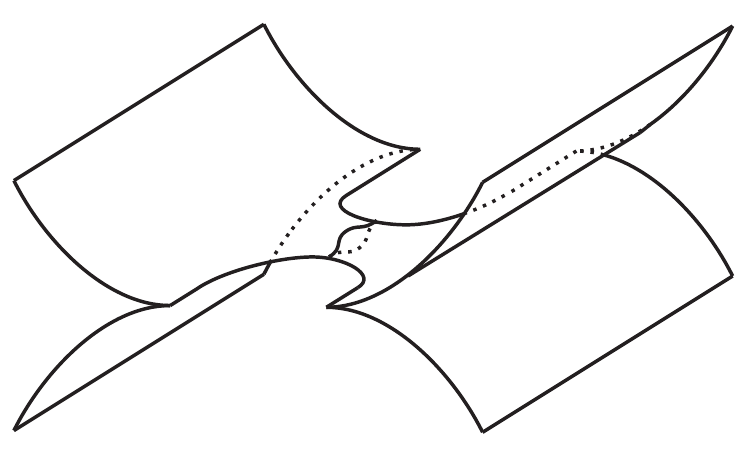}}
  \caption{(a) The core disk for a $0$ surgery and (b) the result of embedded surgery along the core disk for a Legendrian suface in $\rr^5 = J^1\rr^2$.}
  \label{fig:2d-0-surgery}
\end{figure}

\begin{thm} \label{thm:q-surgery} Let $\leg_-$ be a Legendrian submanifold of $J^1M$ with a tame  
generating family $f_-: M^n \times \rr^N \to \rr$.  Given a $q$-attaching region $\sigma$ for $\leg_-$, there exists a smooth $1$-parameter family of functions $f_t: M \times \rr^N \to \rr$ so that $F(t, x, \e) = t f_t(x, \e)$ is tame
and generates an embedded Lagrangian cobordism $(\leg_-, f_-) \prec_{(\overline{L}, F)} (\leg_+, f_+)$ satisfying:
  \begin{enumerate}
  \item the cobordism $\overline L$ has the homotopy type of a cylinder over $\leg_-$ with a $q$-cell attached, and
  \item the Legendrian $\leg_+$ is obtained from $\leg_-$ by an embedded $(q-1)$-surgery along the boundary of the core disk.
  \end{enumerate}
\end{thm}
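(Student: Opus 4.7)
The plan is to construct the family $f_t$ by modifying $f_-$ only inside the image of the attaching region $\sigma$, and then to apply Lemma~\ref{lem:emb-family} to conclude that $F(t,x,\eta) = t f_t(x,\eta)$ generates an embedded Lagrangian cobordism. First, I would bring $f_-$ into a Morse-theoretic normal form on the image of $\sigma$: splitting the fiber variable as $\eta = (\eta_1, \eta')$, with $\eta_1$ carrying the cubic direction arising from the birth-death locus $\leg_-^{\succ j}$ and $\eta'$ carrying a non-degenerate quadratic form $Q_j$ of index $j$, $f_-$ locally takes the form
\begin{equation*}
  f_-(u, v, \eta_1, \eta') = \eta_1^3 + \alpha(u,v)\,\eta_1 + Q_j(\eta'),
\end{equation*}
where the unfolding parameter $\alpha(u,v) = 1 - \|u\|^2 + \|v\|^2$ vanishes exactly on the surgery domain. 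I would then set $f_t = f_- - \phi_t(u,v)\,\eta_1$ inside the attaching region and $f_t = f_-$ outside, with $\phi_t \geq 0$ a smooth, strictly monotone-in-$t$, compactly supported family satisfying $\phi_{t_-} \equiv 0$ and $\phi_{t_+}$ chosen so that $\{\alpha - \phi_{t_+} = 0\}$ is a smooth hypersurface and $\{\alpha - \phi_{t_+} \leq 0\}$ is a connected enlargement of $\{\alpha \leq 0\}$ that absorbs the core disk. Tameness of $F = t f_t$ is immediate, since the modification is compactly supported in $M \times \rr^N$ and $f_-$ is linear at infinity.

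The main technical step is to verify the hypotheses of Lemma~\ref{lem:emb-family}. Condition~(1) holds by construction: $\partial_{\eta_1} f_t = 3\eta_1^2 + (\alpha - \phi_t)$ has $\mathbf{0}$ as a regular value, the $\eta'$-directions contribute the non-degenerate form $Q_j$, and outside the attaching region the regularity is inherited from $f_-$. Condition~(2) is where I expect the main obstacle. For Reeb chords that persist from $\leg_-$ outside the attaching region, $\partial_t \delta_t$ vanishes outside a compact $t$-interval, and the time-reparameterization trick of Proposition~\ref{prop:isotopy} makes $|t\,\partial_t \delta_t|$ smaller than $|\delta_t|$ on any compact time window. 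For the newly-born Reeb chords inside the attaching region, a direct calculation in the local model gives $\delta_t = \tfrac{4}{3\sqrt{3}}(-\alpha_t)^{3/2}$ and $\partial_t \delta_t = \tfrac{2}{\sqrt{3}}(-\alpha_t)^{1/2}\,\partial_t \phi_t$, where $\alpha_t := \alpha - \phi_t$; the strict monotonicity of $\phi_t$ forces both $\delta_t$ and $\partial_t \delta_t$ to be strictly positive, so $\delta_t + t\,\partial_t \delta_t > 0$ and condition~(2) holds automatically at these chords. The degenerate point at the bifurcation time $t_*$ (where the new pair is born coincident) falls outside the domain $\eta_1 \neq \widetilde{\eta}_1$ of the condition, so need not be checked.

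Finally, the Lagrangian $\overline{L}$ is diffeomorphic to the fiber-critical set of $F$, which in the local model is
\begin{equation*}
  \{(t, u, v, \eta_1, 0) : 3\eta_1^2 + (\alpha - \phi_t)(u,v) = 0\},
\end{equation*}
and outside the attaching region equals $\leg_- \times \rp$. As $t$ passes through $t_*$, at which $\phi_t$ first exceeds $\alpha$ at the center of the attaching region, a new $q$-dimensional piece is glued on via a cubic birth whose core projects to the core disk $D^q(1) \times \{0\} \times \{0\}$ and whose attaching sphere is the boundary $(q-1)$-sphere $S^{q-1}(1) \times \{0\} \subset \leg_-$. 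It follows that $\overline{L}$ has the desired homotopy type of a cylinder over $\leg_-$ with a single $q$-cell attached, and that $\leg_+$ is obtained from $\leg_-$ by the corresponding embedded $(q-1)$-surgery along the boundary of the core disk.
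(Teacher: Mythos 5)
Your argument is correct and follows essentially the same route as the paper: deform $f_-$ only over the attaching region so that the fiber-critical set undergoes the Morse-theoretic handle attachment (a birth of a pair of fiber-critical points of indices $j$ and $j+1$ over the core disk), then invoke Lemma~\ref{lem:emb-family}, checking condition (2) separately for chords internal to the handle (where $\delta_t$ and $\partial_t\delta_t$ have the same sign) and for the remaining chords (via the reparameterization trick of Proposition~\ref{prop:isotopy}). The only difference is presentational: you work with an explicit $A_2$ normal form and the linear unfolding $-\phi_t(u,v)\,\eta_1$, whereas the paper first constructs the surgered fiber-critical set $\Sigma$ abstractly and then realizes it slice-by-slice as the fiber-critical set of $F$; the embeddedness verification is the same in both.
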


See Figure~\ref{fig:2d-0-surgery} for an example of performing $0$-surgery on a two-dimensional Legendrian in $\rr^5$.  Note that, as a special case, the theorem allows the attachment of gf-compatible Lagrangian $0$-handle, which results is a null-cobordism for the standard $n$-dimensional flying saucer.

The remainder of this section is devoted to a proof of the surgery construction theorem. The proof begins by transferring the attaching region to the domain of the generating family $f_-$.  We then attach a handle in $\rr_+ \times M \times \rr^N$ to the fiber critical set of $f_-$.  With this scaffolding in place, we construct $F$ itself. The proof ends with a verification  that $F$ is, indeed, a generating family and  the Lagrangian cobordism that it generates is embedded. 

% **********
\subsection{\texorpdfstring{Transferring the Attaching Region to $M \times \rr^N$}{Transferring the Attaching Region to MxRn}}

Condition (3) of Definition~\ref{defn:core} guarantees that we may think of $D^q(1+\lambda) \times D^{n-q}(\lambda)$ as being embedded in $M$.  The region of the domain of $f_-$ involved in the attaching of the handle is then:
$$\tilde{E} = \left(D^q(1+\lambda) \times D^{n-q}(\lambda) \times \rr^N \right) \cap f_-^{-1}(c-\lambda, c+\lambda).$$
We let $\tilde{E}_{(u,v)}$ denote points in $\tilde{E}$ whose first two coordinates are $(u,v)$.

The structure of the fiber critical set  $\Sigma_-$ of $f_-$ in $\tilde{E}$ is quite simple to describe using conditions (1) and (2) of Definition~\ref{defn:core}.  In terms of coordinates $(u,v)$ on $D^q(1+\lambda) \times D^{n-q}(\lambda)$, we have:
\begin{enumerate}
	\item if $\|u\|^2-\|v\|^2<1$, then $\Sigma_- \cap \tilde{E}_{(u,v)} = \emptyset$;
	\item if $\|u\|^2-\|v\|^2=1$, then $\Sigma_- \cap \tilde{E}_{(u,v)}$ consists of a single point;
	\item if $\|u\|^2-\|v\|^2>1$, then $\Sigma_- \cap \tilde{E}_{(u,v)}$ consists of two points.
\end{enumerate} 
Thus, after a fiber-preserving diffeomorphism of $\tilde{E}$ that preserves each $\tilde{E}_{(u,v)}$, we may assume that $\Sigma_- \cap \tilde{E}$ is the set
	$$\{ (u,v,\eta) : \|u\|^2-\|v\|^2-\eta_1^2=1, \  \eta_2 = \cdots = \eta_N = 0\}.$$
For future use, we let $E = \tilde{E} \cap \{\eta_2 = \cdots = \eta_N = 0\}$ and we let $Q(u,v,\eta_1) = -\|u\|^2+\|v\|^2+\eta_1^2$.

% **********
\subsection{\texorpdfstring{Attaching a Handle to $\Sigma_-$}{Attaching a Handle to Sigma-}}

We next build the fiber critical set $\Sigma$ of $F$ by attaching a $q$-handle to $\Sigma_-$.  The set $\Sigma \subset \rr_+ \times M \times \rr^N$  will consist of:
\begin{enumerate}
\item a cylindrical extension $(0,t_-] \times \Sigma_{-}$;
\item a $q$-handle attached to this cylinder for $t \in [t_-, t_+]$, whose boundary is  $\Sigma_- \cup \Sigma_+ = \Sigma \cap \{t_\pm\}$; and
\item a cylindrical extension $[t_+, \infty) \times \Sigma_+$.
\end{enumerate}

\begin{figure}
		\labellist
		\pinlabel $\eta$ [t] at 80 18
		\pinlabel $u$ [l] at 206 54
		\pinlabel $v$ [b] at 108 90
		\endlabellist

  \centerline{\includegraphics[width=4in]{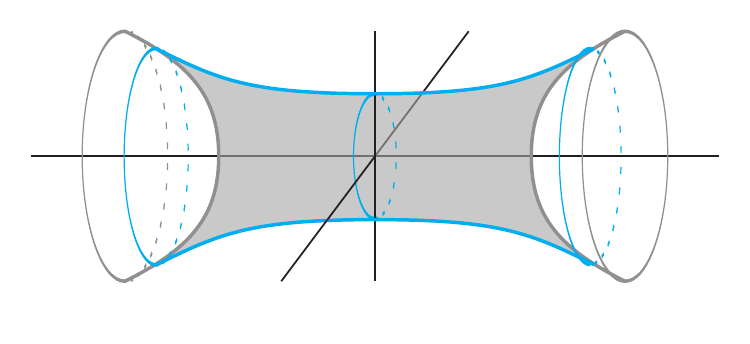}}
  \caption{The handle $H$ in the domain of $f_-$, with $\pi_x(H)$  shown in grey. }
  \label{fig:fcs-handle}
\end{figure}

To construct the $q$-handle, we first form its projection $H$ to $E \subset M \times \rr^N$.  As in the standard construction of a handle (see \cite[\S3]{milnor:morse}), we let $H$ be the deformation retract of the sublevel set $Q^{\leq 1}$ to the region diffeomorphic to $D^{q} \times D^{n-q+1}$ depicted in Figure~\ref{fig:fcs-handle}.  We use the identification of $H$ with $D^{q} \times D^{n-q+1}$ to split the boundary of $H$ into three pieces:  
\begin{enumerate}
\item $S = S^{q}(1+\lambda) \times S^{n-q+1}(\lambda)$; 
\item $\partial_- H = S^{q-1} \times D^{n-q} \setminus S$, i.e.\ the portion of the boundary of $H$ that lies in $\Sigma_-$ (not including $S$); and 
\item $\partial_+ H = D^{q} \times S^{n-q-1} \setminus S$. Note that the closure of $\partial_+ H$ is meant to be tangent (to all orders) to $\Sigma_-$.
\end{enumerate}

To place the handle $H$ into the domain of $F: \rr_+ \times M \times \rr^N \to \rr$, we consider a smooth function $h: H \setminus S \to \rr$ that satisfies:
\begin{enumerate}
\item $h$ has a single critical point at the origin of critical value $t_c \in (t_-,t_+)$;
\item near the origin, $h(u,v,\eta_1) = Q(u,v,\eta_1) + t_c$; 
\item $h^{-1}(t_\pm) = \partial_\pm H$; and
\item over $\partial_\pm H$, the graph of $h$  is tangent (to all orders) to the vertical  cylinder $\partial_\pm H \times \rr$.
\end{enumerate}

We construct $\Sigma$ by taking the union:
\begin{align*}
	\Sigma = &(0,t_-] \times \Sigma_{-} \\
		& \cup \left(\Sigma_- \setminus \partial_- H \right) \times [t_-,t_+] \\
		&\cup \text{graph}(h) \\
		& \cup [t_+, \infty) \times \Sigma_+.
\end{align*}
That $\Sigma$ is smooth follows from the last two conditions in the definition of $h$.

A key feature of the $\Sigma$ constructed above is that for all $x \in \pi_x(\Int H)$, the cardinality of the set 
$$\Sigma_{(t,x)} = \Sigma \cap h^{-1}(\{t\}) \cap \pi_x^{-1}(\{x\}) \cap f^{-1}(c-\lambda, c+\lambda)$$ is an increasing function of $t$, passing from $0$ to $1$ (for at most one value of $t$) to $2$.

The following lemma is obvious from the construction above, as we essentially use  the classical Morse-theoretic picture of \cite{milnor:morse}:

\begin{lem} \label{lem:sigma-surg}
  $\Sigma_+$ is obtained from $\Sigma_-$ by a $(q-1)$-surgery.
\end{lem}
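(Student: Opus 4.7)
The plan is to read the surgery off directly from the explicit construction of $\Sigma$. First I would isolate the $t=t_+$ cross-section: following the four-piece decomposition of $\Sigma$ (lower cylinder, unchanged slab, graph of $h$, upper cylinder), this cross-section satisfies
\[
\Sigma_+ = (\Sigma_- \setminus \partial_- H) \cup_S \partial_+ H,
\]
where the gluing along $S$ is smooth thanks to the tangency conditions imposed on $h$ along $\partial_\pm H$.

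Next I would use the product identification $H \cong D^q \times D^{n-q+1}$ to recognize the standard surgery data. Under this identification, $\partial_- H$ plays the role of the tubular neighborhood $S^{q-1} \times D^{n-q+1}$ of the attaching $(q-1)$-sphere $S^{q-1} \times \{0\} \subset \Sigma_-$, with trivial normal bundle given by the $D^{n-q+1}$-factor, while $\partial_+ H$ plays the role of $D^q \times S^{n-q}$; the two meet along $S \cong S^{q-1} \times S^{n-q}$. Deleting $\partial_- H$ from $\Sigma_-$ and replacing it with $\partial_+ H$ glued via the identity on $S$ is, by definition, a $(q-1)$-surgery on the $n$-manifold $\Sigma_-$.

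I do not expect a serious obstacle here, since the construction of $h$ is specifically tailored so that its only critical point agrees with the Morse model $Q + t_c$, whose index $q$ on the $(n+1)$-dimensional domain produces a $q$-handle in $\Sigma$ and hence a $(q-1)$-surgery on the $n$-dimensional slice $\Sigma_-$. The one technical point worth spelling out is that the deformation retract of $Q^{\leq 1}$ to $D^q \times D^{n-q+1}$ used to define $H$ preserves this standard handle decomposition of $\partial H$ into its $\partial_-$ and $\partial_+$ pieces; this is immediate from the explicit quadratic form.
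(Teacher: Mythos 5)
Your proposal is correct and matches the paper's approach: the paper simply declares the lemma ``obvious from the construction,'' citing the classical Morse-theoretic handle picture of Milnor, and your argument spells out exactly that picture, identifying $\Sigma_+$ as $(\Sigma_-\setminus\partial_-H)\cup_S\partial_+H$ with $\partial_-H\cong S^{q-1}\times D^{n-q+1}$ and $\partial_+H\cong D^q\times S^{n-q}$ glued along $S\cong S^{q-1}\times S^{n-q}$. Your dimension bookkeeping is in fact cleaner than the paper's stated boundary decomposition, so nothing further is needed.
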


% **********
\subsection{\texorpdfstring{Constructing $F$ Using $\Sigma_-$}{Constructing F Using Sigma-}}

The next step in the proof of Theorem~\ref{thm:q-surgery} will be to construct a $1$-parameter family $f_t: M \times \rr^N \to \rr$, with $t \in \rp$, so that $\Sigma$ is the fiber critical set of $F(t, x, \eta) = t f_t(x, \eta)$.  Constructing the family $f_t$ is equivalent to constructing a smooth family of functions $f_{(t,x)}: \rr^N \to \rr$ for $(t,x) \in \rp \times M$ so that the critical points of $f_{(t,x)}$ are precisely the points of $\Sigma_{(t,x)}$.

For $x \not \in \pi_x(H)$, simply let $f_{(t,x)} = (f_-)_x$. Note that in this case, $f_{(t,x)}$ is clearly linear-at-infinity.

Now suppose that $x \in \pi_x(H)$.  As noted above, the cardinality of $\Sigma_{(t,x)}$ increases from $0$ to at most $2$, and contains a single point for at most one value $t_0$ of $t$.  Outside of a neighborhood of $\Sigma_{(t_0,x)}$, let $f_{(t,x)} = (f_-)_x$. If there exists $t_0 \in [t_-,t_+]$ so that $\Sigma_{(t_0,x)}$ consists of a single point $(t_0,x,\eta_x)$, then the construction of $f_{(t,x)}$ proceeds as follows: working one $x$ slice at a time, we modify $(f_-)_x$ in a neighborhood of $(x,\eta_x)$ so that there are no new critical points for $t_- \leq t < t_0$, there is one birth-death critical point for $t=t_0$, and for $t_+ \geq t > t_0$ there is a pair of non-degenerate critical points of indices $j+1$ and $j$ at positions dictated by $\Sigma_{(t,x)}$.  The local nature of this modification shows that $f_{(t,x)}$ has exactly the same behavior at infinity as $f_-$. This finishes the construction of $f_{(t,x)}$, and hence the construction of the generating family $F$.
   
The proof of Theorem~\ref{thm:q-surgery} will be completed by the proofs of the following two claims:

\begin{claim} 
  After a small perturbation, $F$ is a generating family for a Lagrangian cobordism between $\leg_-$ and $\leg_+$.
\end{claim}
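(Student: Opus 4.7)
The plan is to verify three properties of $F$ in turn: that it has the cylindrical-end structure of a cobordism, that it satisfies the generating-family condition (regularity of $\partial_\eta F$), and that it is tame. The small perturbation will enter only in the middle step, to fix a potential failure of transversality of $\partial_\eta F$ near the birth-death parameter values used to build the handle.

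Checking the cylindrical ends should be immediate from the construction of $f_t$. Outside $\pi_x(H)$ we set $f_t = (f_-)_x$ for every $t$, and on the slabs $t \le t_-$ and $t \ge t_+$ the family is constant equal to $f_-$ or $f_+$; on each of these regions $F(t,x,\eta) = t f_\pm(x,\eta)$, so the Lagrangian generated by $F$ restricts there to the cylinders over $\leg_\pm$. Combined with Lemma~\ref{lem:sigma-surg}, this will also identify $\leg_+$ as the embedded $(q-1)$-surgery on $\leg_-$ claimed by the theorem.

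For the generating-family condition, first note that because $t > 0$ the zero set of $\partial_\eta F$ coincides with the fiber critical set $\Sigma$, which has been built as a smooth $(n+1)$-manifold of the expected dimension. Regularity of $\mathbf{0}$ as a value of $\partial_\eta F$ is inherited from $f_\pm$ on the cylindrical regions, and at any nondegenerate critical point of $f_t$ inside the handle region the Hessian block $t\,\partial_\eta^2 f_t$ of the Jacobian of $\partial_\eta F$ is already of full rank $N$. The main obstacle will be the birth-death slice at $t = t_c$: there $\partial_\eta^2 f_{t_c}$ drops rank by one, and one must check that the extra column $t\,\partial_\eta \partial_t f_t$ supplies the missing direction. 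The normal form $h(u,v,\eta_1) = Q(u,v,\eta_1) + t_c$ used to construct the handle is precisely a universal unfolding of a birth-death singularity, so the rank condition holds on the model. To promote this local picture to a global statement after the smooth interpolation used to assemble $f_t$, I would invoke a Sard-type argument: perturb $F$ by $sG$ for a small bump function $G$ supported in the interior of the handle region, away from both the cylindrical ends and the previously located critical points; the set of parameters $s$ for which $\mathbf{0}$ is not a regular value of $\partial_\eta(F + sG)$ has measure zero, so any generic small $s$ yields the required perturbation without disturbing the structure established above.

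Tameness will then be inherited. Both $f_-$ and $f_+$ agree with the same nonzero linear function $A(\eta)$ outside a compact subset of $M \times \rr^N$, the interpolation used to define $f_t$ is localized in $(x,\eta)$ within the handle, and the perturbation $G$ is compactly supported. Consequently $F(t,\cdot,\cdot)$ equals $t A(\eta)$ outside a compact set for each $t$, which is exactly the slicewise linear-at-infinity condition, and the triple $(F, f_-, f_+)$ is tame.
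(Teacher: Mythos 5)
Your proof follows the same route as the paper's, which simply observes that $\Sigma = (\partial_\eta F)^{-1}(\mathbf{0})$ was built as a smooth submanifold and that a slight generic perturbation then guarantees $\mathbf{0}$ is a regular value of $\partial_\eta F$; your elaboration of the cylindrical ends, the birth-death normal form, and tameness is correct and consistent with the construction. One small caution: perturbing by $sG$ for a single fixed bump function $G$ does not by itself give a Sard conclusion (the parametrized map need not be transverse to $\mathbf{0}$ for a poorly chosen $G$), so it is safer to use an $N$-parameter family of perturbations such as $F - \varepsilon\cdot\eta$, exactly as in the proof of Proposition~\ref{prop:immersed-filling}.
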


The claim essentially follows since we constructed $\Sigma = (\partial_\eta F)^{-1}(0)$ to be a submanifold of $\rr_+ \times M \times \rr^N$; 
if necessary, a slight perturbation of $F$ will guarantee that $\mathbf{0}$ is a regular value of $\pd{F}{\e}$.

\begin{claim} \label{claim:embedded}
  The Lagrangian $L$ generated by $F$ is embedded.
\end{claim}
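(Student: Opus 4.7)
The plan is to apply Lemma~\ref{lem:emb-family} to $F$: we must check that whenever $(x,\eta,\tilde\eta)$ lies in the fiber critical set of $\delta_t(x,\eta,\tilde\eta) = f_t(x,\tilde\eta) - f_t(x,\eta)$ with $\eta \neq \tilde\eta$ and $\partial_x \delta_t = 0$, one has $\delta_t \neq -t\,\partial_t\delta_t$. Such triples correspond exactly to Reeb chords of the Legendrian $\leg_t$ generated by $f_t$; along a smoothly varying one-parameter family, writing $d(t) := \delta_t$ for the signed length evaluated on the family, the simultaneous vanishing of the $x$-, $\eta$-, and $\tilde\eta$-partials of $\delta_t$ gives $d'(t) = \partial_t \delta_t$ on the family. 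The obstruction to embeddedness thus takes the compact form $(t\,d(t))' = d(t) + t\,d'(t) = 0$, which I will show cannot hold along any Reeb-chord family of any $\leg_t$.

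I separate Reeb-chord families into two regimes. A \emph{persistent} family has $|d(t)|$ bounded below by some $h > 0$ on the compact $t$-support of the deformation. These include all chords of $\leg_-$ that survive unaltered outside the attaching region $\tilde E$ (where $f_t$ is independent of $t$), together with any chord inside the handle region whose sheet separation remains uniform. For these the argument of Proposition~\ref{prop:isotopy} applies verbatim: replacing $F(t,x,\eta)$ by $\widetilde F(t,x,\eta) = t f_{\rho(t)}(x,\eta)$ for an orientation-preserving diffeomorphism $\rho: \rp \to \rp$ with $\rho'(t)$ sufficiently small makes $|t\rho'(t) d'(\rho(t))| < h \leq |d(\rho(t))|$, and hence $(t\,d)' \neq 0$.

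The remaining \emph{newborn} chords are those emerging at the single birth moment $t_c$ of the handle construction, where two new critical points of indices $j$ and $j+1$ appear from the local birth-death unfolding $f_t(\eta) \sim \tfrac{\eta^3}{3} - (t-t_c)\eta$. A direct computation gives $d(t) = \tfrac{4}{3}(t-t_c)^{3/2}$ for $t > t_c$, so
\[
(t\, d(t))' = (t-t_c)^{1/2}\bigl[\tfrac{4}{3}(t-t_c) + 2t\bigr] = 2\,t_c\,(t-t_c)^{1/2} + O\bigl((t-t_c)^{3/2}\bigr),
\]
which is strictly positive for $t > t_c$ in a neighborhood of $t_c$, since $t_c > 0$. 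Outside that neighborhood the newborn chord becomes persistent and is controlled by the reparametrization argument above. We then choose a single $\rho$ that is the identity on a neighborhood of every birth moment and sufficiently slow elsewhere to tame all persistent families simultaneously; the resulting $\widetilde F$ generates the desired embedded Lagrangian cobordism.

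The principal obstacle is precisely the control near birth, where $d(t) \to 0$ invalidates the naive reparametrization bound $|t d'| < |d|$. The explicit birth-death model overcomes this: the factor $(t-t_c)^{1/2}$ is positive for $t > t_c$, while the bracket is dominated by the positive constant $2t_c$ coming from $t_c \in \rp$, so no reparametrization near birth is needed at all.
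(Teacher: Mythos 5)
Your proof is correct and follows essentially the same two-pronged strategy as the paper's: apply Lemma~\ref{lem:emb-family}, handle the newly born pair of sheets in the handle by a sign argument (both $\delta_t>0$ and $\partial_t\delta_t>0$ near birth, so $\delta_t+t\,\partial_t\delta_t>0$), and handle all chords with length bounded below by the reparametrization argument of Proposition~\ref{prop:isotopy}; your explicit birth--death normal form simply makes concrete what the paper phrases as ``modifying the movement of the critical values of $f_t$.'' One small wrinkle: requiring $\rho$ to be the identity near the birth moment is both unnecessary and mildly counterproductive, since persistent chords occurring at times in that window are then not tamed by the reparametrization; the cleaner fix is to note that your sign argument for the newborn pair is invariant under \emph{any} orientation-preserving reparametrization (as $d>0$, $d'>0$, and $t\rho'(t)>0$ give $d+t\rho'd'>0$), so one may take $\rho$ globally slow.
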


To prove this claim, let $\delta_{t}(x, \e, \te)$ be the difference function of $f_t$.  From Lemma~\ref{lem:emb-family}, we know the Lagrangian generated by $F$ will be embedded as long as for all $(x, \e, \te)$ with $\e \neq \te$ in the fiber-critical set of $\delta_{t}$, we have
  $$
\partial_x \delta_t(x, \e, \te) = 0 \implies \delta_t(x, \e, \te) \neq -t \partial_t \delta_t(x, \e, \te).
$$  

First, consider the case where $\left( \pi_x^{-1}(\pi_x (H) \right) \cap \Sigma_{-} \subset H$; that is, in the front projection, there are no additional portions of $\pi_{xz}(\leg_-)$ above or below the image of the core disk.  In this case, the only potential immersion points arise from pairs of points inside $H$.  By modifying $f_t$, we can guarantee that the only potential immersion points arise from a pair at the ``center'' of the handle: namely, for $(x, \e, \te)$ with $\e \neq \te$ in the fiber-critical set of $\delta_t$,
$$\partial_x \delta_t(x, \e, \te) = 0 \iff t >t_c \text{ and } x=0.$$
If $\e$ and $\te$ are labeled so that $\delta_t(x, \e, \te) > 0$, then by modifying the movement of the critical values of $f_t$, we can guarantee that $\partial_t \delta_t(x, \e, \te) > 0$, and embeddedness of $L$ follows.

Second, consider the case where potential immersion points arise from pairs of points $(t, x, \e)$ and $(t, x, \te)$ where one point is inside $H$ and the other point lies in the portion of $\Sigma$ that is cylindrical over $\Sigma_{-}$.  In this case, by perturbing $f_t$, we can ensure that there is a compact set of potential immersion points where $\partial_x \delta_t(x, \e, \te) = 0$, and that $|\delta_t(x, \e, \te)| \geq h > 0$ on the domain of interest.  Then, using an argument similar to the proof of Proposition~\ref{prop:isotopy},  
by  increasing $t_+$ and reparameterizing $f_t$, we can guarantee that
$ |\delta_t(x, \e, \te)|/t  \geq h/t > | \partial_t \delta_t(x, \e, \te)|$ 
at all the potential immersion points.  This completes the proof of Claim ~\ref{claim:embedded}, and hence the proof of Theorem~\ref{thm:q-surgery}.

 A brief examination of the proof shows that the hypothesis of a global generating family for the Legendrian $\leg_-$ is unnecessary so long as we do not expect either the Lagrangian cobordism $\overline{L}$ or the Legendrian $\leg_+$ to have a generating family.  All we need is a generating family for $\leg_-$ in a neighborhood of the attaching sphere of the core disk, and such a family is easily constructed in a local model.  Thus, we have the following corollary:

\begin{cor} \label{cor:q-surgery}
  Let $\leg_-$ be a Legendrian submanifold of $J^1M$.  Given a $q$-attaching region for $\leg_-$, there exists an embedded exact Lagrangian cobordism $\leg_- \prec_{\overline L} \leg_+$ so that:
  \begin{enumerate}
  \item The cobordism $\overline L$ has the homotopy type of a cylinder over $\leg_-$ with a $q$-cell attached, and
  \item The Legendrian $\leg_+$ is obtained from $\leg_-$ by an embedded $(q-1)$-surgery along the attaching sphere of the core disk.
  \end{enumerate}
\end{cor}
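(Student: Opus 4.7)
The plan is to observe that in the proof of Theorem~\ref{thm:q-surgery} the generating family $f_-$ is only modified inside the compact set $\tilde E$ sitting over the image of the attaching region $\sigma$, so a global generating family for $\leg_-$ is not actually needed; a locally defined generating family in a neighborhood of the attaching region suffices, and such a family is available in a standard local model.

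First I would write down an explicit local generating family for the front of Definition~\ref{defn:core}. The function
\[
f_{\mathrm{loc}}(u,v,\eta_1,\ldots,\eta_N) \;=\; \tfrac{1}{3}\eta_1^3 \;-\; \bigl(\|u\|^2-\|v\|^2-1\bigr)\eta_1 \;+\; Q(\eta_2,\ldots,\eta_N),
\]
with $Q$ a nondegenerate quadratic form, has fiber critical set $\{\eta_1^2=\|u\|^2-\|v\|^2-1,\ \eta_2=\cdots=\eta_N=0\}$ and critical values $\mp\tfrac{2}{3}(\|u\|^2-\|v\|^2-1)^{3/2}$, so it generates exactly the semicubical cusp front of Definition~\ref{defn:core} (up to a harmless rescaling), and $\mathbf{0}$ is plainly a regular value of $\partial_\eta f_{\mathrm{loc}}$. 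Pulling $f_{\mathrm{loc}}$ back through $\sigma$ produces a generating family $f_-^{\mathrm{loc}}$ for $\leg_-$ on an open neighborhood $U\subset M$ of the image of the core disk.

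Next I would run the construction of Theorem~\ref{thm:q-surgery} verbatim with input $f_-^{\mathrm{loc}}$, obtaining a family $f_t^{\mathrm{loc}}$ and generating family $F^{\mathrm{loc}}(t,x,\eta)=t\,f_t^{\mathrm{loc}}(x,\eta)$ on $\rp\times U\times\rr^N$. Because all modifications in that proof are supported inside $\tilde E$, the function $F^{\mathrm{loc}}$ coincides with $t\,f_-^{\mathrm{loc}}$ near the boundary of $U$, and the Lagrangian $\overline L^{\mathrm{loc}}\subset T^*(\rp\times U)$ it generates therefore coincides near $\partial U$ with the vertical cylinder $[t_-,t_+]\times\leg_-$. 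I would then define $\overline L$ by gluing $\overline L^{\mathrm{loc}}$ to the trivial cylinder $[t_-,t_+]\times\leg_-$ over $M\setminus U$ and extending cylindrically at both ends in $\rp$. Embeddedness on each piece is already known --- Claim~\ref{claim:embedded} for the local part and embeddedness of $\leg_-$ for the cylinder --- and the two pieces agree on the overlap, so the glued $\overline L$ is an embedded exact Lagrangian cobordism; exactness is automatic because the Lagrangian is generated by a single-valued function.

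The main technical subtlety is verifying that a neighborhood of the image of $\sigma$ in $J^1M$ is contactomorphic to a neighborhood of the standard local model in $J^1\rr^n$ in a way that respects both the front $\leg_-$ and the vertical direction, so that Theorem~\ref{thm:q-surgery} can be invoked off the shelf. This is forced by the three conditions of Definition~\ref{defn:core}, which rigidify the front, the cusp locus, and the $z$-direction in the image of $\sigma$. With this local identification in hand, conclusions (1) and (2) transfer directly from Theorem~\ref{thm:q-surgery}: the handle-theoretic homotopy type of $\overline L$ is read off from the construction of $\Sigma$, and $\leg_+$ is obtained from $\leg_-$ by a $(q-1)$-surgery by Lemma~\ref{lem:sigma-surg}.
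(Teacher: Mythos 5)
Your proposal matches the paper's own justification, which is precisely the observation that the proof of Theorem~\ref{thm:q-surgery} only uses the generating family in a neighborhood of the attaching region, where a local model (such as your explicit $f_{\mathrm{loc}}$) always exists. The one point to be careful about is that $\leg_-$ may have additional sheets lying over $\pi_x(H)$ that are not generated by $f_-^{\mathrm{loc}}$, so embeddedness of the glued cobordism is not just embeddedness of each piece separately: it still requires the second case of the argument for Claim~\ref{claim:embedded} (reparametrizing $t$ to rule out double points between the handle and the trivial cylinders over those other sheets).
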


\begin{rem}
Note that the surgery construction used in Corollary~\ref{cor:q-surgery} is equivalent to the
ambient surgery construction by Dimitroglou Rizell~\cite{rizell:surgery}. There, the Lagrangian
handle is defined (Section 4.2.2) in the front projection as the union of two graphs 
instead of our generating family description. 
\end{rem}

% ******************
\section{\texorpdfstring{Constructions in Dimension $3$}{Constructions in Dimension 3}}
\label{sec:3d}
 
In the special case of Legendrian links in $\rr^3$ or $J^1S^1$, the handle attachment construction in the previous section reduces to the the existence of a Lagrangian cobordism between Legendrians whose front diagrams are depicted in Figure~\ref{fig:3d-surgery}.  These cobordisms are related to the work of Ekholm, Honda, and K\'alm\'an \cite{ehk:leg-knot-lagr-cob}, but with the added benefit that if the Legendrian knot at the bottom of the cobordism has a linear-at-infinity generating family and the cusps have corresponding indices, then the link at the top also has a generating family and the cobordism has a compatible generating family.  

In this section, we illustrate the possibilities of the construction in two families of examples. As noted in the introduction, deeper applications of these constructions appear in \cite{bty,polyfillability, positivity, ss:pi-k}.

While the description of these examples follows the ``bottom up'' construction specified by the handle attachment of the previous section, we note that these examples were discovered by working ``top down'' from the resulting links, using graded normal rulings to keep track of which pairs of critical points could be canceled (see \cite{chv-pushkar, fuchs:augmentations}, for example).

\begin{figure}
  \centerline{\includegraphics[height=1.5in]{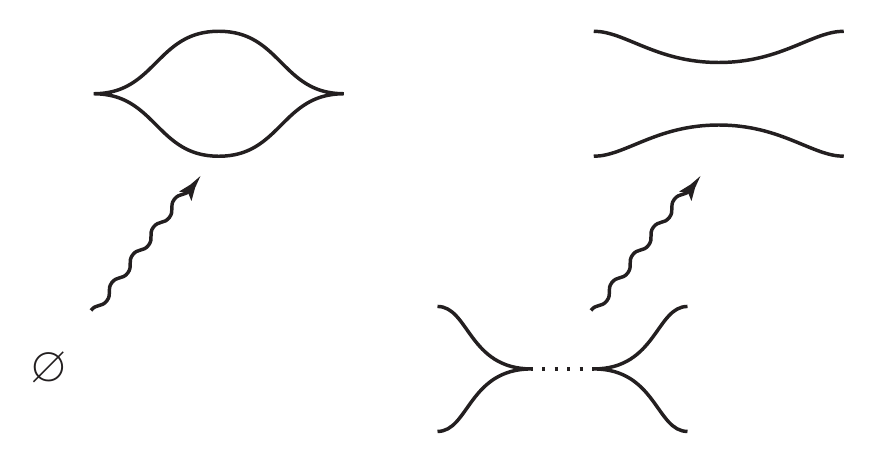}}
  \caption{The modifications to the front of a 1-dimensional Legendrian that arise from attaching a $0$-handle and a $1$-handle.}
  \label{fig:3d-surgery}
\end{figure}

% **********
\subsection{Example: Whitehead Doubles}
\label{ssec:whitehead}

Recall that for any Legendrian knot $\leg \subset \rr^3$, one can form the Legendrian Whitehead double, denoted $Wh_{tb}(\leg)$, as follows.  First form
the $2$-copy of $\leg$, which is a link consisting of $\leg$ and a small push-off of $\leg$ in the $z$ direction; then make this Legendrian link into a Legendrian  knot by replacing a $0$-tangle with a cusped $\infty$-tangle.  The top of Figure~\ref{fig:wh-double} illustrates $Wh_{tb}(\leg)$ where
$\leg$ is the Legendrian unknot with $tb = -3$ and $r = 0$.  Topologically, $Wh_{tb}(\leg)$ is the $tb(\leg)$-twisted, positively-clasped Whitehead double of the 
underlying knot type $K$ of $\leg$.

It is not difficult to show that no matter the original knot $\leg$, its Legendrian Whitehead double has $tb(Wh_{tb}(\leg) = 1$ and $r(Wh_{tb}(\leg) = 0$.  Further, it has at least one graded normal ruling, and hence a generating family by \cite{f-r}.  Moreover, we can use the techniques of the previous two sections to prove:

\begin{prop} \label{prop:wh}
	If $\leg$ be a Legendrian knot in $\rr^3$, then $Wh_{tb}(\leg)$ has a Lagrangian filling of genus $1$.  Further, if $r(\leg)=0$, then the filling is gf-compatible.
\end{prop}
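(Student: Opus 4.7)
The plan is to construct $Wh_{tb}(\leg)$ as a genus-$1$ Lagrangian filling by building its front from below through a combination of $0$-handles (small Legendrian unknots), Legendrian isotopies (Proposition~\ref{prop:isotopy}), and $1$-handles that perform saddle moves in the front diagram as pictured in Figure~\ref{fig:3d-surgery}. For the resulting filling to have genus $1$ and a single boundary component, the handles must satisfy the Euler characteristic relation $\#(0\text{-handles}) - \#(1\text{-handles}) = -1$.

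To identify the correct sequence of handles, I would work ``top down'' as the excerpt recommends: starting from the front of $Wh_{tb}(\leg)$, pair cusps whose cancellation by inverse saddle moves simplifies the link, using graded normal rulings to certify admissibility \cite{chv-pushkar, fuchs:augmentations}. After a series of such cancellations one arrives at a disjoint union of standard Legendrian unknots, corresponding to the $0$-handles; reversing this sequence yields the bottom-up construction. For the first claim, the $1$-handle attachments are performed using the local version Corollary~\ref{cor:q-surgery}, so that no global generating family on $\leg$ is required; this allows the construction to work for arbitrary $\leg \subset \rr^3$.

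For the second claim, when $r(\leg) = 0$ the Legendrian $\leg$ admits a $\zz$-valued Maslov potential, which extends to Maslov potentials on each intermediate Legendrian slice and on $Wh_{tb}(\leg)$. At each $1$-handle attaching region the two cusps being joined then lie in $\leg^{\succ j}$ for a common index $j$, making each attaching region a gf-attaching region in the sense of Definition~\ref{defn:core}. With this index matching in place, I can replace Corollary~\ref{cor:q-surgery} by the full Theorem~\ref{thm:q-surgery} at each handle attachment and use the gf-compatible form of Proposition~\ref{prop:isotopy} for the intermediate isotopies, obtaining a gf-compatible filling of $Wh_{tb}(\leg)$. The existence of a linear-at-infinity generating family on $Wh_{tb}(\leg)$ itself is then a byproduct of the construction, and also follows independently from the evident graded normal ruling on $Wh_{tb}(\leg)$ via the Fuchs--Rutherford correspondence \cite{f-r}.

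The main obstacle will be carrying out the top-down analysis so that the cusp cancellations produce exactly a genus-$1$ filling (and not some higher-genus or incorrectly-shaped surface), which requires the pairings of cusps to track both the clasp structure and the two parallel copies of $\leg$'s front in a coherent way. A secondary obstacle is embeddedness of the Lagrangian cobordism throughout: this is governed by the difference-function criterion of Lemma~\ref{lem:emb-family} and is handled by the $t$-reparameterization technique used in the proof of Proposition~\ref{prop:isotopy}, namely controlling the family $\delta_t(x,\e,\te)$ so that $|\delta_t|/t > |\partial_t \delta_t|$ at every potential immersion point.
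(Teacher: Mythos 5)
Your overall strategy (build the filling from below out of $0$-handles, Legendrian isotopies, and saddle-type $1$-handle attachments, with the Euler characteristic constraint forcing $\#(1\text{-handles})-\#(0\text{-handles})=1$, and with gf-compatibility in the $r=0$ case coming from matching cusp indices via a $\zz$-valued Maslov potential) is exactly the paper's strategy, and your division of labor between Corollary~\ref{cor:q-surgery} for arbitrary $\leg$ and Theorem~\ref{thm:q-surgery} when $r(\leg)=0$ is the right one. However, there is a genuine gap: the proposition is an existence statement, and you never actually exhibit the handle decomposition. You defer the entire constructive content to a ``top-down analysis'' of cusp cancellations guided by normal rulings, and you yourself flag carrying this out as ``the main obstacle.'' As written, nothing in the proposal certifies that $Wh_{tb}(\leg)$ is in fact reachable from a disjoint union of unknots by saddle moves realizing genus exactly $1$, nor that the two parallel copies of the front of $\leg$ and the clasp can be produced coherently; so the claim is not yet proved.

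The missing step is short and concrete, and it is worth seeing how the paper closes it. Attach a single $0$-handle to obtain a standard Legendrian unknot. Then perform a Legendrian isotopy that drags one cusp of this unknot along the trace of $\leg$ --- using Reidemeister~I moves at the cusps of $\leg$ and Reidemeister~II moves at its crossings --- until that cusp returns to sit beside the other cusp; the two strands of the dragged unknot automatically sweep out the $2$-copy of $\leg$, which is exactly why only \emph{one} $0$-handle is needed rather than a collection determined by a ruling. Two further Reidemeister~I moves (one on the top strand, one on the bottom strand of the original unknot) create the cusps needed for the clasp, and then two $1$-handle attachments produce the cusped $\infty$-tangle, yielding $Wh_{tb}(\leg)$. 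This gives $\chi = 1 - 2 = -1$, i.e.\ genus $1$ with one boundary component. When $r(\leg)=0$ the indices of the outer cusps match, so both attaching regions are gf-attaching regions and the whole composite is gf-compatible, exactly as in your second paragraph. Your remarks on embeddedness via Lemma~\ref{lem:emb-family} are correct but already built into Proposition~\ref{prop:isotopy} and Theorem~\ref{thm:q-surgery}, so no extra argument is needed there.
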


\begin{proof}
We will construct a Lagrangian filling
of $Wh_{tb}(\leg)$ as follows:  first, attach a $0$-handle to obtain an unknot.  Using the trace of $\leg$, use Reidemeister type I moves at the cusps and type II moves at the crossings to drag one cusp of the unknot along $\leg$ until it lies next to the other cusp of the original unknot; see the left side of Figure~\ref{fig:wh-double} for an illustration when $\leg$ is a Legendrian unknot with no crossings.  Perform two more Reidemeister type I moves, one each on the top and bottom strands of the original unknot.  Finally, attach two $1$-handles as indicated in the center of Figure~\ref{fig:wh-double} to
obtain $Wh_{tb}(\leg)$; if $r(\leg) = 0$, then the indices of the outer cusps match.  The composition of the $0$-handle attachment, the isotopy, and the two $1$-handle attachments yields a genus $1$ Lagrangian filling for $Wh_{tb}(\leg)$ which is gf-compatible if $r(\leg) = 0$.
\end{proof}

\begin{figure}
  \centerline{\includegraphics[width=5in]{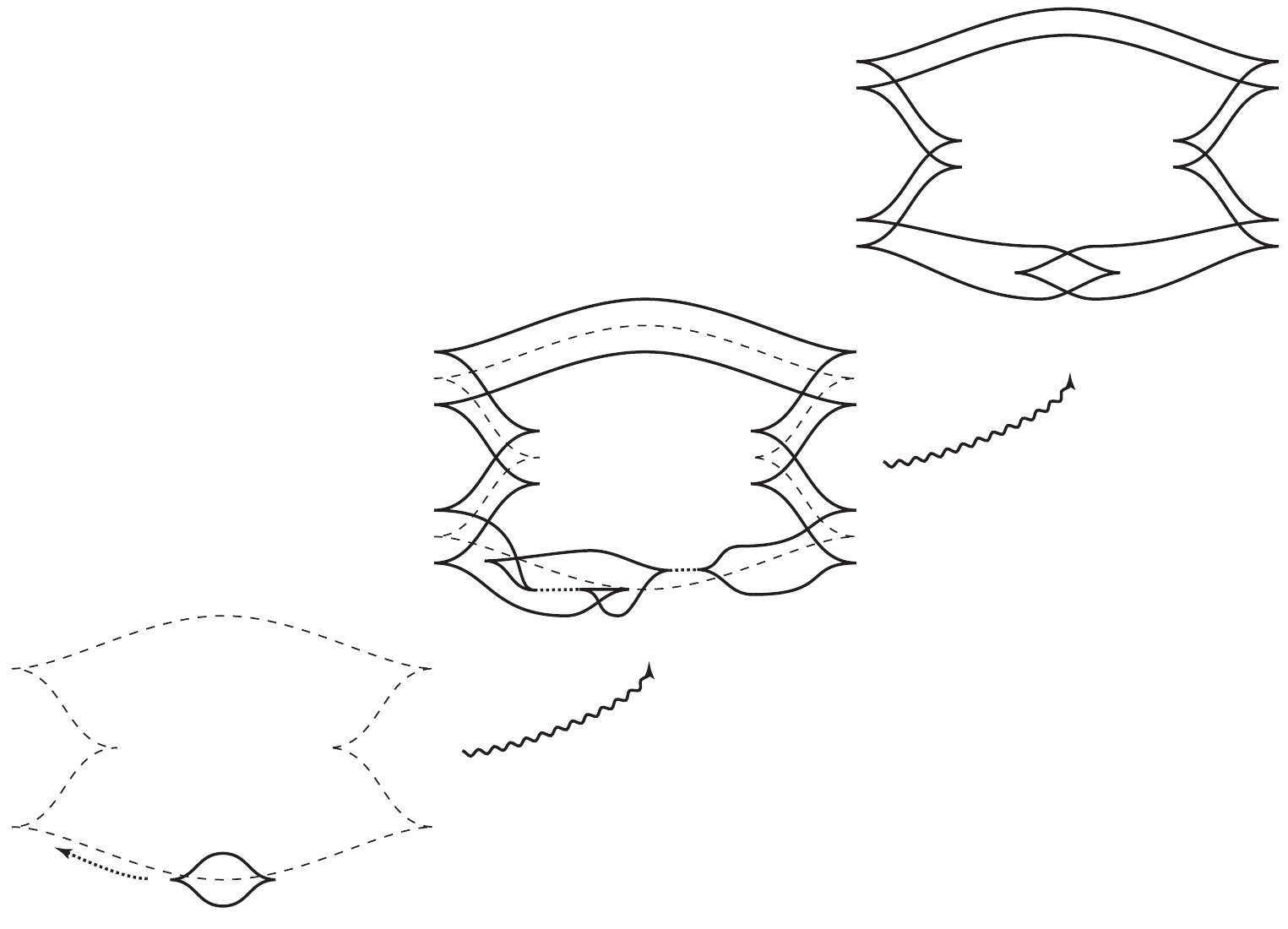}}
  \caption{The construction of a Lagrangian null-cobordism for a Whitehead double.}
  \label{fig:wh-double}
\end{figure}

By combining Proposition~\ref{prop:wh} with Theorem 1.5 of \cite{josh-lisa:obstr}, we obtain:

\begin{cor} 
  The set linearized contact homology or generating family polynomials for the Whitehead double of any Legendrian knot with rotation number $0$ must contain the polynomial $2 + t$.  
\end{cor}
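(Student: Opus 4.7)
The plan is to feed the genus-one gf-filling produced in Proposition~\ref{prop:wh} into the Cobordism Exact Sequence of Theorem~\ref{thm:cobord-les} and read off the generating family cohomology of $Wh_{tb}(\leg)$ directly from the topology of the filling surface.

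First, since $r(\leg)=0$, Proposition~\ref{prop:wh} supplies a gf-compatible null-cobordism $\emptyset \prec_{(\overline L,F)} (Wh_{tb}(\leg), f_+)$ whose underlying surface $L$ is a once-punctured torus ($g=1$, one boundary component $\partial L = Wh_{tb}(\leg)$). Because $f_-$ may be taken to be a non-zero linear function, its difference function $\delta_{f_-}$ has no positive critical points, so $GH^k(f_-)=0$ for every $k$. Plugging this into the long exact sequence
$$\cdots \to GH^k(f_-) \to GH^k(f_+) \to H^{k+1}(L, \partial L) \to \cdots$$
collapses it to isomorphisms $GH^k(f_+) \cong H^{k+1}(L,\partial L)$.

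Next, I would compute $H^*(L,\partial L)$ using Lefschetz duality. Since $L$ is a compact oriented surface with $H_0(L)=\zz$, $H_1(L)=\zz^{2}$, $H_2(L)=0$, duality gives $H^0(L,\partial L)=0$, $H^1(L,\partial L)=\zz^{2}$, $H^2(L,\partial L)=\zz$. Translating back through the isomorphism above yields $\dim GH^0(f_+)=2$ and $\dim GH^1(f_+)=1$ with all other groups vanishing, so the Poincar\'e polynomial is $\Gamma_{f_+}(t)=2+t$. Because this is realized by a genuine choice of generating family on $Wh_{tb}(\leg)$, the polynomial $2+t$ must belong to the set of generating family polynomials associated to $Wh_{tb}(\leg)$.

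For the linearized contact homology statement, the strategy is the same: an exact Lagrangian filling induces an augmentation of the Chekanov--Eliashberg dga, and the well-known cobordism long exact sequence for linearized contact homology (see for instance Ekholm's work cited in the introduction) specializes for a filling to $LCH^k(\leg_+)\cong H^{k+1}(L,\partial L)$, producing the same polynomial $2+t$ in the LCH set. The only thing to verify carefully is the index convention used to identify the Poincar\'e polynomial with $2+t$ rather than a shifted variant; no step is genuinely difficult here, as the entire argument is an immediate corollary of the two inputs (the explicit filling and the cobordism exact sequence).
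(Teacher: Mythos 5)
Your argument is correct and is essentially the paper's: the paper simply cites Theorem 1.5 of \cite{josh-lisa:obstr}, which is precisely the packaged form of the isomorphism $GH^k(f_+)\cong H^{k+1}(L,\leg_+)$ you re-derive by setting $GH^*(f_-)=0$ in the Cobordism Exact Sequence and then computing $H^*(L,\partial L)$ for the once-punctured torus. The LCH case is handled as you sketch, via the LCH analogue of the cobordism sequence referenced in the introduction.
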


Denote by $\overline{tb}(K)$ the  topological invariant
given by the maximal Thurston-Bennequin invariant of any Legendrian 
representative of a smooth knot $K$. By adding $s$ stabilizations to a maximal $tb$ representative of $K$, we obtain a Legendrian representative $\leg_s$ of $K$ with $tb(\leg_s) = \overline{tb}(K)-s$.  Applying the construction above, for all $s \geq 0$ we find a genus $1$ Lagrangian null-cobordism for $Wh_{tb}(\leg_s)$, which
is topologically the $(\overline{tb}(K)-s)$-twisted Whitehead double of the underlying knot type $K$ of $\leg$. Combining this construction with Chantraine's result that a Lagrangian filling realizes the smooth $4$-ball genus of a Legendrian knot \cite{chantraine}, we obtain a result of Rudolph:

\begin{cor}[Rudolph \cite{rudolph}]
  For any $r \leq \overline{tb} (K)$, the $r$-twisted Whitehead double of $K$ has  smooth $4$-ball genus equal to $1$.  
\end{cor}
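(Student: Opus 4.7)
The plan is to combine Proposition~\ref{prop:wh} with Chantraine's theorem that a Lagrangian filling of a Legendrian knot realizes the smooth $4$-ball genus of the underlying knot type~\cite{chantraine}. The key observation is that the topological genus-$1$ filling produced by Proposition~\ref{prop:wh} exists for \emph{every} Legendrian representative of $K$; the rotation-number hypothesis only enters when one demands gf-compatibility, which plays no role in the statement we are trying to prove.

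Given a smooth knot $K$ and an integer $r \leq \overline{tb}(K)$, I would set $s = \overline{tb}(K) - r \geq 0$ and start with a Legendrian representative of $K$ achieving the maximal Thurston--Bennequin number. Applying $s$ stabilizations (of either sign, or any mixture) yields a Legendrian representative $\leg_s$ of $K$ with $tb(\leg_s) = r$, as each stabilization lowers $tb$ by exactly one. By the definition of the Legendrian Whitehead double, the underlying smooth knot type of $Wh_{tb}(\leg_s)$ is precisely the $r$-twisted, positively-clasped Whitehead double of $K$.

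Next, I would invoke Proposition~\ref{prop:wh} to extract a Lagrangian filling of $Wh_{tb}(\leg_s)$ of genus~$1$, and then apply Chantraine's theorem to conclude that the smooth $4$-ball genus of the $r$-twisted Whitehead double of $K$ equals the genus of this filling, namely $1$. There is no real obstacle to overcome here: the stabilization step is classical, the filling is supplied by the already-established proposition, and the genus identification is an immediate appeal to \cite{chantraine}. The only subtlety worth flagging is that the topological half of Proposition~\ref{prop:wh} must be used unconditionally in the rotation number, so that the argument is insensitive to how many positive versus negative stabilizations are chosen when realizing the prescribed value of $tb$.
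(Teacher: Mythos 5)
Your proposal is correct and follows essentially the same route as the paper: stabilize a maximal-$tb$ representative of $K$ by $s = \overline{tb}(K) - r$ to obtain $\leg_s$ with $tb(\leg_s) = r$, apply the unconditional (non-gf) half of Proposition~\ref{prop:wh} to get a genus-$1$ exact Lagrangian filling of $Wh_{tb}(\leg_s)$, and invoke Chantraine's theorem to identify the smooth $4$-ball genus. Your observation that the rotation-number hypothesis is irrelevant to the topological filling is exactly the point the paper's argument relies on implicitly.
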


% **********
\subsection{Example: Positive Braids}
\label{ssec:braid}

Given a positive braid $B = \sigma_{i_1} \cdots \sigma_{i_k}$ on $s$ strands, form a Legendrian link $\leg_B$ as in the top of Figure~\ref{fig:pos-braid-constr}.  

\begin{prop} \label{prop:braid}
	For any positive braid $B$ with $k$ crossings on $s$ strands, the Legendrian link $\leg_B$ with $c$ components has a gf-filling of genus $\frac{1}{2}(2-c+k-s)$.
\end{prop}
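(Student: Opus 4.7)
The plan is to build a Lagrangian filling of $\leg_B$ in three stages: first, attach $s$ Lagrangian $0$-handles to produce $s$ disjoint standard unknots; second, apply a Legendrian isotopy to arrange them as the trivial $s$-braid closure; and third, attach one Lagrangian $1$-handle per crossing of $B = \sigma_{i_1}\cdots\sigma_{i_k}$ to introduce the braid crossings in order. Each move is gf-compatible by Theorem~\ref{thm:q-surgery} and Proposition~\ref{prop:isotopy}, so their composition yields the desired gf-filling, and the genus will follow from an Euler-characteristic count.

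In more detail, start from the empty Legendrian and apply Theorem~\ref{thm:q-surgery} with $q=0$ a total of $s$ times, obtaining a gf-compatible filling by a disjoint union of $s$ disks whose top Legendrian is a disjoint union of $s$ standard Legendrian unknots. Next, apply Proposition~\ref{prop:isotopy} along a Legendrian isotopy that rearranges these unknots as the trivial $s$-braid closure, with the strands positioned so that at each intended crossing location two adjacent strands are close enough to admit the local $1$-handle model of Figure~\ref{fig:3d-surgery}; being a concordance, this step contributes no topology to the filling. Then, processing the crossings of $B$ left-to-right, for each $\sigma_{i_j}$ apply Theorem~\ref{thm:q-surgery} with $q=1$ to attach a Lagrangian $1$-handle in a neighborhood of the appropriate pair of adjacent strands, using the local modification in Figure~\ref{fig:3d-surgery} to introduce the prescribed positive crossing. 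After all $k$ such attachments the Legendrian at the top is exactly $\leg_B$, and the composite cobordism remains gf-compatible since compatible generating families can be smoothly concatenated in the cobordism variable.

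Finally, each $0$-handle contributes $+1$ to the Euler characteristic and each $1$-handle contributes $-1$, so $\chi(L) = s - k$. The filling $L$ is connected --- it is the standard Bennequin surface of the positive braid $B$, which is connected whenever every strand participates in at least one crossing --- and has $c$ boundary components matching those of $\leg_B$. Hence $\chi(L) = 2 - 2g - c$ yields $g = \frac{1}{2}(2 - c + k - s)$, as claimed. The main technical point is verifying that each Lagrangian $1$-handle from Theorem~\ref{thm:q-surgery} can be localized so as to introduce a single positive crossing at the prescribed location without disturbing the rest of the front diagram; this uses the same local model already employed in the proof of Proposition~\ref{prop:wh}.
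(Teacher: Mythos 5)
Your Euler-characteristic bookkeeping is fine, and the overall shape of the filling ($\chi(L)=s-k$) matches the paper's. But there is a genuine gap at the central step: the claim that a single application of Theorem~\ref{thm:q-surgery} with $q=1$ ``introduces the prescribed positive crossing'' between two adjacent parallel strands. The local model of Definition~\ref{defn:core} and Theorem~\ref{thm:q-surgery} requires the attaching region to meet $\leg_-$ along a pair of cusps (the surgery domain lies in $\leg^{\succ}$, indeed in $\leg^{\succ j}$ for gf-compatibility), and the effect of the handle on the front, going up the cobordism, is to replace two cusps whose points face each other by two parallel strands. It changes connectivity; it does not create a crossing. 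At the intended crossing locations your trivial $s$-braid closure has no cusps at all, so the theorem cannot be applied there directly, and even after manufacturing cusps by Reidemeister~I moves (as in the proof of Proposition~\ref{prop:wh}) the handle attachment outputs two parallel strands, not the transverse crossing $\sigma_{i_j}$. Nor can the isotopy step supply the crossings afterwards: Legendrian isotopy creates crossings only in cancelling pairs, so the $k$ crossings of $B$ are never actually produced by your construction. (The Ekholm--Honda--K\'alm\'an pinch move does resolve a crossing directly, but that is a different local model, and importing it here would require a separate argument that it is gf-compatible.)

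This is exactly the difficulty the paper's proof is organized around. Rather than $s$ disks plus $k$ bands, the paper attaches $(s-1)$ $0$-handles \emph{for each generator} $\sigma_{i_j}$ and uses a Legendrian isotopy to turn the resulting nested unlink $U_j$ into a configuration already carrying the crossing of $\sigma_{i_j}$ together with the cusps needed later; the $1$-handles (there are $s$ of them between each pair of consecutive unlinks, attached from the outermost cusps inward) are then used only to fuse the $U_j$'s along matching cusps, never to create crossings. The handle count is different from yours ($k(s-1)$ zero-handles and $(k-1)s$ one-handles) but gives the same $\chi = s-k$ and hence the same genus. To salvage your argument you would need either to justify a gf-compatible ``crossing-creating'' saddle as a corollary of Theorem~\ref{thm:q-surgery}, or to restructure the construction so that all crossings are present in the bottom Legendrian before any $1$-handle is attached, which is what the paper does. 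You should also note that the index-matching condition on the cusps ($\leg^{\succ j}$ for a common $j$) must be verified for each $1$-handle to keep the cobordism gf-compatible.
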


\begin{proof}
We  construct a gf-filling  of $\leg_B$ as follows:  first, for each generator $\sigma_{i_j}$, attach $(s-1)$ $0$-handles and perform an isotopy to obtain a nested unlink $U_j$ of $(s-1)$ components
  with a single crossing mimicking the $\sigma_{ij}$ crossing in $\leg$;
for an example, see the left side of Figure~\ref{fig:pos-braid-constr}.  Position these unlinks next to each other, ordered from $U_1$ to $U_k$.  Finally, successively attach $s$ $1$-handles
 between each pair of adjacent unlinks, successively starting at the
outermost cusps,  as in  Figure~\ref{fig:pos-braid-constr} to obtain $\leg_B$.    If $\leg_B$ has $c$ components, then this  filling has genus $\frac12(2-c+k- s)$.  
\end{proof}

In particular, when $\leg_B$ is a knot,
we see that the smooth $4$-ball genus of $\leg_B$ is $\frac12(1+k-s)$, as originally proved by  Rudolph \cite[\S3]{rudolph:qp-obstruction} as a corollary of Kronheimer and Mrowka's work on embedded surfaces in $4$-manifolds \cite{km:surfaces1}.

\begin{figure}
  \centerline{\includegraphics{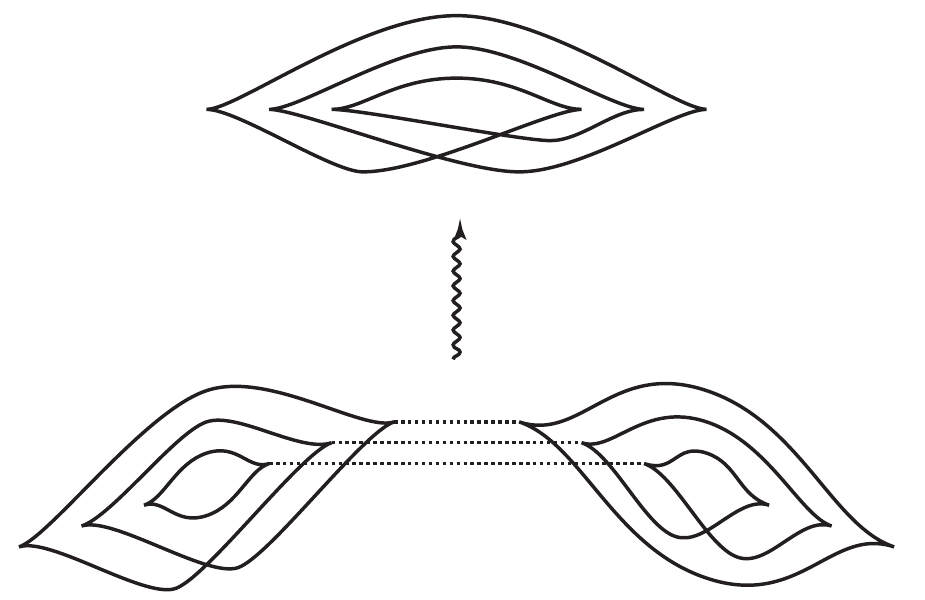}}
  \caption{The construction of a Lagrangian null-cobordism of genus $0$ for the zero closure of the positive braid $\sigma_2 \sigma_1$.  The $1$-handles are attached in succession from the outermost to innermost cusps.  }
  \label{fig:pos-braid-constr}
\end{figure}

\begin{rem}  In a similar spirit, in \cite{bty} Lagrangian fillings of positive, Legendrian rational links are constructed.  In particular, it is shown that the smooth $4$-ball genus of these positive 
rational knots  can be calculated from its rational notation, see \cite[Remark  1.8]{bty}.
\end{rem}

\begin{rem}
The discussion in this subsection  is the beginning of a more interesting story about the relationship between various notions of positivity (braid positivity, positivity, and (strong) quasi-positivity) and the existence of a Lagrangian filling; see \cite{positivity} for a deeper exploration.
\end{rem}

% ********************
\section{Legendrian Geography}
\label{sec:geo-bot}
 With the isotopy, spinning, and, most importantly, the handle attachment constructions in hand, we proceed to apply them to two questions about the geography of Legendrian submanifolds:  a non-classical geography question (which generating family polynomials can be realized by Legendrian submanifolds?) and the classical fillable geography question (what Thurston-Bennequin numbers can be realized by fillable Legendrian submanifolds?).  Throughout this section, we work with coefficients in a field \ff.

% **********
\subsection{Duality and Compatible Polynomials}
\label{ssec:duality}

After strengthening the duality exact sequence for generating family cohomology of \cite{josh-lisa:obstr} to better take into account the algebraic topology of the underlying Legendrian, we prove Theorem~\ref{thm:compatible}, which restricts the possible Poincar\'e polynomials for the generating family cohomology.  Analogous versions of these results for linearized contact homology appear in
\cite{high-d-duality}. The  strengthening of the duality exact sequence takes two forms.  First, we relate the maps in the duality exact sequence to the Poincar\'e duality of the Legendrian; second, we prove the existence of a ``fundamental class'' for the generating family homology. 

\begin{thm}[Duality] \label{thm:duality} If $\leg$ is a Legendrian submanifold of $J^1M$ with linear-at-infinity generating family $f$, then there is a long exact sequence:
  \begin{equation} \label{eqn:duality-les}
    \xymatrix@R=7pt@C=15pt{
     \cdots \ar[r] & GH^{k-1}(f) \ar[r]^-{\rho_k} & GH_{n-k}(f)
      \ar[r]^-{\sigma_k} & H^{k}(\Lambda) \ar[r]^-{\delta_k}
      & \cdots.   }
  \end{equation}
	The maps $\delta_k$ satisfy two further properties:
	\begin{enumerate}
	\item If $\gamma: H^k(\leg) \to H_{n-k}(\leg)$ is the Poincar\'e duality isomorphism, then, when using coefficients in a field, $\gamma \circ \sigma_k$ is the adjoint of the map $\delta_{n-k}$.
	\item The map $\delta_n$ does not vanish. In particular, over a field, it is an isomorphism when $\leg$ is connected.
	\end{enumerate}
\end{thm}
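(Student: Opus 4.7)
The plan is to exploit the $\zz/2$ symmetry generated by the involution $\tau: M\times\rr^N\times\rr^N \to M\times\rr^N\times\rr^N$, $\tau(x,\e,\te)=(x,\te,\e)$, which satisfies $\delta\circ\tau=-\delta$ and therefore identifies the sublevel set $\delta^a$ with the superlevel set $\{\delta \ge -a\}$. For the long exact sequence itself, I would follow the strategy of \cite{josh-lisa:obstr}: apply the long exact sequence of the triple $(\delta^\omega,\delta^\epsilon,\delta^{-\omega})$, use the Morse--Bott Thom isomorphism $H^*(\delta^\epsilon,\delta^{-\epsilon}) \cong H^{*-N}(\leg)$ arising from the fact that the critical set of $\delta$ along the diagonal $\{\e=\te\}$ is a Morse--Bott manifold diffeomorphic to $\Sigma_f\cong\leg$ whose Hessian (by a short computation) has signature $(N,N)$, and invoke Poincar\'e--Lefschetz duality on the tame cobordism $\{-\omega\le\delta\le\omega\}$, which when pre-composed with $\tau$ identifies $\gh{k}{f}$ with a suitably shifted generating family homology group. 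Assembling these produces the sequence (\ref{eqn:duality-les}).

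The adjoint property (1) rests on the observation that $\tau$ fixes the diagonal pointwise, so the $\tau$-induced Poincar\'e--Lefschetz duality, when restricted to the $H^*(\leg)$-piece of the sequence, reduces to ordinary Poincar\'e duality $\gamma$ on $\leg$. Both $\sigma_k$ and $\delta_{n-k}$ arise as connecting homomorphisms in the triple construction, and they are exchanged by the $\tau$-Poincar\'e--Lefschetz duality of the whole sequence. Unpacking this at the chain level, e.g.\ by exhibiting the duality between the Morse complexes of $\delta$ and $-\delta = \delta\circ\tau$, yields the adjoint identity $\gamma\circ\sigma_k = (\delta_{n-k})^*$ directly.

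For property (2), the non-vanishing of $\delta_n$ is equivalent, via the adjoint identity just established, to the non-vanishing of $\sigma_0 : GH_n(f) \to H^0(\leg)$. I would exhibit a specific nontrivial class in $GH_n(f) = H_{n+N+1}(\delta^\omega,\delta^\epsilon)$ --- namely the cycle obtained from the unstable disk of the Morse-theoretically ``highest'' Reeb chord generator --- and trace its boundary in the LES of the triple $(\delta^\omega,\delta^\epsilon,\delta^{-\epsilon})$. The boundary lies on the diagonal critical submanifold and, by Morse--Bott degree counting, represents the top class of $\Sigma_f\cong\leg$, hence maps to the generator $1\in H^0(\leg)$ under Poincar\'e duality. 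For $\leg$ connected over a field, $H^n(\leg)\cong\ff$, so injectivity of $\delta_n$ follows automatically; surjectivity onto $GH^n(f)$ then requires showing the next map $GH^n(f)\to GH_{n-1}(f)$ in the sequence vanishes, which I would verify via the adjoint identity applied in degree $n-1$. The step I expect to be the most delicate is the chain-level bookkeeping --- signs, orientations, and Morse--Bott conventions --- needed to make the adjoint symmetry and the explicit cycle construction go through cleanly; once those are in place, everything else is essentially formal.
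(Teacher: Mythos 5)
Your treatment of the long exact sequence and of the adjoint property (1) follows essentially the paper's line: the $\tau$-symmetry underlies the isomorphism (Lemma~\ref{lem:+-iso}) between $H^j(\delta^\omega,\delta^{-a})$ and $H_{2N+n-j}(\delta^a,\delta^{-\omega})$, the relevant Thom index shift is $N$ as you observe, and the adjoint relation then drops out of the two commuting diagrams built from the long exact sequences of the triples $(\delta^\omega,\delta^\epsilon,\delta^{-\epsilon})$ and $(\delta^\omega,\delta^\epsilon,\delta^{-\omega})$. You are somewhat glib about the precise diagram chase that produces $\gamma\circ\sigma_k=(\delta_{n-k})^*$, but the strategy is the same and would go through.

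Your plan for part (2), however, has a real gap. You propose to pick ``the cycle obtained from the unstable disk of the Morse-theoretically `highest' Reeb chord generator'' inside $GH_n(f)=H_{n+N+1}(\delta^\omega,\delta^\epsilon)$ and trace its boundary. There are two problems. First, there is no reason a priori that the critical point of $\delta$ with the largest positive critical value has index $n+N+1$; indeed, that $GH_n(f)\neq 0$ at all is part of what is being proved, so you cannot simply write down a generator. Second, ``trace its boundary'' is not an operation that can be carried out without controlling, at the chain level, the gradient trajectories that interact with the degenerate critical submanifold $\Sigma$ on the diagonal. The paper's proof goes the other direction precisely to avoid both difficulties: it shows that the sum $p$ of point classes lies in $\ker\delta_0$, and by exactness (plus part (1)) this forces $\delta_n\neq 0$. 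Establishing $p\in\ker\delta_0$ is itself nontrivial and requires the key new ingredient you are missing: the moduli space $\mathcal{M}(q;M\times\Delta)$ of gradient flow lines launched from the diagonal, its dimension formula $\ind_\delta q-N$ after a metric perturbation, and the compactness/gluing statement of Lemma~\ref{lem:diag-compactness}, which lets one manufacture an explicit cochain $c$ with $\delta_0(p)+dc=0$. Without something playing the role of that moduli space, the Morse--Bott bookkeeping you flag as ``delicate'' is not merely delicate --- it is undefined. (A smaller point: your appeal to ``the next map $GH^n(f)\to GH_{n-1}(f)$'' for surjectivity misreads the sequence; the map after $\delta_n$ is $\rho_{n+1}:GH^n(f)\to GH_{-1}(f)$.)
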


We will delay the proof of this theorem until the appendix, as the somewhat technical proof uses methods that are quite different than those in the rest of the paper. We  call an element $\alpha \in \im \delta_k$ a \dfn{manifold class}.  If $\leg$ is connected, then the image under $\delta_n$ of the top class of $\leg$ in $GH^n(f)$ is called the \dfn{fundamental class}.

\begin{rem}
	If we were to use the total generating family cohomology instead of the relative version, then part (2) of the duality theorem above and long exact sequence (\ref{eq:gh-les}) would imply that there is no fundamental class in $\widetilde{GH}\hphantom{}^n(f)$, but that there is a manifold class in degree $0$.  This indicates that the total generating family cohomology would be more convenient for the study of the cohomology \emph{ring}, as the degree $0$ manifold class would constitute a unit.  
\end{rem}

The strengthened duality theorem is the key ingredient in the proof of Theorem~\ref{thm:compatible}, which states that every generating family polynomial is compatible with duality.

\begin{proof}[Proof of Theorem~\ref{thm:compatible}]
To set notation, let $d_k = \dim GH^k(f)$, let $q_k = \dim \im \delta_k$, let $b_k = \dim H^k(\leg)$, and let $p_k = d_k - q_k$.

We begin by proving that, under the hypotheses of Theorem~\ref{thm:duality}, when using field coefficients we have
\begin{equation} \label{eq:betti} 		
	b_k = q_k+q_{n-k}.
\end{equation}

First note that since $\delta_{n-k}$ and $\sigma_k$ are adjoints up to the isomorphism $\gamma$, the dimensions of their kernels are the same.  The relation (\ref{eq:betti}) now follows from the rank-nullity theorem and the exactness of the sequence (\ref{eqn:duality-les}) at $H^k(\leg)$.

We next claim that for all $k \in \zz$, we have:
\begin{equation} \label{eq:pk}
p_k = p_{n-1-k}
\end{equation}
The rank-nullity theorem for $\rho_k$ and the exactness of the sequence (\ref{eqn:duality-les}) at $GH^k(f)$ imply that 
\begin{equation} \label{eq:dk1}
	d_k = q_k + \dim \ker \sigma_{k+1}.
\end{equation}
Equation (\ref{eq:betti}) and the rank-nullity theorem for $\delta_k$ then imply that $q_{n-1-k} = b_{k+1} - q_{k+1} = \dim \ker \delta_{k+1}$.  Combining this fact with the rank-nullity theorem for $\sigma_{k+1}$ and the exactness of the sequence (\ref{eqn:duality-les}) at $GH_{n-1-k}(f)$, we obtain:
\begin{equation} \label{eq:dk2}
	d_{n-k-1} = \dim \ker \sigma_{k+1}  + q_{n-1-k}.
\end{equation}
Combining Equations~(\ref{eq:dk1}) and (\ref{eq:dk2}) then yields
\begin{equation} \label{eq:dk3}
	d_k = q_k + p_{n-1-k},
\end{equation}
and Equation~(\ref{eq:pk}) now follows from the definition of $p_k$.

The theorem is now a consequence of Equations~(\ref{eq:betti}), (\ref{eq:pk}), and (\ref{eq:dk3}) with $q_k$ and $p_k$ forming the coefficients of $q(t)$ and $p(t)$, respectively.  The fact that $q_n \neq 0$ is a consequence of Theorem~\ref{thm:duality}.
\end{proof}

% **********
\subsection{Non-Classical Geography}
\label{ssec:non-classical-geo}

In the last section, we proved that every generating family polynomial of a connected Legendrian submanifold of $J^1M$ is compatible with duality; in this section, we use the constructions of Sections~\ref{sec:basic} and \ref{sec:surgery} to prove Theorem~\ref{thm:geography}, namely that every Laurent polynomial in connected form that is compatible with duality is the generating family polynomial for \emph{some} connected Legendrian submanifold of dimension $n \geq 2$.    We will construct the Legendrian in a  $J^1\rr^n$ coordinate chart inside $J^1M$. 

Before beginning the constructions proper, we set down a useful computation for the generating family cohomology of a gf-compatible $0$-surgery connecting two connected Legendrians.

\begin{lem}[$0$-surgery Lemma] \label{lem:0-surg}  Suppose $\leg_-$ has a tame generating family $f_-$  so 
that $\Gamma_{f_-}(t)$ is of the form
$$\Gamma_{f_-}(t) = \left(2t^n + q_{n-1} t^{n-1} + \dots + q_1t  + q_0\right) + p(t) + t^{n-1}p(t^{-1}),$$
where $p(t) = \sum_{i \in \mathbb Z, i \geq \lfloor \frac{n-1}{2} \rfloor} p_i t^i$.  
If $\leg_+$ is a connected Legendrian obtained from $\leg_-$ by a gf-compatible $0$-surgery, then $\leg_+$ has a tame
generating family $f_+$ with 
\[\Gamma_{f_+}(t) = \Gamma_{f_-}(t) - t^n.\]
\end{lem}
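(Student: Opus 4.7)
The plan is to produce the required cobordism via Theorem~\ref{thm:q-surgery} (case $q=1$), read off the resulting map on generating family cohomology through the Cobordism Exact Sequence (Theorem~\ref{thm:cobord-les}), and then pin down the polynomial change by combining Theorem~\ref{thm:compatible}'s duality decomposition with the connectedness of $\leg_+$.

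First, Theorem~\ref{thm:q-surgery} with $q=1$ produces a gf-compatible embedded Lagrangian cobordism $(\leg_-,f_-)\prec_{(\overline L,F)}(\leg_+,f_+)$ whose compact piece $L$ is homotopy equivalent to $\leg_-$ with a $1$-cell attached. Since $\leg_+$ is connected and a single $1$-handle can decrease component count by at most one, the two attaching points of the cell must lie in different components of $\leg_-$; hence $(\leg_-\cup D^1)/\leg_-\simeq S^1$, and Lefschetz duality yields
\begin{equation*}
H^k(L,\leg_+)\cong H_{n+1-k}(L,\leg_-)\cong \widetilde H_{n+1-k}(S^1),
\end{equation*}
which is $\ff$ for $k=n$ and zero otherwise. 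Feeding this into the Cobordism Exact Sequence gives isomorphisms $GH^k(f_-)\cong GH^k(f_+)$ for $k\notin\{n-1,n\}$ together with the five-term exact sequence
\begin{equation*}
0\to GH^{n-1}(f_-)\to GH^{n-1}(f_+)\to \ff\to GH^n(f_-)\to GH^n(f_+)\to 0,
\end{equation*}
so that $\Gamma_{f_+}(t)-\Gamma_{f_-}(t)=\Delta_n t^n+\Delta_{n-1}t^{n-1}$ with $\Delta_n-\Delta_{n-1}=-1$ by Euler-characteristic bookkeeping.

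To fix $\Delta_n$ itself, the coefficient of $t^n$ in $\Gamma_f$ needs to be separated into its $q$- and $p$-contributions. Reading Theorem~\ref{thm:compatible}'s decomposition $\Gamma_f(t)=\sum q_k t^k+p(t)+t^{n-1}p(t^{-1})$ gives $[t^n]\Gamma_f=q_n+p_n$ and $[t^{-1}]\Gamma_f=p_n$, so $q_n=\dim GH^n(f)-\dim GH^{-1}(f)$. The isomorphism $GH^{-1}(f_-)\cong GH^{-1}(f_+)$ obtained above forces $p_n(+)=p_n(-)$, while Theorem~\ref{thm:duality}(2) applied to the connected $\leg_+$ forces $q_n(+)=1$, against $q_n(-)=2$ by hypothesis. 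Combining,
\begin{equation*}
\Delta_n=\bigl(q_n(+)-q_n(-)\bigr)+\bigl(p_n(+)-p_n(-)\bigr)=-1,
\end{equation*}
whence $\Delta_{n-1}=0$ and $\Gamma_{f_+}(t)=\Gamma_{f_-}(t)-t^n$. The hard part will be this final pinning-down: the Cobordism Exact Sequence alone fixes only the difference $\Delta_n-\Delta_{n-1}$, and the key observation that unlocks the argument is that the extremal coefficients of $\Gamma_f$ cleanly decouple $q_n$ from $p_n$, so preservation of $GH^{-1}$ together with the connectedness constraint on $q_n(+)$ is exactly enough to separate $\Delta_n$ from $\Delta_{n-1}$.
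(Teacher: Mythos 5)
Your proof is correct and follows essentially the same route as the paper: reduce the Cobordism Exact Sequence to a five-term sequence concentrated in degrees $n-1$ and $n$, carry the preserved class $\dim GH^{-1}(f_+)=\dim GH^{-1}(f_-)=p_n$ back through duality, and invoke Theorem~\ref{thm:duality}(2) for connected $\leg_+$ to pin down the top-degree dimension. The only cosmetic difference is in packaging the final step: the paper reads $\dim GH_n(f_+)=1+p_n$ directly from the first terms of the Duality Exact Sequence, whereas you pass through Theorem~\ref{thm:compatible}'s decomposition $[t^n]\Gamma_f=q_n+p_n$ and the identification $q_n(+)=\dim\im\delta_n=1$; since that decomposition is itself derived from the Duality Exact Sequence, the two formulations carry exactly the same information.
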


\begin{proof}  Let $L$ denote the $(n+1)$-dimensional Lagrangian cobordism between the Legendrians $\leg_-$ and $\leg_+$ described in the hypotheses of the lemma.
Since $L$ is obtained from $\Lambda_-$ by attaching a $1$-handle, $L$ is obtained from
$\Lambda_+$ by attaching an $n$-handle.   
When $k$ is neither $n$ nor $n-1$, the Cobordism Exact Sequence of Theorem \ref{thm:cobord-les} implies that $GH^k(f_-) \simeq GH^k(f_+)$.  The remaining terms of the Cobordism Exact Sequence are:
\begin{equation} \label{eqn:0-surg-les}
0 \to GH^{n-1}(f_-) \to GH^{n-1}(f_+) \to \mathbb F \to GH^{n}(f_-) \to GH^{n}(f_+) \to 0 .
\end{equation}
Since $\dim GH^n(f_-) = 2+p_n$ by hypothesis, the exactness of the sequence above implies that either $\dim GH^n(f_+) = 1+p_n$ or $\dim GH^n(f_+) = 2+p_n$.  To see which of these is correct, we turn to the Duality Exact Sequence (\ref{eqn:duality-les}), which tells us that:
\begin{equation*}
 0 \to GH^{-1}(f_+) \to GH_{n}(f_+) \to H^0(\leg_+) \to \cdots.
\end{equation*}
We already know the isomorphism $GH^{-1}(f_+) \simeq GH^{-1}(f_-)$, and thus we have that
$\dim GH^{-1}(f_+) = p_{-1} = p_n$.  Since $\leg_+$ is connected, we use Theorem~\ref{thm:duality}(2) to 
conclude that $\dim GH_{n}(f_+) = \dim GH^{n}(f_+)$ must be $1 + p_n$.  It then follows from the exactness of the sequence (\ref{eqn:0-surg-les}) that $\dim GH^{n-1}(f_+) = q_{n-1}+p_{n-1}$, thus proving the lemma.
\end{proof}

One situation in which we can use this lemma is in the connected sum of two connected Legendrians in $J^1\rr^n$. Given two Legendrians with tame generating families $(\leg_i,f_i)$, $i=1,2$, sufficiently separated in the horizontal direction, there is a tame generating family $f_-$ for their disjoint union with
\[\Gamma_{f_-}(t) = \Gamma_{f_1}(t) + \Gamma_{f_2}(t);\]
see Proposition 3.19 of \cite{josh-lisa:obstr} for more details. Let $(\leg_+,f_+)$ be the result of a gf-compatible $0$-surgery on $(\leg_-,f_-)$ that connects the two components of $\leg_-$.  

\begin{cor}[Connected Sums] \label{cor:connect-sum}
	The generating family cohomology of the connect sum $(\leg_+,f_+)$  may be computed as follows:
	\[ \Gamma_{f_+}(t) = \Gamma_{f_1}(t) + \Gamma_{f_2}(t) - t^n.\]
\end{cor}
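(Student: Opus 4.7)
The plan is to combine the additivity of generating family cohomology under disjoint union with Lemma~\ref{lem:0-surg}. Since $\leg_1$ and $\leg_2$ are connected Legendrians with tame generating families sufficiently separated in the horizontal direction, Proposition 3.19 of \cite{josh-lisa:obstr} provides a tame generating family $f_-$ for the disjoint union $\leg_- = \leg_1 \sqcup \leg_2$ satisfying
\[\Gamma_{f_-}(t) = \Gamma_{f_1}(t) + \Gamma_{f_2}(t).\]

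Next, I would verify that $f_-$ satisfies the hypothesis of the $0$-surgery Lemma. By Theorem~\ref{thm:compatible}, each $\Gamma_{f_i}(t)$ is in connected form: its leading coefficient is $q_n^{(i)} = 1$, its constant term is $q_0^{(i)} = 0$, and it decomposes as $q^{(i)}(t) + p^{(i)}(t) + t^{n-1} p^{(i)}(t^{-1})$, where $q^{(i)}$ is a polynomial of degree at most $n$ and $p^{(i)}$ is a Laurent polynomial supported in degrees $\geq \lfloor (n-1)/2\rfloor$. Summing, $\Gamma_{f_-}(t)$ is duality-compatible with coefficient $q_n = 2$ on $t^n$, and with the polynomial part $p^{(1)}(t) + p^{(2)}(t)$ playing the role of $p(t)$. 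This is precisely the form required by the hypothesis of Lemma~\ref{lem:0-surg}.

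Finally, since $\leg_+$ is connected (the $0$-surgery joins the two components of $\leg_-$), Lemma~\ref{lem:0-surg} applies and produces a tame generating family $f_+$ for $\leg_+$ whose polynomial satisfies
\[\Gamma_{f_+}(t) = \Gamma_{f_-}(t) - t^n = \Gamma_{f_1}(t) + \Gamma_{f_2}(t) - t^n,\]
as desired. There is essentially no obstacle; the corollary is a direct concatenation of disjoint-union additivity and the $0$-surgery Lemma. The only minor technicality worth spelling out is that $\Gamma_{f_-}(t)$ inherits the required duality-compatible shape with $q_n = 2$ from the connected form of each $\Gamma_{f_i}(t)$, which is immediate from Theorem~\ref{thm:compatible}.
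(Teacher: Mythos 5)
Your proposal is correct and follows exactly the same route the paper takes: disjoint-union additivity via Proposition 3.19 of \cite{josh-lisa:obstr} followed by the $0$-surgery Lemma~\ref{lem:0-surg}. The only difference is that you explicitly verify (via Theorem~\ref{thm:compatible}) that $\Gamma_{f_-}$ has the $2t^n$-leading duality-compatible shape required by the lemma, a point the paper leaves implicit.
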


The first important step in the proof of Theorem~\ref{thm:geography} is to build up Legendrians with interesting manifold classes, which are represented by the polynomial $q_0 + q_1t + \cdots + q_nt^n$ in Equation~(\ref{eqn:poly}).

\begin{lem}[Manifold Lemma] \label{lem:mfld} 
For all integers $a = 1, \ldots, n-1$,
there exists a connected, $n$-dimensional Legendrian submanifold $\leg_a \subset J^1M$ with a tame generating family $f_a$ such that 
\[\Gamma_{f_a}(t) = t^n + t^a.\]
In addition, there exists a $2$-component $n$-dimensional Legendrian link $\leg_0$ with a tame generating family $f_0$ such that
\[\Gamma_{f_0}(t) = t^n + t^0.\]
\end{lem}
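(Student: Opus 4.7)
The plan is to construct each $\leg_a$ by a single gf-compatible Lagrangian handle attachment performed on the standard flying saucer $\leg^n_\infty \subset J^1\rr^n$, the $n$-dimensional Legendrian sphere obtained by iterating the spinning construction of Proposition \ref{prop:front-spin} starting from a $1$-dimensional Legendrian unknot (see Figure \ref{fig:front-spin}). Iterated spinning produces a tame, linear-at-infinity generating family $f^n_\infty$ whose difference function has a single positive critical point; equivalently, $\Gamma_{f^n_\infty}(t) = t^n$. Moreover, its cusp locus is a single equatorial $(n-1)$-sphere of uniform index, and the front admits the semi-cubic normal form required by Definition \ref{defn:core}.

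Fix $a \in \{0, 1, \ldots, n-1\}$ and set $q = n - a$. We attach a gf-compatible Lagrangian $q$-handle to $\leg^n_\infty$ via Theorem \ref{thm:q-surgery}, taking the attaching sphere $S^{q-1}$ of the core disk to be a standardly embedded subsphere of the cusp edge $S^{n-1}$ of $\leg^n_\infty$. Because every cusp point of the flying saucer has the same index $j$, the surgery domain automatically lies in $\leg^{\succ j}$, so the attaching region is gf-compatible. The theorem then yields a gf-compatible cobordism $(\leg^n_\infty, f^n_\infty) \prec_{(\overline{L}, F)} (\leg_a, f_a)$, in which $\leg_a$ is obtained from $\leg^n_\infty \cong S^n$ by a standard $(q-1)$-surgery along a trivially embedded $S^{q-1}$. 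A classical surgery computation gives
\[
\leg_a \cong \begin{cases} S^{n-a} \times S^a & \text{if } 1 \leq a \leq n-1, \\ S^n \sqcup S^n & \text{if } a = 0, \end{cases}
\]
so $\leg_a$ is connected for $a \geq 1$ and has two components for $a = 0$, exactly as required.

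To read off $\Gamma_{f_a}(t)$, we apply the Cobordism Exact Sequence (Theorem \ref{thm:cobord-les}). The cobordism $L$ deformation retracts to $\leg^n_\infty$ with one $q$-cell attached, so $H_*(L, \leg^n_\infty)$ is concentrated in degree $q$ with value $\ff$. Poincar\'e--Lefschetz duality for the cobordism gives $H^k(L, \leg_a) \cong H_{n+1-k}(L, \leg^n_\infty) = \ff$ if $k = a + 1$ and zero otherwise. Plugging this and $\Gamma_{f^n_\infty}(t) = t^n$ into the Cobordism Exact Sequence, all terms $GH^k(f_a)$ with $k \neq a, n$ are forced equal to $GH^k(f^n_\infty) = 0$, while a short nontrivial segment around degree $a$ produces a new generator. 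In all cases except $a = n-1$ the sequence degenerates to immediately give $GH^a(f_a) = \ff$ and $GH^n(f_a) = \ff$; in the case $a = n - 1$ the segment has matching Euler characteristic and one uses Theorem \ref{thm:duality}(2) (the non-vanishing of the fundamental class for the connected Legendrian $\leg_a = S^1 \times S^{n-1}$) to conclude $GH^n(f_a) = \ff$ and hence $GH^{n-1}(f_a) = \ff$. In every case $\Gamma_{f_a}(t) = t^n + t^a$.

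The main technical points to check are (i) that the attaching region genuinely satisfies Definition \ref{defn:core}, including the uniform-index condition on the surgery domain --- this is straightforward because the flying saucer's cusp sphere is of constant index $j$, and any embedded $S^{q-1} \subset S^{n-1}$ admits a tubular neighborhood of the form demanded by $\sigma$; and (ii) the identification of the underlying smooth type of $\leg_a$, which reduces to the standard decomposition $S^n = S^{q-1} \times D^{n-q+1} \cup D^q \times S^{n-q}$. Neither step requires input beyond Theorem \ref{thm:q-surgery}, Theorem \ref{thm:cobord-les}, and Theorem \ref{thm:duality}.
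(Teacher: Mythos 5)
Your overall strategy --- perform a single gf-compatible $(n-a-1)$-surgery on an $n$-dimensional Legendrian sphere with $\Gamma(t)=t^n$ and read off the answer from the Cobordism Exact Sequence --- is the same as the paper's, and your homological bookkeeping is correct (the paper avoids your case analysis at $a=n-1$ by composing with the $0$-handle filling of the sphere, so that $GH^k(f_a)\cong H^{k+1}(L_a,\leg_a)$ with no cases; your route through the cobordism sequence plus Theorem~\ref{thm:duality}(2) is a valid alternative). The gap is in the geometric setup: the \emph{standard} flying saucer does not admit the attaching regions you claim. Condition (1) of Definition~\ref{defn:core} requires that over the core disk (the region $\|u\|^2-\|v\|^2<1$) the front be empty within the $z$-slab, with the two sheets opening up on the \emph{other} side of the cusp locus. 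For the standard saucer the two sheets lie over the inside of the equatorial cusp sphere $S^{n-1}$, so any core disk must lie in the unbounded outside region. For $a=0$ (i.e.\ $q=n$) this is impossible: the full equatorial $S^{n-1}$ bounds no compact $n$-disk in $\rr^n\setminus B^n$, so no $n$-attaching region exists and the two-component case of the lemma cannot be obtained this way. Your blanket assertion that ``any embedded $S^{q-1}\subset S^{n-1}$ admits a tubular neighborhood of the form demanded by $\sigma$'' only addresses the index condition, not this positional condition, and it is exactly the point the lemma's construction must engineer.

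The paper's fix is to replace the standard saucer by an isotopic one obtained by spinning the modified unknot front of Figure~\ref{fig:mfld-lemma}, which by construction carries an $(n-1)$-sphere of cusps bounding a horizontal disk $D^n$ lying in the complement of the front (with the sheets opening away from it); sub-disks $D^q\subset D^n$ then furnish $q$-attaching regions for every $q=1,\dots,n$ at once. Since this sphere is Legendrian isotopic to the standard saucer, it is still gf-fillable (Theorem~\ref{thm:q-surgery} for the $0$-handle followed by Proposition~\ref{prop:isotopy}), so the rest of your computation goes through verbatim. If you insert this isotopy step --- or otherwise exhibit, for each $q$, a cusp sphere together with a genuinely empty horizontal core disk --- your proof is complete.
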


\begin{figure}  
		\labellist
		\small
		\pinlabel $D^{n}$ [lt] at 155 52
		\endlabellist
  \centerline{\includegraphics{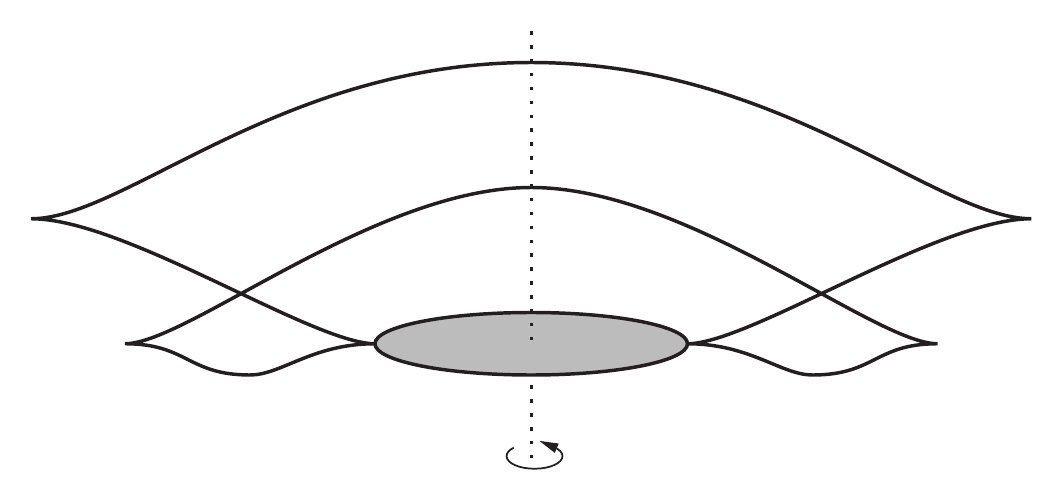}}
  \caption{Spinning this unknot results in an $n$-dimensional Legendrian submanifold isotopic to the standard flying saucer, but having a $S^{n-1}$ family of cusps that bounds a disk $D^n$ on the outside.}
  \label{fig:mfld-lemma}
  \end{figure}

\begin{proof} In this proof and in the proofs below, we will construct the Legendrian $\leg_a$ in a  $J^1\rr^n$ coordinate chart inside $J^1M$. 
Let $\leg_-$ denote the $n$-dimensional Legendrian sphere obtained from spinning the front of the Legendrian
unknot shown
in Figure~\ref{fig:mfld-lemma}.  Since Legendrian isotopy commutes with spinning, we know that $\leg_-$ is isotopic to the standard $n$-dimensional flying saucer.  
{This implies that $\leg_-$ is gf-fillable (by attaching a $0$-handle as in Theorem~\ref{thm:q-surgery} and by Proposition~\ref{prop:isotopy}).}
  By construction, $\leg_-$ contains an $(n-1)$-dimensional
sphere of cusps that bounds a horizontal disk $D^{n}$.  For $a = 0, 1, \dots, n-1$, perform an $(n-a-1)$-surgery on $\leg_-$, which, when combined with the Lagrangian filling of $\leg_-$ also yields a gf-filling $(L_a,F_a)$ of $(\leg_a, f_a)$.   For $a > 0$, the resulting Legendrian $\leg_a$ is connected. 

We now proceed to compute the generating family cohomology of the links $\leg_a$.  Since the Lagrangian $L_a$ was constructed by attaching an $(n-a)$-handle to a $0$-handle, its homology is supported in dimensions $0$ and $n-a$, both with dimension $1$.  Poincar\'e duality then implies that 
\[ H^k(L_a,\leg_a) = \begin{cases} \ff & k=a+1,n+1, \\ 0 & \text{otherwise.} \end{cases}\]
The Cobordism Exact Sequence implies that $GH^k(f_a) \simeq H^{k+1}(L_a,\leg_a)$, and the lemma follows.
\end{proof}

\begin{rem}
	The proof of Lemma~\ref{lem:mfld} shows that the Legendrians $\leg_a$ are all gf-fillable.
\end{rem}

Taking connect sums of the Legendrians constructed in Lemma~\ref{lem:mfld} and applying Corollary~\ref{cor:connect-sum} allows us to build up Legendrians with (almost) arbitrary manifold classes.

\begin{cor}[Manifold Class Building Block]\label{cor:mfld-block} For all nonnegative integers $q_1, q_2, \dots, q_{n-1}$,
there exists a connected,  $n$-dimensional Legendrian submanifold that has a tame generating family $f$ with 
\[\Gamma_f(t) = t^n + q_{n-1}t^{n-1} + \dots + q_1 t^1.\]
\end{cor}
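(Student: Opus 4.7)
The plan is to obtain the desired Legendrian as an iterated connect sum of the building blocks $\leg_a$ produced by Lemma~\ref{lem:mfld}, using Corollary~\ref{cor:connect-sum} to track the generating family polynomial at each stage.

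First I would dispose of the trivial case $q_1 = \cdots = q_{n-1} = 0$ by taking $\leg$ to be the standard $n$-dimensional flying saucer, whose tame generating family satisfies $\Gamma_f(t) = t^n$. For the main case, set $N = \sum_{a=1}^{n-1} q_a \geq 1$. For each $a \in \{1, \dots, n-1\}$, take $q_a$ disjoint horizontally separated copies of the connected Legendrian $\leg_a \subset J^1\rr^n$ with its tame generating family $f_a$ from Lemma~\ref{lem:mfld} (so $\Gamma_{f_a}(t) = t^n + t^a$). Following Proposition~3.19 of \cite{josh-lisa:obstr}, the disjoint union carries a tame generating family $f$ whose polynomial is the sum of the pieces, namely
\[
\Gamma_f(t) = \sum_{a=1}^{n-1} q_a\bigl(t^n + t^a\bigr) = N t^n + \sum_{a=1}^{n-1} q_a t^a.
\]

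Next, I would perform $N-1$ successive gf-compatible $0$-surgeries, each connecting two distinct components of the current Legendrian, to produce a connected Legendrian $\leg$ with a tame generating family $f$. Each such surgery is available because the hypothesis that the two pieces are horizontally well separated lets us place a $0$-attaching region as in Theorem~\ref{thm:q-surgery}, with cusps whose indices match (the outer cusps of the flying-saucer-like pieces). Applying Corollary~\ref{cor:connect-sum} at each step, the polynomial decreases by exactly $t^n$; after $N-1$ connect sums we obtain
\[
\Gamma_f(t) = \left(Nt^n + \sum_{a=1}^{n-1} q_a t^a\right) - (N-1)t^n = t^n + \sum_{a=1}^{n-1} q_a t^a,
\]
which is the desired polynomial.

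The only real obstacle is an accounting one: we must verify that the iterated application of Corollary~\ref{cor:connect-sum} is legitimate, i.e.\ that after each connect sum the resulting Legendrian still admits a tame generating family that can be arranged, together with a remaining component, in the horizontally separated configuration required to apply the corollary again. This is automatic: the surgery produced by Theorem~\ref{thm:q-surgery} is supported in an arbitrarily small neighborhood of the attaching region, so by performing the surgeries one pair at a time (say, with all components initially placed in a long horizontal line and surgeries carried out between adjacent pairs) the remaining components stay far from the surgery locus, and their tame generating families extend unchanged. Connectedness after the final surgery is immediate from the standard fact that each $0$-surgery joining two connected components reduces the component count by one.
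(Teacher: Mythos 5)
Your proposal is correct and matches the paper's (unwritten) argument exactly: the corollary is obtained by iterated connect sums of the Legendrians $\leg_a$ from Lemma~\ref{lem:mfld}, with the polynomial tracked via Corollary~\ref{cor:connect-sum}. Your pairwise bookkeeping, which keeps each surgery a connect sum of two connected pieces so that the $2t^n$ hypothesis of the $0$-surgery computation is satisfied at every stage, is the right way to make the iteration rigorous.
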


We are now ready to build up the ``duality classes'' recorded by the polynomials $p(t)$ and $t^{n-1}p(t^{-1})$.  We begin by constructing Hopf links with the desired classes.

\begin{lem}[Hopf Link Lemma] \label{lem:hopf} For $n \geq 2$ and $a \neq 0, n-1$,
there exists an $n$-dimensional Hopf link $\Lambda_a$ with
a tame generating family $f$ so that
$$\Gamma_f(t) = 2t^n + t^a + t^{n-1}t^{-a}.$$
\end{lem}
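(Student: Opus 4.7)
The plan is to construct $\Lambda_a$ by building a topologically Hopf-linked pair of Legendrian $n$-spheres in $J^1\rr^n$ with a carefully designed generating family. The natural decomposition of the target polynomial, $2t^n + t^a + t^{n-1-a}$, suggests assembling $\Lambda_a$ from building blocks: the two $t^n$ terms correspond to the fundamental classes of the two $n$-spheres, while the dual pair $t^a + t^{n-1-a}$ comes from two mixed Reeb chords between them (forced to pair up by Theorem~\ref{thm:duality}).

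I would proceed in two ways in parallel, choosing whichever is more tractable. First, via direct construction and spinning: start from a $1$-dimensional Hopf-link-like configuration in $J^1\rr$, equipped with a spinnable, linear-at-infinity generating family whose Poincar\'e polynomial has the correct form. Apply the spinning construction (Proposition~\ref{prop:front-spin}) iteratively to raise the dimension to $n$; each spin interacts with the front in a predictable way, so that the final polynomial can be arranged to be $2t^n + t^a + t^{n-1-a}$. Second, via cobordism: start from a configuration whose polynomial already matches, such as the disjoint union of $\leg_a$ and $\leg_{n-1-a}$ from Lemma~\ref{lem:mfld} when both indices lie in the admissible range $\{1, \ldots, n-1\}$, and use a gf-compatible cobordism (built from Theorem~\ref{thm:q-surgery} and the flexibility of Lemma~\ref{lem:emb-family}) to convert the unlink topology into a Hopf link. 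In cases where $n-1-a$ is negative or exceeds $n-1$, one must instead work with the spinning approach or with a stabilized variant of $\leg_a$.

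To verify the polynomial, compute $\Gamma_f(t)$ via the Cobordism Exact Sequence (Theorem~\ref{thm:cobord-les}) along the cobordism, or by direct Morse-theoretic analysis of the difference function of the constructed generating family. The Duality Theorem~\ref{thm:duality} provides both a constraint and a verification tool: the polynomial must be compatible with duality, so pinning down $q_n=2$ together with one dual pair $t^a + t^{n-1-a}$ automatically determines the entire answer. Compatibility of the target polynomial with duality is visible from the decomposition $p(t) = t^a$ and $t^{n-1}p(t^{-1}) = t^{n-1-a}$, together with $q_n = 2$ and $q_0 = \cdots = q_{n-1} = 0$.

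The principal obstacle is controlling the Reeb chord count precisely. A generic Hopf-linked configuration will produce many mixed Reeb chords whose gradings may not all match those required, and the topological change from unlink to Hopf link cannot be effected by Legendrian isotopy alone. Arranging exactly two mixed chords of the correct indices requires fine adjustment, which can be done via cancellation of short Reeb chord pairs using Proposition~\ref{prop:isotopy} combined with a local surgery (Theorem~\ref{thm:q-surgery}) that removes the unwanted classes. The duality constraint substantially narrows the possibilities but does not by itself complete the construction.
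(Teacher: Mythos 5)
Your proposal correctly identifies the spinning construction (Proposition~\ref{prop:front-spin}) as the way to raise dimension, and you rightly note that the duality constraint forces a $t^a + t^{n-1-a}$ pairing once $q_n=2$ is pinned down. However, there are two genuine gaps.

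First, your alternative route via cobordism is unworkable: a gf-compatible cobordism produced by Theorem~\ref{thm:q-surgery} attaches a handle to $\leg_-$, producing $\leg_+$ via embedded surgery along the boundary of the core disk. This operation cannot change the linking between two pre-existing components. Starting from a two-component unlink (as in the disjoint union of two pieces from Lemma~\ref{lem:mfld}) and reaching a Hopf link by handle attachment is not possible; you would also be changing the topology of $\leg$ itself, not just creating a cobordism from the unlink to the Hopf link. So the ``convert unlink topology into a Hopf link'' step fails.

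Second, and more fundamentally, your verification step is missing the key idea. You suggest canceling unwanted Reeb chords via isotopy and local surgery, but Legendrian isotopy preserves the generating family homology, and surgery would change the Legendrian. The paper instead exhibits \emph{two isotopic} $1$-dimensional Legendrian Hopf links whose fronts look different, equips one with a generating family by summing stabilized generating families for the two unknotted components (with carefully chosen fiber indices), spins both to dimension $n$, and uses the \emph{persistence} of generating families under isotopy to obtain two generating families $f_L$ and $f_R$ with the same homology. Each difference function has six positive critical points, but the index profiles from $f_L$ and $f_R$ are different: from $f_L$, the critical points of index $n+1+a$ and $n+a$ cannot contribute (no adjacent degree), while from $f_R$, those of index $a+1$ and $a$ cannot contribute. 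Combining both conclusions caps the total homology at four dimensions, and index/critical-value/duality arguments finish the exceptional cases $a = n-a$ and $a = n+1$. Without comparing two presentations of the \emph{same} isotopy class, you have no mechanism to exclude the two unwanted generators, and direct analysis of a single difference function with six critical points would require explicit computation of its Morse differential, which the paper avoids.
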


\begin{figure}
		\labellist
		\small
		\pinlabel $a$ [lt] at 91 95
		\pinlabel $a+1$ [lt] at 91 112
		\pinlabel $0$ [rt] at 85 36
		\pinlabel $-1$ [rt] at 85 19
		\pinlabel $a$ [rt] at 158 157
		\pinlabel $a+1$ [rt] at 158 174
		\pinlabel $0$ [rt] at 177 36
		\pinlabel $-1$ [rt] at 177 19

		\endlabellist
  \centerline{\includegraphics[width=4in]{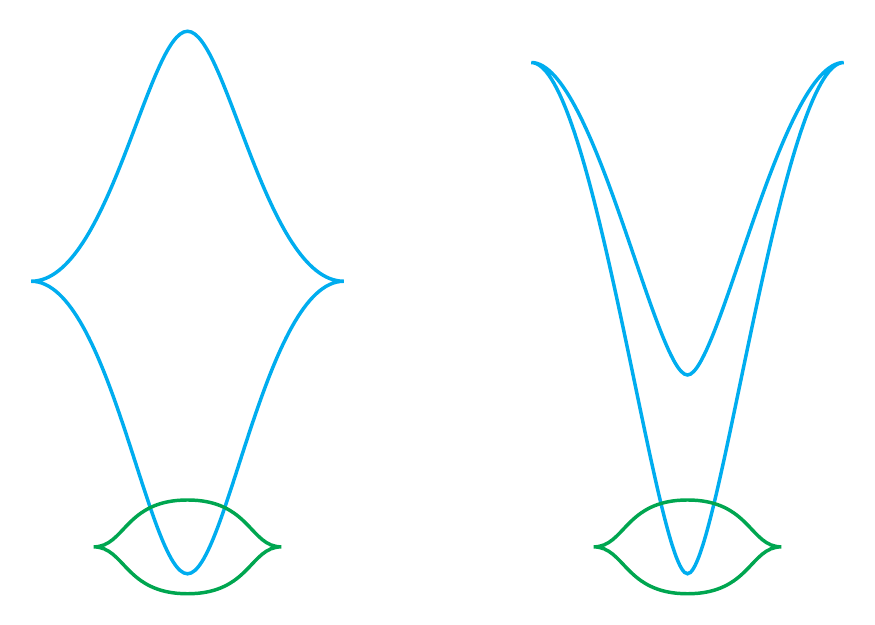}}
  \caption{Two Legendrian Hopf links used in the proof of Lemma~\ref{lem:hopf}.  Both have six Reeb chords along the central axis, with the length of the Reeb chord from the bottom of the upper component to the top of the lower component shorter than then Reeb chord between the bottoms of the two components.}
  \label{fig:hopf-lemma}
  \end{figure}

\begin{proof}  By the symmetry of these polynomials, it suffices to show it is possible
to realize the polynomial $2t^n + t^a + t^{n-1-a}$, for $a \geq \lfloor \frac{n-1}{2}  \rfloor$ and $a \neq n-1$.

We begin by considering the $1$-dimensional Legendrian Hopf link on the left side of Figure~\ref{fig:hopf-lemma}.  We claim that this link has a generating family with the indicated (relative) fiber indices.  Start with two linear-at-infinity generating families $f_0$ and $f_1$ for the lower and upper unknots, respectively, on the left side of Figure~\ref{fig:hopf-lemma}; suppose that they have the same fiber dimensions and that their fiber critical sets have the same fiber indices. Stabilize $f_1$ with a non-degenerate quadratic function of index $a+1$ and stabilize $f_0$ with a non-degenerate quadratic function of the same dimension, but of index $0$.  Finally, translate the domain of $f_1$ in the fiber so that its support is disjoint from that of $f_0$ and let $f_L$ be the sum of $f_0$ and $f_1$ in the sense of \cite[Definition 3.18]{josh-lisa:obstr}; the result $f_L$ is a generating family for the full Hopf link with the desired indices on the fiber critical sets.

Spinning the two  fronts in Figure~\ref{fig:hopf-lemma} yields isotopic $n$-dimensional Legendrian Hopf links $\leg_L$ and $\leg_R$; note that $\leg_L$ has a generating family  (again called $f_L$) by Proposition~\ref{prop:front-spin}.

Since $\leg_L$ and $\leg_R$ are isotopic and $\leg_L$ has a generating family, the persistence of generating families under isotopy implies that $\leg_R$ has a generating family $f_R$ that has the same generating family homology as $f_L$.  The difference functions for both $f_L$ and $f_R$ have six critical points with positive critical values.  The indices of these critical points, listed in order of decreasing critical value, are:
\begin{center}
\begin{tabular}{r||c|c|c|c|c|c}
	$\leg_L$ & $n+1+a$ & $n$ & $n+a$ & $n$ & $n-1-a$ & $a$ \\ \hline
	$\leg_R$ & $a+1$ & $n$ & $a$ & $n$ & $n-1-a$ & $a$
\end{tabular}
\end{center}

It follows that the critical points of $f_L$ with
indices $n+1+a$ and $n+a$ cannot contribute to the homology; similarly, for $f_R$, the critical points with
indices $a+1$ and $a$ cannot contribute to the homology,  and thus the total homology is at most $4$-dimensional.

Except in the cases  $a = n-a, n+1, n-1$, index arguments imply that  the generating family homology must be $4$-dimensional, i.e., that
\[\Gamma_f(t) = 2 t^n + t^a + t^{n-1-a},\]
as desired.

By assumption, $a \neq n-1$, so it remains to show that the generating family homology is $4$-dimensional when $a=n-a$ and when $a = n + 1$.   

When $a = n-a$, then the critical points in degrees $a$ and $n-1-a = a-1$  survive in homology since the critical value of the critical point of index $a-1$ is larger than that of the critical point of index $a$.  Thus, this case also yields the desired $4$-dimensional homology.
  
Finally, when $a = n+1$, we can apply a duality argument to show that the total homology is $4$-dimensional.  In this case, $n-1-a = -2$, and if the homology were not $4$-dimensional,
 then we would have $\Gamma_f(t) =  t^n + t^{-2}$. This is a contradiction to the  duality long exact sequence, as the part of the sequence
\[ GH^{n+1}(f) \to GH_{-2}(f) \to H^{n+1}(\leg) \]
would become
\[ 0 \to \ff \to 0.\] 
This completes the proof of the lemma.
 \end{proof}

\begin{rem} \label{rem:hopf-filling}
	As all of the Hopf links constructed in the lemma above are Legendrian isotopic to the Hopf link $\leg_0$ of Lemma~\ref{lem:mfld}, they are all fillable, though not necessarily gf-fillable.
\end{rem}

We next use the Hopf links constructed above to produce Legendrian spheres whose generating family homologies have a given pair of dual classes.

\begin{lem}[Sphere Lemma] \label{lem:sphere} For all integers $a$, there exists a 
 Legendrian $n$-sphere $\Lambda$ with a tame generating family $f$ so that
$$\Gamma_f(t) = t^n + t^a + t^{n-1}t^{-a}.$$ 
\end{lem}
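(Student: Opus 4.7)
My plan is to prove the Sphere Lemma by combining the Hopf Link Lemma with a gf-compatible $0$-surgery that joins the two components of the Hopf link into a single Legendrian sphere. By the symmetry of the target polynomial under $a\leftrightarrow n-1-a$, I may assume $a\geq \lfloor(n-1)/2\rfloor$. In the main range $a \neq n-1$, Lemma~\ref{lem:hopf} provides a Legendrian Hopf link $\Lambda_a$ with tame generating family $f$ satisfying $\Gamma_f(t) = 2t^n + t^a + t^{n-1-a}$, whose two components are Legendrian $n$-spheres (each isotopic to a flying saucer). After a preparatory Legendrian isotopy that brings two cusps of matching fiber index -- one from each component -- into a $1$-attaching region as in Definition~\ref{defn:core}, Theorem~\ref{thm:q-surgery} yields a gf-compatible Lagrangian cobordism from $(\Lambda_a, f)$ to a connected Legendrian $(\Lambda_+, f_+)$; topologically $\Lambda_+\cong S^n\# S^n\cong S^n$.

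The polynomial of $(\Lambda_a, f)$ has precisely the form required by the hypothesis of Lemma~\ref{lem:0-surg}: the manifold piece is $2t^n$ (so $q_n=2$ and all other $q_k$ vanish), and $t^a + t^{n-1-a} = p(t) + t^{n-1}p(t^{-1})$ with $p(t)=t^a$, which is permissible because $a\geq \lfloor(n-1)/2\rfloor$. Applying the $0$-surgery lemma then gives
$$\Gamma_{f_+}(t) \;=\; \Gamma_f(t) - t^n \;=\; t^n + t^a + t^{n-1-a},$$
which is the desired polynomial.

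The remaining case $a=n-1$ (equivalently $a=0$), requiring the polynomial $t^n+t^{n-1}+1$, is not directly covered by Lemma~\ref{lem:hopf}, and I expect this to be the main obstacle. Here I would carry out a parallel spinning-and-stabilization construction: start from a nested $1$-dimensional front of the shape in Figure~\ref{fig:hopf-lemma} with fiber indices chosen so that the resulting spun generating family on the Hopf link, followed by the gf-compatible $0$-surgery, annihilates exactly one of the two degree-$n$ classes while leaving classes in degrees $n-1$ and $0$. The index-cancellation argument that forced the exclusion of $a=n-1$ in Lemma~\ref{lem:hopf} concerns only the Hopf link itself; because the subsequent $0$-surgery is designed to consume precisely one top-degree generator, the restriction can be absorbed and the desired polynomial $t^n+t^{n-1}+1$ can be realized by an $n$-sphere with a tame generating family, completing the proof for all integers $a$.
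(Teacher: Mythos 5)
Your treatment of the main case is essentially the paper's: for $a \geq \lfloor(n-1)/2\rfloor$ with $a \neq n-1$, take the Hopf link $\Lambda_a$ from Lemma~\ref{lem:hopf}, perform a gf-compatible $0$-surgery joining the two components, and read off the answer from the $0$-Surgery Lemma~\ref{lem:0-surg}. One point you gloss over is how to arrange a valid gf-attaching region: the paper does not simply invoke ``a preparatory Legendrian isotopy'' but introduces an explicit higher-dimensional Reidemeister~I move, realized at the level of generating families by $f_t(x,\eta) = \pm(\|\eta\|^4 + t\|\eta\|^2) + x\cdot\eta$, and applies it $a+1$ times to produce two cusps with matching fiber index (see Figure~\ref{fig:sphere-lemma}). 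Without something like this you have not established that the needed gf-attaching region exists, and that is part of the content of the proof rather than a formality.

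The genuine gap is the excluded case $a = n-1$ (equivalently $a=0$), which you correctly flag as the obstacle but do not resolve. Your sketch --- construct a special Hopf link whose indices make the subsequent $0$-surgery ``annihilate exactly one degree-$n$ class'' --- just reproduces the difficulty that forced the exclusion in Lemma~\ref{lem:hopf}: for $a = n-1$ the index and duality arguments there do not pin down the Hopf link's generating family cohomology, so you have no $\Gamma_{f_-}$ to feed into the $0$-Surgery Lemma, and ``the restriction can be absorbed'' is an assertion, not an argument. The paper sidesteps the Hopf Link Lemma entirely here: it starts instead from the two-component link $\leg_0$ of Lemma~\ref{lem:mfld}, which has $\Gamma_{f_0}(t) = t^n + t^0$ (so $q_n = 1$, meaning the $0$-Surgery Lemma does \emph{not} apply), performs the gf-compatible $0$-surgery, and then analyzes the resulting Cobordism Exact Sequence
\[ 0 \to GH^{n-1}(f_0) \to \ff \to \ff \to GH^n(f_0) \to 0 \]
directly, invoking Theorem~\ref{thm:duality}(2) (non-vanishing of the fundamental class) to force $\dim GH^n = \dim GH^{n-1} = 1$, giving $t^n + t^{n-1} + t^0$ as required. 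You should replace your sketch with this argument, or an equivalent one that actually nails down the ambiguity in the exact sequence.
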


\begin{proof}
By symmetry, it suffices to restrict to the case where $a \geq \lfloor \frac{n-1}2 \rfloor$. 

\begin{figure}
	\labellist
		\small
		\pinlabel $\leg_0$ [l] at 98 103
	\endlabellist
  \centerline{\includegraphics{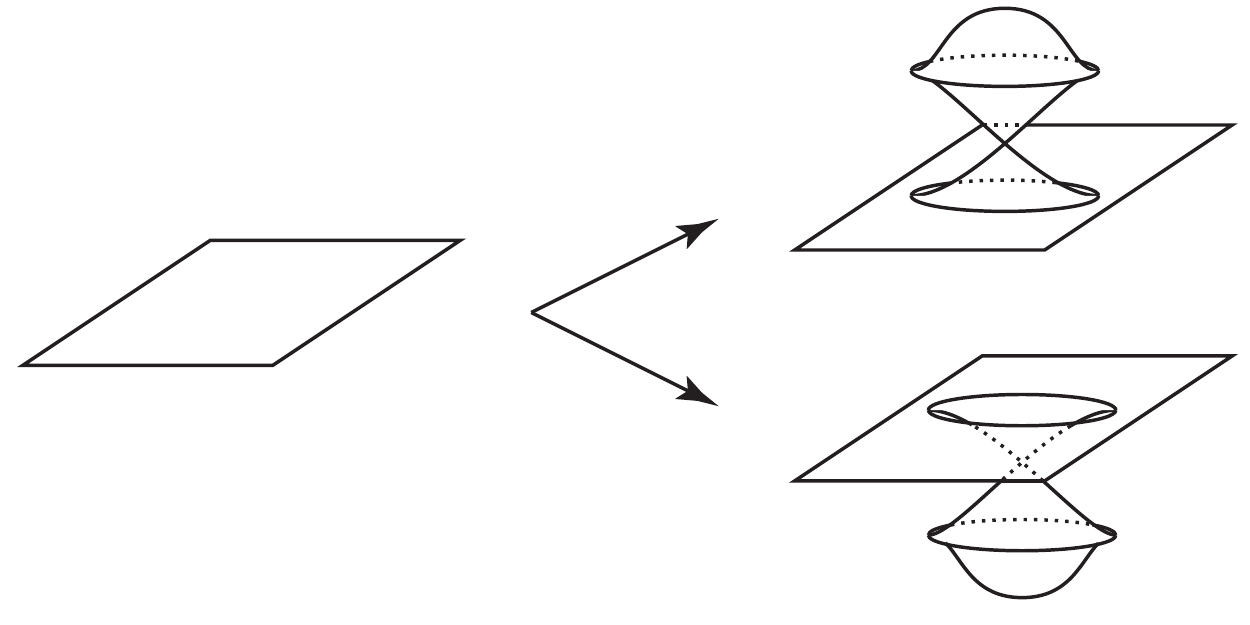}}
  \caption{The higher dimensional first Reidemeister moves.}
  \label{fig:R1move}
\end{figure}

First, we generalize the first Reidemeister move to higher dimensions as follows: for any Legendrian submanifold $\leg \subset J^1 \rr^n$, the Legendrian submanifold $\leg'$ obtained by replacing a graph-like portion of the front projection of $\leg$ with one of the fronts depicted in Figure~\ref{fig:R1move} is Legendrian isotopic to $\leg$.  To see why, let $\leg_0 \subset \leg$ be an open subset obtained as the $1$-jet of a function and
contained in a Darboux chart of $J^1\rr^n$. We claim that the 1-parameter family of generating families $f : [-1 \times 1] \times \rr^n \times
\rr^n \to \mathbb R$ defined by
$$
f_t(x,\eta) = \pm(\|\eta\|^4 + t \|\eta\|^2) + x \cdot \eta,
$$
where $x = (x_1, \ldots, x_n)$ and $\eta = (\eta_1, \ldots, \eta_n)$,
describes a Legendrian isotopy between $\leg_0$ (for $t = +1$) and 
the Legendrian submanifolds whose fronts are depicted on Figure~\ref{fig:R1move} (for $t= - 1$).
The sign $\pm$ can be chosen in order to obtained the front on the bottom or top 
of the figure.

\begin{figure}
		\labellist
		\small
		\pinlabel $a$ [r] at 37 128
		\pinlabel $a+1$ [r] at 37 148
		\pinlabel $0$ [r] at 275 45
		\pinlabel $-1$ [r] at 275 25
		\pinlabel $a+1$ [l] at 280 105
		\endlabellist
  \centerline{\includegraphics{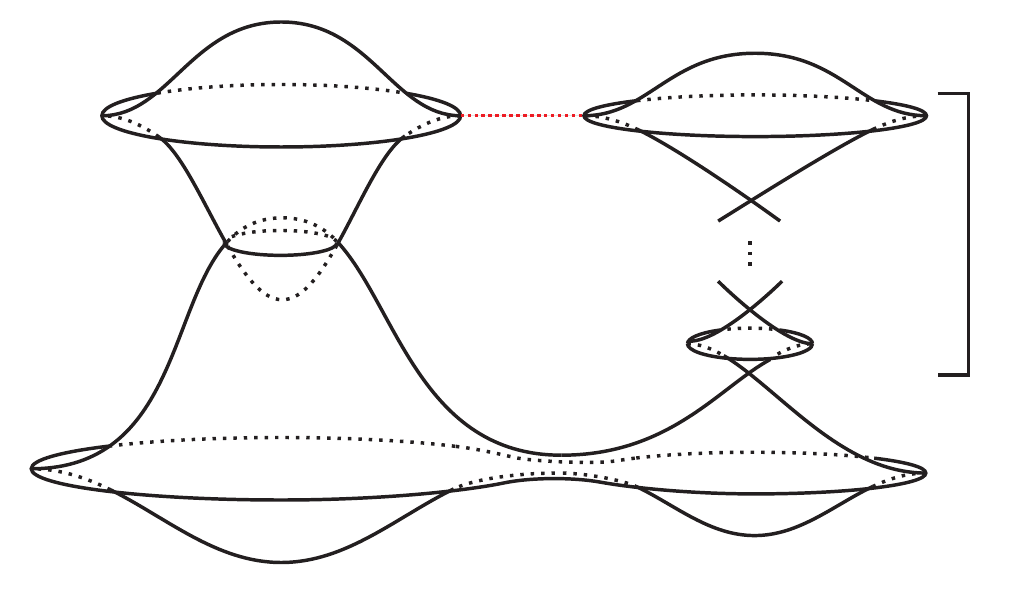}}
  \caption{A scheme to construct a Legendrian sphere with $\Gamma_{f_a}(t) = t^n + t^a + t^{n-1}t^{-a}$ with $a \geq \lfloor \frac{n-1}{2} \rfloor$.
  }
  \label{fig:sphere-lemma}
  \end{figure}

With this technique in hand, take a Hopf link constructed in Lemma~\ref{lem:hopf} and apply $a+1$  Reidemeister moves; in the case $a=0$, use the link from Lemma~\ref{lem:mfld}.  The result has a generating family $g_a$ with the fiber indices indicated in Figure~\ref{fig:sphere-lemma}. Applying a $0$-surgery along a horizontal line  indicated in Figure~\ref{fig:sphere-lemma}  produces a gf-compatible Lagrangian cobordism from the Hopf link to a sphere $\leg_a$ with generating family $f_a$. 

To finish the proof of the lemma, we verify that the generating family cohomology of $(\leg_a, f_a)$ is given by $\Gamma_{f_a} = t^n + t^a + t^{n-1}t^{-a}$. If $(a, n-1-a) \neq (n-1, 0)$, then the $0$-Surgery Lemma \ref{lem:0-surg} implies the desired result.  If, on the other hand, we are in the case $(a, n-1-a) = (n-1, 0)$, the Cobordism Exact Sequence implies that $\dim GH^0(f_0) = 1$ and that $\dim GH^k(f_a) = 0$ for all other $k \neq 0,n-1,n$.  The remaining part of the Cobordism Exact Sequence is:
\[ 0 \to GH^{n-1}(f_0) \to \ff \to \ff \to GH^n(f_0).
\]
Thus,  $GH^{n-1}(f_0)$ and $GH^n(f_0)$ either simultaneously vanish or have dimension $1$.  Theorem~\ref{thm:duality}(2) implies that the latter is, indeed, the case, which completes the proof.
\end{proof}

\begin{rem} \label{rem:sphere-filling}
	Continuing Remark~\ref{rem:hopf-filling}, we note that the spheres constructed in the lemma above are all fillable since they arise from attaching Lagrangian handles to fillable Legendrians.
\end{rem}

From this lemma, we obtain spherical building blocks for duality classes:

 \begin{cor}[Sphere Building Block]\label{cor:sphere-block} For any $n \geq 2$ and any polynomial
$p(t) = \sum_{i \in \mathbb Z, i \geq \lfloor \frac{n-1}{2} \rfloor} p_i t^i$, 
there exists an $n$-dimensional Legendrian sphere with tame generating family $f$ with polynomial
$$\Gamma_f(t) = t^n + p(t) + t^{n-1}p(t^{-1}). $$
\end{cor}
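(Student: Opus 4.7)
The plan is to realize the desired polynomial as the generating family polynomial of an iterated gf-compatible connect sum of the building-block spheres produced by the Sphere Lemma~\ref{lem:sphere}. The key observation is that each sphere from Lemma~\ref{lem:sphere} with parameter $a \geq \lfloor (n-1)/2 \rfloor$ contributes exactly a term of the form $t^a + t^{n-1-a}$ beyond the unavoidable $t^n$, and these are precisely the ``dual pairs'' that make up $p(t) + t^{n-1} p(t^{-1})$. Connect-summing the right multiset of such spheres, while tracking the effect on generating family cohomology via the Connected Sums Corollary~\ref{cor:connect-sum}, will produce the desired polynomial.

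Concretely, for each $i$ with $i \geq \lfloor (n-1)/2 \rfloor$ and $p_i > 0$, apply Lemma~\ref{lem:sphere} with $a = i$ to obtain $p_i$ Legendrian $n$-spheres, each with generating family polynomial $t^n + t^i + t^{n-1-i}$. Place all these spheres (a total of $N := \sum_i p_i$ of them) in disjoint regions of a single $J^1\rr^n$ coordinate chart inside $J^1M$, and then perform $N-1$ iterated gf-compatible $0$-surgeries, each joining two components along matching top-index cusps in the manner of Corollary~\ref{cor:connect-sum}. Since the connect sum of $n$-spheres is an $n$-sphere, the result $\leg$ is a Legendrian $n$-sphere.

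Applying Corollary~\ref{cor:connect-sum} inductively, the resulting generating family $f$ satisfies
\[
\Gamma_f(t) \;=\; \sum_i p_i \bigl(t^n + t^i + t^{n-1-i}\bigr) \;-\; (N-1)\, t^n \;=\; t^n + p(t) + t^{n-1} p(t^{-1}),
\]
which is the required polynomial. In the degenerate case $p(t) \equiv 0$ we simply take $\leg$ to be the standard $n$-dimensional flying saucer, whose natural quadratic generating family has $\Gamma_f(t) = t^n$.

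The main subtlety is checking that each successive gf-compatible $0$-surgery can actually be arranged. After each connect sum the new sphere still carries a top-degree class $t^n$ in its generating family cohomology (by the $0$-surgery Lemma~\ref{lem:0-surg}), so the corresponding top-index cusps remain available as attaching data for the next step; a preliminary Legendrian isotopy (Proposition~\ref{prop:isotopy}) brings these cusps into the position required by the attaching region data of Definition~\ref{defn:core}. With this arranged, the construction stays inside the gf-compatible framework throughout and yields the claimed Legendrian sphere.
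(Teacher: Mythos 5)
Your proposal is correct and follows the paper's own route: the paper proves this corollary by iterated gf-compatible connect sums of the spheres from the Sphere Lemma~\ref{lem:sphere}, with the polynomial tracked via Corollary~\ref{cor:connect-sum}, exactly as you describe (the paper simply says it is "analogous" to how Corollary~\ref{cor:mfld-block} follows from Lemma~\ref{lem:mfld}). Your arithmetic $\sum_i p_i(t^n+t^i+t^{n-1-i}) - (N-1)t^n = t^n + p(t) + t^{n-1}p(t^{-1})$ and your handling of the degenerate case $p \equiv 0$ are both fine.
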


The corollary follows from Lemma~\ref{lem:sphere} in an analogous fashion to how
Corollary~\ref{cor:mfld-block} followed from Lemma~\ref{lem:mfld} and Corollary~\ref{cor:connect-sum}.

Finally, we have all the building blocks to answer the non-classical generating family geography question.

\begin{proof}[Proof of Theorem~\ref{thm:geography}]  This proof 
  follows from taking a connect sum of a 
manifold constructed in Corollary~\ref{cor:mfld-block} and a sphere constructed in Corollary~\ref{cor:sphere-block}; once again, the generating family polynomial is computed using Corollary~\ref{cor:connect-sum}.
\end{proof}

% **********
\subsection{Classical Fillable Geography}
\label{ssec:classical-fillable-geo}

The constructions built up in the previous section, especially the Sphere Building Block Corollary~\ref{cor:sphere-block}, also  suffice for the proof of Theorem~\ref{thm:classical-fillable}. We will also need the fact that the Thurston-Bennequin number of a Legendrian submanifold $\leg \subset J^1\rr^n$ with generating family $f$ may be computed by taking the Euler characteristic of the generating family homology:
\begin{equation} \label{eqn:tb}
	tb(\leg) = (-1)^{\frac{(n-2)(n-1)}{2}} \Gamma_f(-1).
\end{equation}
This fact was proven in \cite{ees:high-d-geometry} for linearized Legendrian contact homology, and can be translated to generating family homology using \cite[Proposition 3.2]{josh-lisa:obstr}.

Fix an odd $n\geq 2$ and any odd integer $2k+1$.  Suppose that the quantity $\frac{(n-2)(n-1)}{2}$ is even; the proof in the other case is entirely similar. If $2k+1 > 0$, it is straightforward to compute that the Legendrian sphere $\leg$ with generating family polynomial
\[ \Gamma_f(t) = t^n +  (k+1)t^{n-1} + (k+1)t^0\] 
has $tb(\leg) = 2k+1$. If $2k+1 < 0$, use the Legendrian sphere with generating family polynomial
\[ \Gamma_g(t) = (k+1)t^n + kt^{-1}.\]
By Remark~\ref{rem:sphere-filling}, all of the spheres used above are fillable, though only the first set is gf-fillable. This completes the proof of Theorem~\ref{thm:classical-fillable}.

\appendix
\section{Proof of the Duality Theorem}
\label{sec:duality-proof}

We conclude this paper by proving Theorem~\ref{thm:duality}, repeating some of the proof of the original statement so as to properly set notation for the strengthening. The key ingredient in the proof of the duality exact sequence is Lemma 7.1 from \cite{josh-lisa:obstr}, which we repeat here for the reader's convenience:

\begin{lem}[\cite{josh-lisa:obstr}] \label{lem:+-iso} Assume $f: M^n \times \rr^N \to \rr$ is linear-at-infinity; let $\delta: M^n \times \rr^{2N} \to \rr$ be its
associated difference function.  For sufficiently large $\omega$ and 
for all $a \in \rr$, there is an isomorphism
$$\beta: H^j\left(\delta^\omega, \delta^{-a} \right) \stackrel{\sim}{\to} H_{2N+n-j}\left(\delta^{a}, \delta^{-\omega} \right).$$
\end{lem}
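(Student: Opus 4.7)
The plan is to combine an involution on the domain that reverses $\delta$ with classical Poincar\'e--Lefschetz duality on a compact manifold with boundary; this is a standard kind of trick for Lagrangian duality arguments, and the linear-at-infinity hypothesis is what makes the compactness step work.

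First, consider the involution $\tau : M \times \rr^N \times \rr^N \to M \times \rr^N \times \rr^N$ given by $\tau(x,\e,\te) = (x,\te,\e)$. Since $\delta(x,\e,\te) = f(x,\te) - f(x,\e)$, we have $\delta \circ \tau = -\delta$. Hence $\tau$ restricts to a homeomorphism of pairs $(\delta^\omega, \delta^{-a}) \xrightarrow{\cong} (\{\delta \geq -\omega\},\ \{\delta \geq a\})$, inducing
\[
\tau^* \colon H^j(\delta^\omega, \delta^{-a}) \xrightarrow{\ \cong\ } H^j\bigl(\{\delta \geq -\omega\},\ \{\delta \geq a\}\bigr).
\]

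Second, I would use the linear-at-infinity hypothesis to reduce to a compact model. Since $f(x,\e) = A(\e)$ outside a compact subset of $M \times \rr^N$ for some nonzero linear $A$, the difference $\delta$ agrees with $(x,\e,\te) \mapsto A(\te) - A(\e)$ outside a compact subset of $M \times \rr^{2N}$. For $\omega$ sufficiently large, $\pm\omega$ are regular values of $\delta$, all critical values of $\delta$ lie in $(-\omega,\omega)$, and the level sets $\delta^{-1}(\pm\omega)$ are affine hyperplanes at infinity. After perturbing $a$ to a nearby regular value (which does not change the cohomology), excision across $\{\delta < -\omega\}$ gives
\[
H^j\bigl(\{\delta \geq -\omega\},\ \{\delta \geq a\}\bigr) \cong H^j\bigl(\delta^{-1}[-\omega,a],\ \delta^{-1}(a)\bigr),
\]
and symmetrically $H_{n+2N-j}(\delta^{-1}[-\omega,a],\ \delta^{-1}(-\omega)) \cong H_{n+2N-j}(\delta^a, \delta^{-\omega})$.

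Third, I would apply Poincar\'e--Lefschetz duality to the manifold $W := \delta^{-1}[-\omega,a]$, which under the above reductions is a compact oriented $(n+2N)$-dimensional manifold with boundary decomposing as $\partial W = \delta^{-1}(-\omega) \sqcup \delta^{-1}(a)$. Classical duality gives
\[
H^j\bigl(W,\ \delta^{-1}(a)\bigr) \cong H_{n+2N-j}\bigl(W,\ \delta^{-1}(-\omega)\bigr),
\]
and composing this with the identifications above defines the desired $\beta$.

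The main obstacle will be justifying step two honestly: $M \times \rr^{2N}$ is noncompact (certainly when $M = \rr^n$), so the pair $(\delta^{-1}[-\omega,a], \delta^{-1}(a))$ is a priori not a compact manifold-with-boundary pair to which classical Poincar\'e--Lefschetz duality applies. The remedy is to truncate outside a large ball in the $\rr^{2N}$ factor; in that region $\delta$ is (pulled back from) a nonzero linear function, so the truncated ends are diffeomorphic to products of the form $[c,c'] \times (\text{affine hyperplane})$ and contribute trivially to the relative (co)homology groups of interest. Once this cutoff argument is made rigorous, the rest of the proof is bookkeeping. A minor side issue is orientability: if $M$ is nonorientable the argument goes through verbatim over $\zz/2$, and in general one uses the orientation twisted coefficient system, which is enough for the downstream applications of the lemma.
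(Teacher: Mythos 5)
The statement you are asked to prove is quoted in the paper as Lemma 7.1 of \cite{josh-lisa:obstr}; this paper does not reprove it, so there is no ``paper's own proof'' here to compare against line by line. That said, your outline --- swap $\eta \leftrightarrow \tilde\eta$ to convert $\delta$ to $-\delta$, pass to the pair $(\delta^{-1}[-\omega,a],\ \delta^{-1}(a))$ by excision/collar arguments, and apply Poincar\'e--Lefschetz duality with the two boundary faces playing opposite roles --- is the standard route to this statement and is, I believe, in the spirit of the cited source. The minor slip of phrasing (``excision across $\{\delta<-\omega\}$'' where you mean carving off $\{\delta > a\}$) is cosmetic.

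The place where your argument has a real gap is exactly the step you flag but then dismiss as ``bookkeeping.'' Two issues. First, your description of $\delta$ at infinity is inaccurate: $f(x,\eta)=A(\eta)$ only holds outside a compact set in $M\times\rr^N$, so $\delta(x,\eta,\tilde\eta)=A(\tilde\eta)-A(\eta)$ only on the region where \emph{both} $(x,\eta)$ and $(x,\tilde\eta)$ lie outside that compact set. On the ``mixed'' ends (say $|\tilde\eta|$ large, $\eta$ bounded) one instead has $\delta = A(\tilde\eta) - f(x,\eta)$, which is not pulled back from a linear function on $\rr^{2N}$. Second --- and this is the substantive point --- for a noncompact $W=\delta^{-1}[-\omega,a]$, classical Lefschetz duality gives $H^j_c(W,\partial_+W)\cong H_{m-j}(W,\partial_-W)$, not $H^j(W,\partial_+W)\cong H_{m-j}(W,\partial_-W)$. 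So you must prove that $H^j_c(W,\partial_+W)\to H^j(W,\partial_+W)$ is an isomorphism, i.e.\ that the ends of the pair $(W,\partial_+W)$ are cohomologically trivial. This is true, but the reason is not that the ends are products: it is that on every end of $W$ the fiber derivative $\partial_{\tilde\eta}\delta = A$ (or $\partial_{\eta}\delta = -A$) is a fixed nonzero covector, so $\delta$ has no critical points there and one can build a gradient-like vector field, constant in the large-$|\eta|$ or large-$|\tilde\eta|$ direction, whose flow retracts the end of $W$ onto the end of $\partial_+W$ (and, symmetrically, onto the end of $\partial_-W$). Writing this out --- in particular choosing a truncation region compatible with such a flow, since the naive ball truncation in $\rr^{2N}$ is not preserved by the flow --- is where the actual content of the lemma lives, and the linear-at-infinity hypothesis is used precisely to produce this nonvanishing covector at infinity, not merely to ``make the compactness step work'' in the vague sense you invoke.
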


\begin{proof}[Proof of Theorem~\ref{thm:duality}(1)] The duality exact sequence is modeled on the cohomology long exact sequence of the triple $(\delta^\omega, \delta^{\epsilon},\delta^{-\epsilon})$ for sufficiently large $\omega$ and sufficiently small $\epsilon>0$; we have already encountered this triple in the long exact sequence (\ref{eq:gh-les}).  In fact, using this long exact sequence and the duality isomorphism from Lemma~\ref{lem:+-iso}, we obtain the following commutative diagram:
\begin{equation*} \label{eq:PAL}
  \xymatrix@C=12pt{
    \ar[r] & H^{k+N}(\delta^\omega, \delta^\epsilon) \ar[r]^{p_k} \ar[d]^\alpha_\simeq
    & H^{k+N}(\delta^\omega, \delta^{-\epsilon}) \ar[r]^{s_k} \ar[d]^\beta_\simeq
    & H^{k+N}(\delta^\epsilon, \delta^{-\epsilon}) \ar[d]^\gamma_\simeq \ar[r]^-{\delta_k}
    & \\
    \ar[r] & H_{n-k+N}(\delta^{-\epsilon}, \delta^{-\omega}) \ar[r] 
    & H_{n-k+N}(\delta^\epsilon, \delta^{-\omega}) \ar[r]^-{s'_{n-k}}
    & H_{n-k+N}(\delta^{\epsilon}, \delta^{-\epsilon}) \ar[r] &
  }
\end{equation*}

We now identify several terms in the  diagram above. By definition, $H^{k+N}(\delta^\omega, \delta^\epsilon) = GH^{k-1}(f)$; by the Thom isomorphism, $H^{k+N}(\delta^\epsilon, \delta^{-\epsilon}) \simeq H^k(\leg)$.  To identify the middle term on the bottom line, observe that the term $H_{n-k+N}(\delta^\epsilon, \delta^{-\omega})$ is part of the homology long exact sequence of $(\delta^\omega,\delta^\epsilon, \delta^{-\omega})$. Owing to the fact that $H_*(\delta^\omega, \delta^{-\omega})$ vanishes, the connecting homomorphism $\partial_{n-k}: H_{n-k+N+1}(\delta^\omega,\delta^\epsilon) \to H_{n-k+N}(\delta^\epsilon, \delta^{-\omega})$ is an isomorphism for all $k$.  Since, by definition,  $GH_{n-k}(f) =H_{n-k+N+1}(\delta^\omega,\delta^\epsilon)$, we see that the middle bottom term is isomorphic, via $\partial_{n-k}$, to  $GH_{n-k}(f)$.

The map $\rho_k: GH^{k-1}(f) \to GH_{n-k}(f)$ is defined as $\rho_k = \partial^{-1}_{n-k} \circ \beta \circ p_k$, and 
the map $\sigma_k: GH_{n-k}(f) \to H^k(\leg)$ is defined as  $\sigma_k = s_k \circ \beta^{-1} \circ \partial_{n-k}$.   
Since field coefficients are used, the adjoint  $\delta^*_{n-k}$ of $\delta_{n-k}$  fits into the following commutative diagram, obtained from the long exact sequences of the triples $(\delta^\omega, \delta^{\epsilon}, \delta^{-\epsilon})$ and 
$(\delta^\omega, \delta^{\epsilon}, \delta^{-\omega})$:
\begin{equation*} \label{eq:PAL2}
  \xymatrix@C=12pt{
    \ar[r] & H_{n-k+N+1}(\delta^\omega, \delta^\epsilon) \ar[r]^-{\delta_{n-k}^*}
    & H_{n-k+N}(\delta^\epsilon, \delta^{-\epsilon}) \ar[r] 
    & \\
    \ar[r] & H_{n-k+N+1}(\delta^{\omega}, \delta^{\epsilon}) \ar[u]_{\operatorname{id}} \ar[r]^-{\partial_{n-k}} 
    & H_{n-k+N}(\delta^\epsilon, \delta^{-\omega}) \ar[r] \ar[u]_{i_* = s'_{n-k}}
    & 
  }
\end{equation*}
Thus $\delta_{n-k}^* = \left(s'_{n-k}\right) \circ \partial_{n-k} = \left(\gamma \circ s_k \circ \beta^{-1}\right) \circ \partial_{n-k} = \gamma \circ \sigma_k$, as claimed.
\end{proof}

In order to prove the second part of Theorem~\ref{thm:duality}, namely the existence of the fundamental class, we need to use a slight extension of the standard Morse-Bott machinery described in \cite{austin-braam}.  We begin by recalling some of the basic objects in Morse-Bott theory to set notation.  Fix a generating family $f$ for $\leg$ and consider its difference function $\delta$ and a Morse-Bott-Smale metric on the domain $M \times \rr^{2N}$ of $\delta$. 
Given nondegenerate critical points $q$ and $r$ of $\delta$, we denote by $\mathcal{M}(q;r)$ the moduli space of gradient trajectories of $\delta$ from $r$ to $q$ modulo reparametrization by a constant shift. It is a smooth manifold of dimension $\ind_\delta q - \ind_\delta r -1$.
Let $\Sigma \subset M \times \Delta$ be the critical submanifold of $\delta$ in its zero level set. Similarly, we denote by $\mathcal{M}(q; \Sigma)$ the moduli space of gradient trajectories from $\Sigma$ to $q$ modulo reparametrization by a constant shift. It is a smooth manifold of dimension $\ind_\delta q - N - 1$. It is equipped with a smooth evaluation map $\textrm{ev} :  \mathcal{M}(q; \Sigma) \to \Sigma$ at $-\infty$. 

The key construction for the proof of Theorem~\ref{thm:duality}(2) is a new moduli space of gradient flow lines that begin on the ``diagonal'' $M \times \Delta \subset M \times \rr^N \times \rr^N$. Given a nondegenerate critical point $q$ of $\delta$, we define $\mathcal{M}(q; M \times \Delta)$ to be the space of gradient flow lines $\gamma: [0,\infty) \to M \times \rr^{2N}$ of $\delta$ with $\gamma(0) \in M \times \Delta$ and $\lim_{t \to \infty} \gamma(t) = q$.  Finally, we define $\mathcal{M}(\Sigma; M \times \Delta)$ to be the space of gradient flow lines $\gamma: [0,\infty) \to M \times \rr^{2N}$ of $\delta$ with $\gamma(0) \in M \times \Delta$ and $\lim_{t \to \infty} \gamma(t) \in \Sigma$.  The moduli space $M(\Sigma; M \times \Delta)$ consists entirely of constant trajectories, so  we may identify the moduli space with $\Sigma$ and its evaluation map at $+\infty$ with the identity.

\begin{lem} \label{lem:diag-compactness}
The moduli space $\mathcal{M}(q; M \times \Delta)$ is a smooth manifold of dimension $\ind_\delta q - N$.
If it has dimension $1$, it is compactified by the $0$-dimensional moduli spaces 
$$
\mathcal{M}(\Sigma; M \times \Delta)  \times_\Sigma \mathcal{M}(q; \Sigma)
\quad \textrm{and} \quad  
\bigcup_{\ind_\delta r = N} \mathcal{M}(r; M \times \Delta) \times \mathcal{M}(q; r)
$$
so that each element in these moduli spaces is in the closure of a single end of $\mathcal{M}(q; M \times \Delta)$.
\end{lem}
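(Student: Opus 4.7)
The plan is to proceed in three steps: establish the smooth manifold structure and dimension of $\mathcal{M}(q; M \times \Delta)$ by transversality; analyze the possible limits of a sequence in the $1$-dimensional case; and use gluing to match each broken configuration with a unique end of the moduli space.

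First, the starting point $\gamma(0)$ determines $\gamma$ entirely, so $\mathcal{M}(q; M \times \Delta)$ is identified with $W^s(q) \cap (M \times \Delta)$, where $W^s(q)$ denotes the stable manifold of $q$ for the $+\nabla \delta$ flow (along which $\delta$ increases, so that $\dim W^s(q) = \ind_\delta q$, consistent with the dimension formulas already stated for $\mathcal{M}(q;r)$ and $\mathcal{M}(q;\Sigma)$). Since the metric is assumed Morse-Bott-Smale, a further generic perturbation ensures that $W^s(q)$ meets $M \times \Delta$ transversely; the intersection is then a smooth manifold of dimension
\[
\ind_\delta q + (n+N) - (n+2N) = \ind_\delta q - N.
\]

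Now assume $\ind_\delta q = N+1$ and consider a sequence $\gamma_n \in \mathcal{M}(q; M \times \Delta)$ without convergent subsequence. Because $f$ is linear at infinity, the trajectories $\gamma_n$ stay in a fixed compact region, so by standard Morse-theoretic compactness the only failure of convergence is degeneration to a broken trajectory. Two types can occur. In the first, the trajectory breaks at an interior nondegenerate critical point $r$ of positive critical value, producing an element of $\mathcal{M}(r; M \times \Delta) \times \mathcal{M}(q;r)$; the identity
\[
(\ind_\delta r - N) + (\ind_\delta q - \ind_\delta r - 1) = \ind_\delta q - N - 1 = 0,
\]
together with nonnegativity of each factor, forces $\ind_\delta r = N$. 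In the second, the starting point $\gamma_n(0)$ tends to a point of $\Sigma$; after reparametrizing to capture the extra time spent near $\Sigma$, the limit is a broken configuration in $\mathcal{M}(\Sigma; M \times \Delta) \times_\Sigma \mathcal{M}(q; \Sigma)$. To see that $\mathcal{M}(\Sigma; M \times \Delta) = \Sigma$, observe that $\delta \equiv 0$ on both $M \times \Delta$ and $\Sigma$, so any gradient trajectory from the former to the latter remains in $\delta^{-1}(0)$ and must therefore be a constant trajectory at a point of $\Sigma \subset M \times \Delta$; the evaluation at $+\infty$ is the identity $\Sigma \to \Sigma$.

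Finally, each broken configuration is the limit of exactly one end of $\mathcal{M}(q; M \times \Delta)$ by a gluing argument. For an interior-breaking pair at a critical point $r$ of index $N$, standard Morse-Smale gluing produces a $[0,\varepsilon)$-family of nearby unbroken trajectories approaching it, and the constraint $\gamma(0) \in M \times \Delta$ persists along the family because it is a transverse condition on the leftmost piece. For a boundary degeneration, the gluing is across the Morse-Bott critical submanifold: one inserts a short gradient segment from $M \times \Delta$ into a neighborhood of the given point of $\Sigma$ and concatenates with the given flow line in $\mathcal{M}(q;\Sigma)$. The main obstacle is this second gluing, since $\Sigma$ lies \emph{inside} the boundary condition $M \times \Delta$ rather than transverse to it; one must balance the exponential decay normal to $\Sigma$ against the transversality of $W^s(q) \cap (M \times \Delta)$ to confirm that the construction produces a single $[0,\varepsilon)$-family of glued trajectories (rather than two or none). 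This is carried out by an Austin-Braam style implicit function theorem adapted to the initial-value constraint, yielding the desired bijection between broken configurations and ends.
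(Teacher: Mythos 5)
Your proposal is correct and follows essentially the same route as the paper: identify $\mathcal{M}(q;M\times\Delta)$ with the transverse intersection $W^s(q)\cap(M\times\Delta)$ to get the dimension, enumerate the broken limits by an index count that forces either a single interior break at $\ind_\delta r=N$ or a degeneration onto the constant trajectories $\mathcal{M}(\Sigma;M\times\Delta)\cong\Sigma$ followed by an element of $\mathcal{M}(q;\Sigma)$, and then invoke standard Morse gluing together with Austin--Braam Morse--Bott gluing for the bijection with ends. The only cosmetic difference is that the paper compactifies the full trajectories $\gamma:\rr\to M\times\rr^{2N}$ and uses the symmetry of $\delta$ under exchange of the two $\rr^N$ factors to locate the piece meeting $M\times\Delta$, whereas you work directly with the half-trajectories on $[0,\infty)$, which sidesteps that remark.
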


\begin{proof}
The moduli space $\mathcal{M}(q; M \times \Delta)$ is the intersection of the stable manifold of $q$ with 
$M \times \Delta$. After a small perturbation of the metric on $M \times \rr^{2N}$, this intersection is transverse, so that
$\mathcal{M}(q; M \times \Delta)$ is a smooth manifold of dimension $\ind_\delta q + (n +N) - (n+ 2N) = \ind_\delta q - N$.

The compactification of the space of gradient trajectories $\gamma: \rr \to M \times \rr^N \times \rr^N$ having their restriction to $[0, \infty)$ in $\mathcal{M}(q; M \times \Delta)$
consists of broken gradient trajectories~\cite[Lemma~3.3]{austin-braam}. Any such broken gradient trajectory contains a gradient 
trajectory passing through $M \times \Delta$ at $t=0$, so that its restriction to $[0, \infty)$ is either an element of
$\mathcal{M}(\Sigma; M \times \Delta)$ or an element of $\mathcal{M}(r; M \times \Delta)$ for some nondegenerate 
critical point $r$ of $\delta$. Note that the part of the broken trajectory passing through $M \times \Delta$ cannot be a piece of a trajectory from a critical point with negative critical value to $q$ since the image of the whole broken trajectory is symmetric under the exchange of the two $\rr^N$ factors.  The compactification of  $\mathcal{M}(q; M \times \Delta)$ therefore consists of broken trajectories with such an element 
followed by an ordinary broken gradient trajectory from $\Sigma$ or $r$ to $q$.

Let us first assume that the gradient trajectory passing through $M \times \Delta$ at $t=0$ is an element of 
$\mathcal{M}(\Sigma; M \times \Delta)$. 
The next gradient trajectory in the broken gradient trajectory runs from $\Sigma$ to a critical point $s$ of $\delta$ with
$\delta(s)> 0$, an element of the moduli space $\mathcal{M}(s;\Sigma)$ with dimension $\ind_\delta s - N - 1 \ge 0$. 
If $\ind_\delta q = N + 1$, this forces $s = q$ so that the broken trajectory is an element of
$\mathcal{M}(\Sigma; M \times \Delta)  \times_\Sigma \mathcal{M}(q; \Sigma)$. Conversely, the Morse-Bott gluing 
theorem~\cite[Theorem~A.11]{austin-braam} shows that any element of this moduli space  is the limit of a unique 
end of $\mathcal{M}(q; M \times \Delta)$.

We now assume that the gradient trajectory passing through $M \times \Delta$ at $t=0$ is an element of 
$\mathcal{M}(r; M \times \Delta)$.
If $\ind_\delta q = N + 1$, there can only be one more gradient trajectory in the above broken gradient trajectory: 
an element of $\mathcal{M}(q;r)$ with $\ind_\delta r = N$. Conversely, the usual gluing theorem in Morse theory shows 
that each element of $\mathcal{M}(r; M \times \Delta) \times \mathcal{M}(q; r)$ is the limit of a unique end of 
$\mathcal{M}(q; M \times \Delta)$.
\end{proof}

With this technical lemma in hand, we are ready for the proof of the existence of a fundamental class.

\begin{proof}[Proof of Theorem~\ref{thm:duality}(2)]
Let $\leg_1, \ldots, \leg_k$ be the connected components of $\leg$ and let $p_i$ be the generator of $H^0(\leg_i)$ for $i = 1, \ldots, k$. We claim that $p = \sum_{i=1}^k p_i \in \ker \delta_0$. In view of the property (1) proved above, this means that $\delta_n$ has rank at least $1$. 

Note that $\delta_0(p)$ results from the count of gradient trajectories with cascades~\cite{bh:morse-bott-cascades} from a minimum 
of an auxiliary Morse function on $\Sigma$ to a critical point $q$ of the difference function with $\ind q = N+1$. Such cascade gradient trajectories are in bijective correspondence with elements of $\mathcal{M}(q; \Sigma)$.

In order to prove our claim, define the cochain $c \in C^N(\delta^\omega, \delta^\epsilon)$ by:
\[c = \sum_{\textrm{ind }r = N} \# \mathcal{M}(r; M \times \Delta) \, r.\]
Denoting the Morse codifferential of $c$ by $dc$, the claim above follows from the assertion that $\delta_0(p) + dc = 0$.  Lemma~\ref{lem:diag-compactness} implies that this relation encodes a count of the boundary of the one-dimensional moduli space $\bigcup_{\ind q = N+1} \mathcal{M}(q; M \times \Delta)$: $\delta_0(p)$ accounts for the first type of broken flow line in the lemma,
while $dc$ accounts for the second type.  
\end{proof}

% *****

\bibliographystyle{amsplain} \bibliography{main}

\end{document}